\documentclass[11pt,a4paper]{article}
\usepackage{amsmath, amsfonts, amssymb,amsthm}
\usepackage{a4wide}
\usepackage{parskip}
\usepackage{enumitem}
\usepackage{xcolor}
\usepackage{pstricks}
\usepackage{hyperref}
\usepackage{latexsym}
\usepackage{cite}
\def\QED{\hfill {$\square$}\goodbreak \medskip}

\usepackage[left=1in, right=1in,top=1in,bottom=1in]{geometry}
\allowdisplaybreaks

\newcommand{\be} {\begin{equation}}
	\newcommand{\ee} {\end{equation}}
\newcommand{\bea} {\begin{eqnarray}}
	\newcommand{\eea} {\end{eqnarray}}
\newcommand{\Bea} {\begin{eqnarray*}}
	\newcommand{\Eea} {\end{eqnarray*}}

\newcommand{\ba} {\beta}
\newcommand{\de} {\delta}
\newcommand{\ga} {\gamma}
\newcommand{\Ga} {\Gamma}

\newcommand{\La} {\Lambda}

\newcommand{\e} {\epsilon}
\def\R{{\mathbb R}}
\def\N{{\mathcal N}}

\def\R{{\mathbb R}}
\def\Z{{\mathbb Z}}
\def\N{{\mathbb N}}
\def\F{{\mathcal F}}
\def\H{{\mathbb {H}^N}}

\def\nah{\nabla_{\mathbb{H}}}
\def\I{I_{\lambda}}
\def\N{{N_\lambda}}

\numberwithin{equation}{section}

\newtheorem{theorem}{Theorem}[section]
\newtheorem{rem}{Remark}[section]
\newtheorem{lemma}{Lemma}[section]
\newtheorem{co}{Corollary}[section]

\makeatother
\newtheorem{prop}{Proposition}[section]

\numberwithin{equation}{section}
\def\proof{\noindent{\textbf{Proof. }}}

\begin{document}
	\setlength{\abovedisplayskip}{3pt}
	\setlength{\belowdisplayskip}{3pt}
	\date{}
	\title{ Critical Quasilinear Schr\"odinger Equations on the Heisenberg Group: Existence and Nonexistence }
    \author{ {\bf Ankit Mishra$\,$\footnote{e-mail: {\tt ankitmishra.rs.mat23@iitbhu.ac.in, ankitmishra18295@gmail.com}}, Divya Goel$\,$\footnote{e-mail: {\tt divya.mat@iitbhu.ac.in}}} \\ Department of Mathematical Sciences, Indian Institute of Technology (BHU),\\ Varanasi 221005, India.}
	
\maketitle
\begin{abstract}
		We study the quasilinear Schr\"odinger equation
		\begin{align*}
			-\Delta_{\mathbb{H}} u +V(\xi)u-\Delta_{\mathbb{H}} (\left|u\right|^{2\alpha})\left|u\right|^{2\alpha-2} u= \lambda \left|u\right|^{q-2}u + \left|u\right|^{p-2}u \quad \text{ in } \mathbb{H}^N,
		\end{align*}
		where $\Delta_{\mathbb{H}}$ is the Kohn Laplacian on the Heisenberg group $\mathbb{H}^N$, $4\alpha<q<p \leq 2\alpha Q^*$,  $\alpha>\frac12$, and $2\alpha Q^{*}$ is the critical exponent, $Q=2N+2$ being the homogeneous dimension and $Q^{*}=\frac{2Q}{Q-2}$. For $p=2\alpha Q^{*}$ and $\lambda>0$, we obtain a nontrivial solution, assuming that the potential is bounded below by a positive constant and is either asymptotically constant from above or invariant under a discrete subgroup of $\mathbb{H}^N$. In the opposite direction, we prove a Poho\v{z}aev identity for $\Delta_{\mathbb{H}}$ and combine it with the Nehari identity, obtaining a family of identities from which the quasilinear energy disappears exactly at the exponent $2\alpha Q^{*}$. This yields a nonexistence theorem under a monotonicity condition on the potential with respect to anisotropic dilations and shows that no nontrivial solution exists for $\lambda \leq 0$; the sign of the subcritical perturbation determines solvability. Along the way, we show that every weak solution is bounded and decays exponentially in the Kor\'anyi gauge.

		\medskip

		\noindent \textbf{Keywords:}

		\noindent \textit{Quasilinear Schr\"odinger equation, Heisenberg group, Kohn--Laplacian, Poho\v{z}aev identity, Non-existence, Variational method.}

		\medskip

		\noindent \textbf{MSC 2020:} 35R03, 35H20, 53C17, 35Q55.
	\end{abstract}

	\section{Introduction}

	Quasilinear Schr\"odinger equations couple the Laplacian to nonlinear functions of the solution and of its gradient. These types of equations arise in the description of superfluid films \cite{kurihara1981large}, in plasma physics, and in nonlinear optics, where the wave function is subject to both a nonlinear potential and nonlinear dispersion. A stationary equation of this type reads
	\begin{equation}\label{1.1}
		-\Delta u + V(x)u - \kappa\,\Delta\!\left(\left|u\right|^{2\alpha}\right)\left|u\right|^{2\alpha-2}u = h(u)\quad\text{in } \mathbb{R}^{m}, \ m \ge 3,
	\end{equation}
	and its solutions produce standing waves $\psi(t,x)=e^{-iEt}u(x)$, $E \in \R$, of the corresponding evolution equation.

	When $\kappa=0$, the problem is semilinear and has been studied under a wide range of assumptions on $V$ and $h$; see \cite{medeiros2008nonhomogeneous, giacomoni2005multiplicity, lam2012existence}. For $\kappa \neq 0$, the term $\Delta(\left|u\right|^{2\alpha})\left|u\right|^{2\alpha-2}u$ is neither convex nor of lower order, and outside dimension one there is no function space on which the associated energy is both finite and of class $C^1$ \cite{poppenberg2002existence}. Direct variational arguments are therefore unavailable, and the literature proceeds by perturbation, by constrained minimisation, or by a nonlinear change of variable.

The first variational existence result for  $\alpha=1$ is by Poppenberg, Schmitt and Wang \cite{poppenberg2002existence}, who minimized under a constraint in dimension one and for radial potentials in higher dimensions; their solution carries an undetermined Lagrange multiplier in front of the nonlinearity. Colin and Jeanjean \cite{colin2004solutions} removed the multiplier by a change of variable that turns \eqref{1.1} into a semilinear equation on $H^1(\R^{m})$, to which the results of Berestycki and Lions \cite{etde_5937557} apply for $m=1$ and $m \ge 3$, and those of Berestycki, Gallou\" et and Kavian \cite{berestycki1983equations} for $m=2$. The subcritical regime is treated in \cite{floer1986nonspreading, rabinowitz1992class, strauss1977existence}, and
    and do \'O, Miyagaki, and Soares \cite{miyagaki2010soliton} obtained positive classical solutions for the critical nonlinearity
$h(u)=\lambda\left|u\right|^{q-2}u+\left|u\right|^{2\cdot 2^{*}-2}u$ with $\lambda>0$.
    
For the quasilinear problem the critical exponent is $2\cdot 2^*$ rather than $2^*$. Liu, Wang, and Wang \cite{liu2004solutions} identified $2\cdot 2^{*}$, rather than $2^{*}$, as the threshold for \eqref{1.1} for$\alpha=1$.  Using the variational identity of Pucci and Serrin \cite{pucci1986general}, authors proved that no positive solution exists in $H^1(\R^{m})$ with $u^2\left|\nabla u\right|^2 \in L^1(\R^{m})$ when $p \ge 2\cdot2^{*}$. This nonexistence result shows that $2\cdot 2^*$ is a threshold for \eqref{1.1} and not a restriction of a particular method. Theorem \ref{thm:nonex} establishes
the corresponding result on $\mathbb{H}^N$.

	For general $\alpha>\frac12$, the first variational result is due to Liu and Wang \cite{liu2003soliton}, who studied
	\begin{equation}\label{1.2}
		-\Delta u + V(x)u - \Delta\!\left(\left|u\right|^{2\alpha}\right)\left|u\right|^{2\alpha-2}u = \lambda \left|u\right|^{p-2}u \quad \text{in } \mathbb{R}^{m},
	\end{equation}
	for $2<p<2\alpha 2^{*}$, again by a Lagrange multiplier argument with $\lambda$ undetermined. Adachi and Watanabe \cite{adachi2012uniqueness} settled the uniqueness of positive solutions of \eqref{1.2} for constant $V$, by reducing to a semilinear problem and invoking classical uniqueness results, while Alves, Carri\~ao, and Miyagaki \cite{alves2007soliton} treated a one-dimensional problem for a more general operator and produced two solutions of a perturbed equation. Chen obtained multiplicity results \cite{chen2015multiple}, and Wu \cite{wu2014multiple} produced a positive solution, a negative solution, and a sequence of high-energy solutions for $\frac12<\alpha\le1$ via a dual variational method. A unified treatment of the class $-\Delta u - u\,l^{\prime}(u^2)\Delta l(u^2)=h(u)$, which contains \eqref{1.2} as the case $l(s)=s^{\alpha}$, was given by Shen and Wang \cite{shen2013soliton}.

	In dimension two, the  nonlinearity for the
semilinear equation is $\exp(\beta|u|^2)$,  whereas in the presence of the term $\Delta(u^2)u$ the correct threshold is $\exp(\beta \left|u\right|^{4})$ \cite{miyagaki2007soliton}, and Trudinger--Moser inequalities adapted to the transformed functional are required; see \cite{miyagaki2007soliton, moameni2007class, do2009semi} and the references therein.

    All the work described so far is set in $\R^{m}$. We are concerned with the same questions on the Heisenberg group $\H$, the simplest non-commutative, sub-Riemannian manifold and the model case for analysis on stratified Lie groups. Two features make it a natural setting for critical growth problems. First, the Kohn Laplacian $\Delta_{\mathbb{H}}$ admits a Sobolev-type inequality, due to Folland and Stein \cite{folland1974estimates}, whose critical exponent is $Q^{*}=2Q/(Q-2)$, where $Q=2N+2$ is the homogeneous dimension of $\H$. Second, the extremals of that inequality are known explicitly. The loss of compactness at the exponent $Q^{*}$ is therefore of the same nature as in the Brezis--Nirenberg problem \cite{brezis1983positive}, and semilinear equations with this growth on $\H$ have been studied by several authors. Garofalo and Lanconelli \cite{garofalo1992existence} proved the first existence and nonexistence results, and Citti \cite{citti1995semilinear} treated the critical Dirichlet problem on bounded domains.  Later, Lanconelli and Uguzzoni \cite{lanconelli2000nonexistence} and Loiudice \cite{loiudice2007semilinear} obtained Nonexistence in unbounded domains.

	We study
	\begin{equation}\tag{$P_{\alpha}$}\label{eq:P}
		-\Delta_{\mathbb{H}} u + V(\xi)u
		- \Delta_{\mathbb{H}}\!\left(\left|u\right|^{2\alpha}\right)\left|u\right|^{2\alpha-2}u
		= \lambda \left|u\right|^{q-2}u + \left|u\right|^{p-2}u,
		\qquad \xi \in \H,
	\end{equation}
	with $\lambda>0$, $\alpha>\frac12$, $V:\H\to\R$ positive, and
	\begin{align*}
		4\alpha  < q < p \le 2\alpha Q^{*}.
	\end{align*}
	The exponent $2\alpha Q^{*}$ is critical for \eqref{eq:P} in the same sense that $2\cdot2^{*}$ is critical for \eqref{1.1}. The potential is assumed to satisfy
	\begin{description}
		\item[$(V_1)$] $V$ is continuous and there is $V_0>0$ with $V(\xi) \geq V_0$ for all $\xi \in \H$,
	\end{description}
	together with one of
	\begin{description}
		\item[$(V_2)$] there is $V_\infty$ such that $\lim_{\left|\xi\right|_{\mathbb{H}} \to \infty} V(\xi)= V_\infty$, with $V(\xi) \leq V_\infty$ for all $\xi$ and $V \not\equiv V_\infty$;
		\item[$(V_2^\prime)$] $V(\gamma \circ \xi)=V(\xi)$ for every $\xi \in \H$ and every $\gamma$ in the discrete subgroup $\Ga_{\mathbb{Z}}=\Z^{N}\times\Z^{N}\times\Z$ of $\H$.
	\end{description}

	Condition $(V_1)$ gives only a lower bound. Under $(V_2)$ we have $V \leq V_\infty$, and under $(V_2^\prime)$ the potential is continuous and periodic, hence bounded. In both cases $V$ is bounded, and we write
	\begin{align*}
		V_M := \sup_{\xi \in \H} V(\xi) < \infty.
	\end{align*}
	The existence theorem also requires the exponent $q$ to lie above the threshold
	\begin{equation}{\label{eq:qstarintro}}
		q_* := \begin{cases} 2\alpha Q^{*}+2-4\alpha, & \tfrac12<\alpha<1,\\[2pt] 2\alpha Q^{*}-2\alpha, & \alpha \geq 1.\end{cases}
	\end{equation}
	 For details, see Remark \ref{rem:qstar}.

	\begin{theorem}{\label{theorem:T}}
		Let $\max\{4\alpha,q_*\}<q<2\alpha Q^{*}$ and $p=2\alpha Q^{*}$. Assume $(V_1)$ together with $(V_2)$ or $(V_2^\prime)$. Then \eqref{eq:P} has a nontrivial solution.
	\end{theorem}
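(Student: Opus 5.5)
We will use the dual change of variables of Colin--Jeanjean/Shen--Wang type, adapted to general $\alpha$. Let $f$ be the odd solution of
\[
f'(t)=\bigl(1+2\alpha\,|f(t)|^{2(2\alpha-1)}\bigr)^{-1/2},\qquad f(0)=0,\ t\ge0 .
\]
Setting $u=f(v)$ reduces \eqref{eq:P} to a semilinear equation for $v$ in the Folland--Stein space $E=\{v\in S^1(\H):\int_{\H}V v^2<\infty\}$. The associated functional is
\[
J(v)=\tfrac12\int_{\H}\bigl(|\nah v|^2+V f(v)^2\bigr)-\tfrac{\lambda}{q}\int_{\H}|f(v)|^q-\tfrac1p\int_{\H}|f(v)|^p ,
\]
and it is $C^1$ on $E$. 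We will need the standard properties of $f$: $|f(t)|\le|t|$; $f(t)/t\to1$ as $t\to0$; $|f(t)|\le C|t|^{1/(2\alpha)}$; and $f(t)/t^{1/(2\alpha)}\to(2/\alpha)^{1/(4\alpha)}$-type constant $c_\alpha$ as $t\to\infty$. We will also need $tf'(t)\le f(t)$, together with the inequality $\tfrac{1}{2\alpha}f(t)\le tf'(t)$. Through these bounds the critical term $|f(v)|^{2\alpha Q^*}$ behaves like $|v|^{Q^*}$ at infinity, so the problem becomes a Brezis--Nirenberg type problem on $\H$ at the Folland--Stein exponent. We will check that critical points of $J$ give weak solutions $u=f(v)$ of \eqref{eq:P}.

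\textbf{Step 1: geometry and Palais--Smale bound.} Since $q>4\alpha$, $J$ has mountain pass geometry, and we obtain a Cerami sequence at level $c>0$. For boundedness we take the test function $f(v)/f'(v)$ and compute $J(v_n)-\frac{1}{q}J'(v_n)[f(v_n)/f'(v_n)]$. The condition $q>4\alpha$ ensures the gradient and potential terms stay coercive; the key identity here is $(f/f')'=1+\frac{2\alpha(2\alpha-1)f^{4\alpha-2}}{1+2\alpha f^{4\alpha-2}}\le 2\alpha$ for $\alpha\ge\frac12$.

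\textbf{Step 2: compactness threshold.} Let $S$ be the Folland--Stein constant. For a bounded (PS)$_c$ sequence, concentration-compactness (Lions' lemma on $\H$ via Kor\'anyi balls) should show that if $c<c^*:=\frac{1}{Q}\,c_\alpha^{-(Q-2)\cdot(\cdots)}S^{Q/2}$, then either vanishing fails or a bubble forms. Here $c^*$ is the explicit constant obtained by replacing $|f(v)|^{2\alpha Q^*}$ with its asymptotic $c_\alpha^{2\alpha Q^*}|v|^{Q^*}$. Non-vanishing yields translates $\gamma_n\circ\xi$, and we split into the two hypotheses:
\begin{itemize}
\item Under $(V_2')$ we take $\gamma_n\in\Ga_{\mathbb Z}$. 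Translation invariance of $J$ then gives a nontrivial weak limit, which is a critical point.
\item Under $(V_2)$ we compare with the limit functional $J_\infty$ built from $V_\infty$. Since $V\le V_\infty$, $V\not\equiv V_\infty$, we get $c<c_\infty$. A splitting (Brezis--Lieb) argument then rules out loss of mass to infinity, so the weak limit is nontrivial.
\end{itemize}

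\textbf{Step 3, the main obstacle: the energy estimate $c<c^*$.} We will test $J$ along $t\mapsto t\,\varphi U_\varepsilon$, where $U_\varepsilon$ are the Jerison--Lee extremals $\varepsilon^{(2-Q)/2}\bigl((1+|z|^2/\varepsilon^2)^2+t^2/\varepsilon^4\bigr)^{-(Q-2)/4}$ and $\varphi$ is a cutoff. Standard estimates give $\|\nah U_\varepsilon\|^2=S^{Q/2}+O(\varepsilon^{Q-2})$. The difficulty is twofold:
\begin{itemize}
\item The error from replacing $f(v)$ by $c_\alpha v^{1/(2\alpha)}$ near the peak, where $v\sim\varepsilon^{-(Q-2)/2}$, is of order $\int f^{2\alpha Q^*-2\alpha}$-type terms.
\item The gain from the $\lambda|f|^q$ term has size $\varepsilon^{\,Q-\frac{q}{2\alpha}\frac{Q-2}{2}}$.
\end{itemize}
We need this gain to dominate the error, together with the $O(\varepsilon^{Q-2})$ and potential ($\int V f(v)^2$) contributions. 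Comparing exponents is exactly where the threshold $q>q_*$ should enter: the two cases $\alpha<1$ and $\alpha\ge1$ come from whether the correction in the asymptotic expansion of $f$ is of relative order $t^{-(2\alpha-1)/\alpha}$ or governed by the $f^2$ term. We will first carry out the expansion of $f(t)^{2\alpha Q^*}-c_\alpha^{2\alpha Q^*}t^{Q^*}$ and bound the relevant integrals via Remark~\ref{rem:qstar}. Then we maximize in $t$ and conclude $\sup_t J(t\varphi U_\varepsilon)<c^*$ (and $<c_\infty$ as needed) for $\varepsilon$ small. This yields the nontrivial solution $u=f(v)$.
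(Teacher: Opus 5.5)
Most of your outline matches the paper:
\begin{itemize}
\item the change of variables (your $f$ is the paper's $g$);
\item mountain pass geometry for $q>4\alpha$, and boundedness of Cerami sequences via the test function $g(v_n)/g'(v_n)$;
\item non-vanishing below the threshold, which you should compute explicitly: the limit constant is $(2\alpha)^{1/(4\alpha)}$ and $c^*=S^{Q/2}/(2\alpha Q)$;
\item the bubble test with truncated Folland--Stein extremals, where $q_*$ comes from comparing the gain $\mu^{Q-\frac{q(Q-2)}{4\alpha}}$ with the error $\Theta(s)\asymp s^{Q^*-1}\bigl(\sqrt{2\alpha}\,s-g(s)^{2\alpha}\bigr)$;
\item lattice translations under $(V_2')$.
\end{itemize}

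The gap is in the case $(V_2)$. From $V\le V_\infty$ and $V\not\equiv V_\infty$ you only get $c\le c_\infty$. If $c_\infty\ge c^*$, Step 3 gives $c<c_\infty$ for free. Otherwise, upgrading to $c<c_\infty$ requires $c_\infty$ to be attained. You need a nontrivial critical point $w$ of $J_\infty$ with $J_\infty(w)=c_\infty$, lying on an admissible path $\sigma$ along which $J_\infty$ peaks exactly at $w$; then $c\le\max J(\sigma)<\max J_\infty(\sigma)=c_\infty$.

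Producing $w$ is itself a compactness problem at the critical exponent. It takes non-vanishing, translation, a Fatou argument for $J_\infty(w)\le c_\infty$, and a path argument for $J_\infty(w)\ge c_\infty$. None of this appears in your proposal. Your parenthetical ``$<c_\infty$ as needed'' in Step 3 does not help: a concentrating bubble has energy tending to $c^*$, so it certifies nothing below $c_\infty$. The ``Brezis--Lieb splitting'' is also left unspecified. In the dual variable it would have to split $\int|g(v_n)|^{2\alpha Q^*}$ and $\int Vg^2(v_n)$ while also excluding concentration.

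The paper avoids both the attainment question and any splitting lemma by a direct contradiction.
\begin{itemize}
\item Assume the weak limit of the Cerami sequence is $0$. Since $v_n\to0$ in $L^2_{loc}$ and $V\to V_\infty$, $\{v_n\}$ is also a Cerami sequence for $I_{\lambda,\infty}$ at the same level $c$.
\item Non-vanishing (from $c<S^{Q/2}/(2\alpha Q)$) plus translation give a nontrivial critical point $z$ of $I_{\lambda,\infty}$.
\item Fatou applied to the nonnegative integrand of $I_{\lambda,\infty}(z_n)-\frac12I'_{\lambda,\infty}(z_n)z_n$ gives $I_{\lambda,\infty}(z)\le c$.
\item A path through $z$ on which $I_{\lambda,\infty}$ peaks at $z$ then yields $c<I_{\lambda,\infty}(z)\le c$.
\end{itemize}
The paper builds this path from the dilations $z(\delta_{1/t}\xi)$ and the Poho\v{z}aev identity, which is why the decay estimates of Section 3 are needed. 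The ray $s\mapsto sz$ together with the fibering lemma of Section 4 would also serve.

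You should either adopt this argument or prove attainment of $c_\infty$ as described above. As written, existence under $(V_2)$ is not established.
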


	For the second result, we impose a condition on the derivative of $V$ along the dilation orbits. Writing
	\begin{align*}
		Z= \sum_{i=1}^{N}\left(x_i \partial_{x_i} + y_i \partial_{y_i}\right) + 2t\,\partial_t
	\end{align*}
	for the generator of $\{\delta_\theta\}_{\theta>0}$, we assume
	\begin{description}
		\item[$(V_3)$] $V \in C^1(\H)$ and there is $\kappa \leq Q-\frac{Q-2}{2\alpha}$ with $ ZV(\xi) \geq -\kappa\,V(\xi)$ for all $\xi \in \H$.
	\end{description}
	The special case $\kappa=0$, that is $ZV \geq 0$, in which $V$ is nondecreasing along every dilation orbit, is the exact analogue of the hypothesis $\nabla V(x)\cdot x \geq 0$ of \cite{liu2004solutions}. 
    The value $Q-\frac{Q-2}{2\alpha}$ is the largest $\kappa$ for which the coefficient of $\int_{\mathbb{H}^N} V u^2\,d\xi$ in (7.2) is nonpositive; see the proof of Theorem 1.2.

	The nonexistence theorem is stated for a general nonlinearity subject to the growth conditions $(f_1)$ and $(f_2)$ of Section 3, which are what make the regularity theory of that section available.

	\begin{theorem}{\label{thm:nonex}}
		Let $\alpha>\frac12$ and let $V$ satisfy $(V_1)$ and $(V_3)$. Let $f \in C(\R)$ be odd, with primitive $F$, and assume $(f_1)$, $(f_2)$ together with
		\begin{align*}
			f(s)s \geq 2\alpha Q^{*}F(s), \quad \text{ for all } s \in \R.
		\end{align*}
		Then the equation
		\begin{equation}\tag{$P_f$}\label{eq:Pf}
			-\Delta_{\mathbb{H}}u+V(\xi)u-\Delta_{\mathbb{H}}\!\left(\left|u\right|^{2\alpha}\right)\left|u\right|^{2\alpha-2}u=f(u), \quad \xi \in \H,
		\end{equation}
		has no nontrivial weak solution $u=g(v)$ with $v \in S^2_1(\H)$.
	\end{theorem}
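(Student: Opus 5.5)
Suppose $u=g(v)$, $v\in S^2_1(\H)$, is a nontrivial weak solution of \eqref{eq:Pf}. We combine a Nehari identity with a Pohožaev identity adapted to the anisotropic dilations $\delta_\theta$, and show that the combination forces $u\equiv0$.

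\textbf{Step 1: regularity and decay.} Using the regularity theory of Section 3, which is available under $(f_1)$, $(f_2)$, we first show that $u$ is bounded and decays exponentially in the Korányi gauge. Together with hypoelliptic regularity, this justifies testing with $Zu$, where $Z$ generates $\delta_\theta$. The boundary terms on Korányi balls $B_R$ then vanish as $R\to\infty$.

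\textbf{Step 2: the two identities.} Set
\begin{align*}
A=\int_{\H}\left|\nah u\right|^2, \qquad B=\int_{\H}\left|u\right|^{4\alpha-2}\left|\nah u\right|^2, \qquad C=\int_{\H} Vu^2 .
\end{align*}
Testing with $u$ gives the Nehari identity
\begin{align*}
A+4\alpha^2 B+C=\int_{\H} f(u)u .
\end{align*}
For the Pohožaev identity, we use that $X_j,Y_j$ are homogeneous of degree one under $\delta_\theta$ and that $\operatorname{div}Z=Q$. This is most simply done by differentiating $\theta\mapsto \I$-type energy of $u\circ\delta_\theta$ at $\theta=1$, or equivalently by multiplying by $Zu$ and integrating by parts on $B_R$. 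The result is
\begin{align*}
\frac{Q-2}{2}\bigl(A+4\alpha^2B\bigr)+\frac{Q}{2}C+\frac12\int_{\H} (ZV)u^2 = Q\int_{\H} F(u).
\end{align*}
The commutation $[X_j,Z]=X_j$ is what produces the weight $Q-2$ in front of the gradient terms.

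\textbf{Step 3: eliminating $B$.} Take the Pohožaev identity minus $\frac{Q-2}{2\cdot 2\alpha\cdot 2}$ times a combination of the Nehari identity, chosen so that the $B$-coefficient vanishes. Concretely, subtract $\frac{Q-2}{2}$ times the Nehari identity and $Q$ times the bound $\int F\le \frac{1}{2\alpha Q^*}\int f(u)u$. Since $\frac{Q}{2\alpha Q^*}=\frac{Q-2}{4\alpha}$, we have
\begin{align*}
Q\int F(u)\le \frac{Q-2}{4\alpha}\bigl(A+4\alpha^2B+C\bigr).
\end{align*}
Inserting this into the Pohožaev identity, the $B$-terms cancel exactly, which is where the exponent $2\alpha Q^*$ enters. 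We are left with
\begin{align*}
\Bigl(\frac{Q-2}{2}-\frac{Q-2}{4\alpha}\Bigr)A+\Bigl(\frac Q2-\frac{Q-2}{4\alpha}\Bigr)C+\frac12\int_{\H}(ZV)u^2\le 0 .
\end{align*}
By $(V_3)$, $\frac12\int (ZV)u^2\ge -\frac{\kappa}{2}C$. The coefficient of $C$ then becomes $\frac12\bigl(Q-\frac{Q-2}{2\alpha}-\kappa\bigr)\ge0$. Since $\alpha>\frac12$, the coefficient of $A$ is $(Q-2)\frac{2\alpha-1}{4\alpha}>0$. Hence $A=0$, so $u$ is constant, and integrability gives $u\equiv0$, a contradiction.

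\textbf{Main obstacle.} The hardest step is the rigorous justification of the Pohožaev identity for $u=g(v)$. This requires truncating with cutoffs $\psi(\delta_{1/R}\xi)$, controlling the terms containing $\nah\psi$ through the exponential decay, and handling the term $|u|^{4\alpha-2}$ when $\alpha<1$, where it is singular at $u=0$. I would first prove the identity for the transformed semilinear equation satisfied by $v$, where regularity is better, and then translate it back via $u=g(v)$ and $g'(v)^2(1+4\alpha^2|g|^{4\alpha-2})=1$.
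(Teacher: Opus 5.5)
Your route is the paper's: a Nehari identity, a Poho\v{z}aev identity obtained by testing the $v$-equation with $\chi_R\,Zv$, elimination of the quasilinear energy at the exponent $2\alpha Q^{*}$, and then $(V_3)$ and $\alpha>\frac12$ to force $\int_{\H}|\nah u|^2=0$. However, the Poho\v{z}aev identity you write down has the wrong coefficient on $B$, and your Step~3 does not follow from it. The correct identity (Proposition~\ref{prop:poho}) is
\[
\frac{Q-2}{2}\bigl(A+2\alpha B\bigr)+\frac{Q}{2}\,C+\frac12\int_{\H}(ZV)u^2=Q\int_{\H}F(u),
\]
with $2\alpha$, not $4\alpha^2$, in front of $B$.

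The ODE defining $g$ is $g'(s)^2\bigl(1+2\alpha|g(s)|^{4\alpha-2}\bigr)=1$, not $g'^2\bigl(1+4\alpha^2|g|^{4\alpha-2}\bigr)=1$. Hence $|\nah v|^2=(1+2\alpha|u|^{4\alpha-2})|\nah u|^2$. You can see the same thing by testing the quasilinear term directly with $Zu$. Put $w=|u|^{2\alpha}$; then $|u|^{2\alpha-2}u\,Zu=\frac{1}{2\alpha}Zw$. The commutator computation then gives $-\frac{Q-2}{4\alpha}\int_{\H}|\nah w|^2=-\frac{Q-2}{2}\,2\alpha B$. The factor $4\alpha^2$ belongs only to the Nehari identity, where the test function $u$ turns $|u|^{2\alpha-2}u\cdot u$ into $w$ itself.

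This matters because Step~3 rests entirely on the ratio of the two $B$-coefficients. Suppose you bound $Q\int_{\H}F(u)$ by $\frac{Q}{\gamma}$ times the Nehari identity. With your identity, $B$ drops out only for $\gamma=Q^{*}$. At $\gamma=2\alpha Q^{*}$ a term $\alpha(2\alpha-1)(Q-2)B$ survives, so your claimed exact cancellation ``which is where $2\alpha Q^{*}$ enters'' is false as written. The leftover happens to be nonnegative, so your final inequality would survive, but it is derived from a false identity.

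With the correct coefficient, $\frac{Q-2}{4\alpha}\cdot 4\alpha^2=\frac{Q-2}{2}\cdot 2\alpha$, and the cancellation is exact precisely at $2\alpha Q^{*}$. The rest of your Step~3 is then correct: the coefficient of $A$ is $(Q-2)\frac{2\alpha-1}{4\alpha}>0$, and the coefficient of $C$ is $\frac12\bigl(Q-\frac{Q-2}{2\alpha}-\kappa\bigr)\geq 0$ after using $(V_3)$. Multiplied by $\frac{4\alpha}{Q-2}$, it is exactly the inequality the paper obtains from \eqref{eq:star} together with $f(s)s\geq 2\alpha Q^{*}F(s)$.

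A minor point: $|u|^{4\alpha-2}$ is not singular at $u=0$, since $4\alpha-2>0$.
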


	\begin{co}{\label{cor:lambdaintro}}
		Assume $(V_1)$ and $(V_3)$, and let $2<q<2\alpha Q^{*}$ and $\lambda \leq 0$. Then
		\begin{align*}
			-\Delta_{\mathbb{H}} u + V(\xi)u - \Delta_{\mathbb{H}}\!\left(\left|u\right|^{2\alpha}\right)\left|u\right|^{2\alpha-2}u = \lambda \left|u\right|^{q-2}u + \left|u\right|^{2\alpha Q^{*}-2}u \quad \text{ in } \H
		\end{align*}
		has no nontrivial weak solution $u=g(v)$ with $v \in S^2_1(\H)$. In particular, the purely critical problem, $\lambda=0$, has no nontrivial solution.
	\end{co}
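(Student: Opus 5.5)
The plan is to deduce the corollary directly from Theorem \ref{thm:nonex} by specializing the nonlinearity to
\begin{align*}
f(s)=\lambda \left|s\right|^{q-2}s+\left|s\right|^{2\alpha Q^{*}-2}s, \qquad F(s)=\frac{\lambda}{q}\left|s\right|^{q}+\frac{1}{2\alpha Q^{*}}\left|s\right|^{2\alpha Q^{*}}.
\end{align*}
Since $q>2$, $f$ is continuous and odd on $\R$. The argument then has three parts: check the hypotheses $(f_1)$ and $(f_2)$ of Section 3, verify the structural inequality $f(s)s\ge 2\alpha Q^{*}F(s)$, and invoke the theorem.

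For the structural inequality a direct computation gives
\begin{align*}
f(s)s-2\alpha Q^{*}F(s)=\lambda\left(1-\frac{2\alpha Q^{*}}{q}\right)\left|s\right|^{q}.
\end{align*}
The critical terms cancel exactly, since $\left|s\right|^{2\alpha Q^*}-\frac{2\alpha Q^*}{2\alpha Q^*}\left|s\right|^{2\alpha Q^*}=0$. Because $q<2\alpha Q^{*}$, the factor $1-\frac{2\alpha Q^{*}}{q}$ is negative. With $\lambda\le0$, the product is therefore $\ge 0$ for every $s\in\R$, with equality when $\lambda=0$. This is precisely the condition in Theorem \ref{thm:nonex}, and it is where the sign of $\lambda$ enters.

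The remaining task is to confirm that $f$ satisfies $(f_1)$ and $(f_2)$. Those conditions appear in Section 3, which I cannot see. Since they exist to support the regularity theory (boundedness and decay of weak solutions), I expect them to be growth bounds of the type $|f(s)|\le C(|s|+|s|^{2\alpha Q^{*}-1})$, possibly together with a behavior condition near zero. Such bounds hold here. For $|s|\le1$ we have $|s|^{q-1}\le|s|$ because $q>2$, and for $|s|\ge1$ we have $|s|^{q-1}\le|s|^{2\alpha Q^{*}-1}$ because $q<2\alpha Q^*$. This verification is the step I am least sure of. If $(f_1)$ requires something like $f(s)=o(s)$ at $0$, it still holds, because $q>2$. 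I do not expect any condition of Ambrosetti--Rabinowitz type, since those would be unnecessary for nonexistence.

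Once these hypotheses are checked, Theorem \ref{thm:nonex}, applied with $V$ satisfying $(V_1)$ and $(V_3)$, shows that there is no nontrivial weak solution $u=g(v)$ with $v\in S^2_1(\H)$. Taking $\lambda=0$ gives the purely critical case as a special instance.
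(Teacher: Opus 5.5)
Your proposal is correct and follows the paper's proof: specialize $f$, observe that $f(s)s-2\alpha Q^{*}F(s)=\lambda\bigl(1-\frac{2\alpha Q^{*}}{q}\bigr)\left|s\right|^{q}\ge 0$ because the critical terms cancel, check $(f_1)$ and $(f_2)$, and apply Theorem \ref{thm:nonex}. Your guess for $(f_1)$ is exactly the paper's condition, and the paper's $(f_2)$ requires $f(t)t\le \ell t^2$ for $\left|t\right|\le\eta_0$ with some $\ell<V_0$; this follows from $f(t)t=O(\left|t\right|^{q})$ with $q>2$, as you anticipated.
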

	Read together, Theorem \ref{theorem:T} and Corollary \ref{cor:lambdaintro} say that 	under $(V_1)$, $(V_2)$ and $(V_3)$, for $p=2\alpha Q^{*}$ and 	$\max\{4\alpha,\,q_*\}<q<2\alpha Q^{*}$, the solvability of \eqref{eq:P} in the class 	$u=g(v)$, $v \in S^2_1(\H)$, is decided by the sign of $\lambda$: a nontrivial 	solution exists when $\lambda>0$ and none exists when $\lambda \leq 0$. The subcritical perturbation is a necessary ingredient. The three hypotheses are compatible, for instance for 	$V(\xi)=V_\infty-a(1+\left|\xi\right|_{\mathbb{H}}^{4})^{-1}$ with $0<a<V_\infty$; on 	the other hand $(V_1)$ and $(V_2)$ do not imply $(V_3)$, and a nonconstant 	$\Ga_{\mathbb{Z}}$-invariant potential satisfies neither, so Corollary 	\ref{cor:lambdaintro} and the case $(V_2^\prime)$ of Theorem \ref{theorem:T} do not 	overlap. See Remark \ref{rem:V4}.\\
    The proof of Theorem \ref{theorem:T} follows the dual approach of Colin and Jeanjean 	\cite{colin2004solutions}. We set $u=g(v)$, where $g$ is defined by an ordinary differential equation, which converts \eqref{eq:P} into a semilinear equation on 	$S^2_1(\H)$. The transformed functional has mountain pass geometry, and its Cerami 	sequences are bounded when $q>4\alpha$. Compactness fails, since the embedding 	$S^2_1(\H) \hookrightarrow L^{Q^{*}}(\H)$ is not compact, and we recover it in two steps. First, an energy estimate places the minimax level below $S^{Q/2}/(2\alpha Q)$; it is obtained by testing with truncated extremals of the Folland--Stein inequality \cite{citti1995semilinear}. Second, a vanishing lemma of 	Lions type \cite{lions1984concentration}, adapted to gauge balls, supplies a sequence 	$\{\xi_n\}$ along which the mass of $v_n$ does not escape. Translating by 	$\tau_{\xi_n}$ then yields a nontrivial weak limit.
	
	Theorem \ref{thm:nonex} rests instead on a Poho\v{z}aev identity for 	$\Delta_{\mathbb{H}}$, proved by testing with $\chi_R\,Zv$. There the commutator 	relations $[X_i,Z]=X_i$ and $[Y_i,Z]=Y_i$ replace the Euclidean scaling computation. 	The non-horizontal part $2tT$ of $Z$ is controlled by the exponential decay of Lemma 	\ref{lemma:decay}.

	The paper is organised as follows. Section 2 reviews the Heisenberg group and sets up the variational framework for \eqref{eq:P} after the change of variable. Section 3 proves that every weak solution is bounded and decays exponentially in the gauge, and uses this to establish the Poho\v{z}aev identity. Section 4 contains the technical lemmas: integrability of $g(v)$, the decomposition of $I_\lambda$ that isolates the critical term, the mountain pass geometry, and the boundedness of Cerami sequences. Section 5 gives the upper bound on the minimax level, Section 6 proves Theorem \ref{theorem:T}, and Section 7 proves Theorem \ref{thm:nonex} and Corollary \ref{cor:lambda}.

	\section{Preliminaries and Variational framework}

	This Section starts with a brief review of the basic properties of the Heisenberg group. For a complete treatment of the Heisenberg group, we refer to \cite{garofalo1990frequency, ivanov2011extremals} and references therein.
	The Heisenberg group $\H$ is a Lie group of topological dimension $2N + 1$, whose underlying manifold is $\R^{2N+1}$, endowed with the following group law:
	\begin{align*}
		\xi \circ \xi'=(x,y,t) \circ(x',y',t')=\left(x+x',\,y+y',\,t+t'+ 2\big(\langle x',y \rangle - \langle x,y' \rangle\big)\right),
	\end{align*}
	where $\xi=(x,y,t), \xi'= (x',y',t') \in \H.$

	The inverse of $\xi \in \H$ is given by $\xi^{-1}=-\xi$ and $(\xi \circ \xi')^{-1}=(\xi')^{-1} \circ \xi^{-1}.$ The corresponding Lie algebra of left invariant vector fields is generated by vector fields given as follows
	\begin{align*}
		X_i= \frac{\partial}{\partial x_i}+2y_i \frac{\partial}{\partial t}, \quad Y_i= \frac{\partial}{\partial y_i}- 2x_i \frac{\partial}{\partial t}, \quad T= \frac{\partial}{\partial t}.
	\end{align*}
	A direct computation gives, for all $i,j=1,2,\dots,N,$
	\begin{align*}
		[X_i,X_j]=[Y_i,Y_j]=[X_i,T]=[Y_i,T]=0
	\end{align*}
	and
	\begin{align*}
		[X_i,Y_j]= -4\delta_{ij} \frac{\partial}{\partial t}.
	\end{align*}
	These relations reproduce the canonical commutation relations of quantum mechanics for position and momentum, which is the origin of the name \cite{heisenberg2013physical}.
	We define the left translation $\tau_\xi: \H \to \H $ by $\tau_\xi(\xi')=\xi \circ \xi'$ and the anisotropic dilation $\delta_\theta: \H \to \H$ by
	\begin{align*}
		\delta_\theta(x,y,t)=(\theta x, \theta y, \theta^2t), \qquad \theta>0.
	\end{align*}
	The subelliptic Laplacian or Kohn Laplacian $\Delta_{\mathbb{H}}$ on $\H$ is a second order self-adjoint operator defined as follows:
	\begin{align*}
		\Delta_{\mathbb{H}}= \sum_{j=1}^{N}\left(X_{j}^2 +Y_{j}^2\right).
	\end{align*}
	H{\"o}rmander \cite{hormander1967hypoelliptic} studied operators given by sums of squares of vector fields, and that work is the starting point of the analysis on homogeneous Lie groups used here. The operator $\Delta_{\mathbb{H}}$ is hypoelliptic, and its fundamental solution was found by Folland \cite{folland1973fundamental}. In \cite{folland1975subelliptic}, the same author proved subelliptic estimates and introduced the associated function spaces on nilpotent Lie groups. In the divergence form, the Kohn Laplacian is defined as $\Delta_{\mathbb{H}} u= \nabla\cdot(B\nabla u)$, where $B$ is the following $(2N+1) \times (2N+1)$ matrix:
	\begin{align*}
		B=\begin{bmatrix}
			I & 0 & 2y^T \\
			0 & I & -2x^T \\
			2y & -2x & 4(x^2+y^2)
		\end{bmatrix}
	\end{align*}
	where $I$ is $N \times N$ identity matrix and $x^2+y^2=\sum_{j=1}^{N} x_j^2+y_j^2.$ Hence, the following Gauss-Green formula holds:
	\begin{align*}
		\int_\Omega \Delta_{\mathbb{H}} u\, v\,d\xi=-\int_\Omega \nah u\cdot \nah v \,d\xi + \int_{\partial \Omega} v\, B\nabla u\cdot\nu \, d\sigma,
	\end{align*}
	where $\nah u= (X_1u, \dots ,X_{N}u, Y_1u,\dots , Y_{N}u)$ is a $2N$ vector and $\nu$ is unit outward normal to the boundary $\partial \Omega.$
	The operator $\Delta_{\mathbb{H}}$ is invariant under left translations and homogeneous of degree $2$ with respect to $\de_\theta$; that is,
	\begin{align*}
		\Delta_{\mathbb{H}}(u \circ \tau_\xi)=(\Delta_{\mathbb{H}} u) \circ \tau_\xi, \quad \Delta_{\mathbb{H}}(u \circ \delta_\theta)= \theta^2 (\Delta_{\mathbb{H}} u)\circ \delta_\theta.
	\end{align*}
	The Jacobian determinant of $\delta_\theta$ is $\theta^Q$. The number $Q=2N+2$ is called the homogeneous dimension of $\H$, and it plays a role equivalent to topological dimension in Euclidean space. The homogeneous norm on $\H$ is defined by
	\begin{align*}
		\left|\xi\right|_\mathbb{H}=\left|(x,y,t)\right|_\mathbb{H}=(t^2+(x^2+y^2)^2)^\frac{1}{4} \quad \text{ for all } \xi=(x,y,t) \in \H,
	\end{align*}
	and $B_r(\xi_0)=\{\xi \in \H: \left|\xi_0^{-1}\circ \xi\right|_\mathbb{H}<r\}$ denotes the corresponding gauge ball.

	Analogous to space $W^{1,2}(\mathbb{R}^m)$, Folland and Stein \cite{folland1974estimates} introduced the space $S^2_1(\H)$ which is related to vector fields $X_j$ and $Y_j.$ The space
	\begin{align*}
		S^2_1(\H)= \{u\in L^2(\H): X_j u, Y_j u \in L^2(\H), \text{ for all } j=1,2,\dots,N\}
	\end{align*}
	is a Hilbert space with inner product
	\begin{align*}
		\langle u,v\rangle= \int_\H \nah u\cdot \nah v\, d\xi + \int_\H uv\, d\xi
	\end{align*}
	and the corresponding norm is
	\begin{align*}
		\|u\|_{S^2_1(\H)}= \left(\int_\H \left|\nah u\right|^2 d\xi + \int_\H \left|u\right|^2 d\xi\right)^\frac{1}{2}.
	\end{align*}

	In \cite{folland1974estimates}, Folland and Stein proved the following Sobolev-type inequality: there exists a positive constant $C_S$ such that
	\begin{align*}
		\left|u\right|_{Q^*} \leq C_S \|\nah u\|_{2}, \quad  \text{for all } u \in S^2_1(\H),
	\end{align*}
	where $\left|u\right|_{Q^*}$ is the norm in $L^{Q^*}(\H).$ Interpolating between $L^2(\H)$ and $L^{Q^*}(\H)$ gives the continuous embedding $S^2_1(\H) \hookrightarrow L^m(\H)$ for every $m \in [2,Q^*]$. The best constant for the embedding $S^2_1(\H)$ into $L^{Q^*}(\H)$ is defined as
	\begin{align*}
		S= \inf\left\{\int_\H \left| \nah u \right|^2 d\xi \;:\; u \in S^2_1(\H),\; \int_\H \left|u \right|^{Q^*} d\xi=1\right\}.
	\end{align*}

	The natural energy functional associated with problem \eqref{eq:P} is defined as
	\begin{align*}
		J_{\lambda}(u)= \frac{1}{2} \int_\H \left(1+ 2\alpha \left|u\right|^{2(2\alpha-1)}\right) \left|\nah u\right|^2\ d\xi +\frac{1}{2} \int_\H V(\xi) u^2d\xi- \int_\H F(u)d\xi,
	\end{align*}
	where $F(u)= \frac{\lambda}{q} \left|u\right|^{q} +\frac{1}{p} \left|u\right|^{p}$ is the primitive of $f(u)= \lambda \left|u\right|^{q-2}u + \left|u\right|^{p-2}u.$ This functional is not well defined on $S^2_1(\H)$ because of the term $\left|u\right|^{2(2\alpha-1)}\left|\nah u\right|^2$. To deal with it, we employ a change of variable $u=g(v),$ where $g$ is the solution of the initial value problem
	\begin{align*}
		g^{\prime}(s)&=\left(1+2\alpha \left|g(s)\right|^{2(2\alpha -1)}\right)^{- \frac{1}{2}} \text{ on } [0,\infty), \\g(s)&=-g(-s) \text{ on } (-\infty,0].
	\end{align*}
	After this transformation, the functional $I_{\lambda}: S^2_1(\H) \to \R$ is
	\begin{align*}
		I_{\lambda}(v)= J_{\lambda}(g(v))= \frac{1}{2} \int_\H \left|\nah v\right|^2d\xi + \frac{1}{2} \int_\H V(\xi)g^2(v)d\xi- \int_\H F(g(v))d\xi.
	\end{align*}
	The finiteness of $I_\lambda$ on $S^2_1(\mathbb{H}^N)$ is proved in section 4 ; the proof uses $\alpha>\frac12$ and the Folland--Stein embedding. It follows that $\I \in C^1(S^2_1(\H), \R)$ with
	\begin{align*}
		\langle \I^{\prime}(v),w \rangle= \int_\H \nah v\cdot \nah w\, d\xi + \int_\H [V(\xi)g(v)-f(g(v))] g^{\prime}(v) w\, d\xi,
	\end{align*}
	for every $w \in  S^2_1(\H).$
	Critical points of $I_{\lambda}$ are weak solutions of
	\begin{equation}{\label{eq:P1}}
		-\Delta_{\mathbb{H}} v= g^{\prime}(v)\left[f(g(v))-V(\xi)g(v)\right] \quad \text{ in } \H,
	\end{equation}
	and $u=g(v)$ then solves \eqref{eq:P}.

	We record the properties of $g$ used throughout the article. For the proofs, we refer to \cite{colin2004solutions, li2015positive}.
	\begin{lemma}{\label{lemma:l1}}
		The transformation $``g"$ possesses the following properties:
		\begin{description}
			\item[$(g_1)$] $g$ is uniquely defined and invertible. Also, $g \in C^{\infty}(\R);$
			\item[$(g_2)$] $g(0)=0$, $g^\prime(0)=1,$ and  $0<g^{\prime}(t) \leq 1, \text{ for all } t\in \R ;$
			\item[$(g_3)$] $ \begin{cases} \frac{1}{2} g(t)\leq \alpha t g^{\prime}(t) \leq \alpha g(t), & t>0, \\ \frac{1}{2} g(t)\geq \alpha t g^{\prime}(t) \geq \alpha g(t), & t<0; \end{cases}$
			\item[$(g_4)$] $\left|g(t)\right| \leq \left|t\right|$ and $\left|g(t)\right|^{2\alpha} \leq (2\alpha)^{\frac{1}{2}} \left|t\right|, \text{ for all } t\in \R;$

			\item[$(g_5)$] $\displaystyle \lim_ {t\to \infty}\frac{g(t)}{t^{\frac{1}{2\alpha}}}=(2\alpha)^{\frac{1}{4\alpha}};$ $\left|g(t)\right|^{2\alpha -1} g^{\prime}(t) \leq \frac{1}{\sqrt{(2\alpha)}}, \text{ for all } t\in \R;$
			\item[$(g_6)$] $ \left|g(t)\right| \geq \begin{cases}  g(1)\left|t\right|, & \left|t\right| \leq 1, \\  g(1)\left|t\right|^{\frac{1}{2\alpha}}, &\left|t\right| \geq 1; \end{cases} $
			\item[$(g_7)$] $g^{\prime \prime}(t) <0, \text{ for } t>0 \text{ and } g^{\prime \prime}(t) >0, \text{ for } t<0;$
			\item[$(g_8)$] the function $t \mapsto \dfrac{g(t)^{\ba}\,g^{\prime}(t)}{t}$ is strictly increasing on $(0,\infty)$ for every $\ba \geq 4\alpha-1$.
		\end{description}
	\end{lemma}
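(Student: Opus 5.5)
The properties of $g$ all follow from the ODE $g'=(1+2\alpha|g|^{2(2\alpha-1)})^{-1/2}$, so the plan is to derive them in order from this formula. For $(g_1)$, the right-hand side is locally Lipschitz in $g$ because $2(2\alpha-1)>0$, and it is bounded by $1$. Hence there is a unique global solution with $g(0)=0$, and it is strictly increasing, so invertible. Smoothness comes from bootstrapping the ODE.

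\emph{Away from $0$ and at $0$.} For $(g_1)$ away from $0$, $|g|^{2(2\alpha-1)}$ is smooth wherever $g\neq0$. At $0$, I would work with the inverse function instead: $h=g^{-1}$ satisfies $h'(u)=(1+2\alpha|u|^{4\alpha-2})^{1/2}$. This only gives finite regularity when $4\alpha-2$ is not an even integer. I therefore expect the claim $g\in C^\infty$ to hold only in the sense used in \cite{colin2004solutions, li2015positive}, and I would simply cite it there.

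\emph{$(g_2)$ and $(g_7)$.} $(g_2)$ is immediate from the ODE. For $(g_7)$, differentiate the ODE for $t>0$:
\begin{align*}
g''=-\tfrac12\bigl(1+2\alpha g^{4\alpha-2}\bigr)^{-3/2}\,2\alpha(4\alpha-2)\,g^{4\alpha-3}g'<0 .
\end{align*}
Oddness of $g$ then gives the sign for $t<0$.

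\emph{$(g_3)$.} By oddness it suffices to take $t>0$. The upper bound $tg'\le g$ follows from concavity, since $g(t)=\int_0^t g'\ge tg'(t)$. For the lower bound, set $\phi(t)=2\alpha tg'(t)-g(t)$, so $\phi(0)=0$. Writing $tg'=t(1+2\alpha g^{4\alpha-2})^{-1/2}$ and using the inverse function, the claim becomes
\begin{align*}
2\alpha\, h(u)\ge u\,(1+2\alpha u^{4\alpha-2})^{1/2}=u\,h'(u).
\end{align*}
I would verify this by differentiating both sides. The derivative of the left side is $2\alpha h'$, and the derivative of the right side is $h'+uh''$, so it suffices to check $uh''\le(2\alpha-1)h'$. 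This reduces to
\begin{align*}
\alpha(2\alpha-1)u^{4\alpha-2}\le(2\alpha-1)\bigl(1+2\alpha u^{4\alpha-2}\bigr),
\end{align*}
which holds.

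\emph{$(g_4)$.} The bound $|g|\le|t|$ comes from $g'\le1$. For the second bound, $(g^{2\alpha})'=2\alpha g^{2\alpha-1}g'\le 2\alpha g^{2\alpha-1}/(\sqrt{2\alpha}\,g^{2\alpha-1})=\sqrt{2\alpha}$, and integrating gives it. This computation is also the second half of $(g_5)$.

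\emph{$(g_5)$, limit.} The limit follows by l'Hôpital applied to $g^{2\alpha}/t$, since $(g^{2\alpha})'\to\sqrt{2\alpha}$ as $g\to\infty$.

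\emph{$(g_6)$.} For $|t|\le1$, concavity gives $g(t)/t\ge g(1)$. For $t\ge1$, $(g_3)$ gives $(g/t^{1/(2\alpha)})'\ge0$, so $g(t)/t^{1/(2\alpha)}\ge g(1)$.

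\emph{$(g_8)$, the main obstacle.} Let $k(t)=g^\beta g'/t$. Then $k'\ge0$ is equivalent to
\begin{align*}
\beta t g'^2+t g g''-gg'\ge0 .
\end{align*}
Using $g''=-(2\alpha-1)\,2\alpha g^{4\alpha-3}g'^4$ (from $g'^{-2}=1+2\alpha g^{4\alpha-2}$) and then dividing by $g'^2$, the condition becomes
\begin{align*}
\beta t-\frac{g}{g'}-2\alpha(2\alpha-1)\,t\,g^{4\alpha-2}g'^2\ge0 .
\end{align*}
Now insert $g/g'\le 2\alpha t$ from $(g_3)$ and $2\alpha g^{4\alpha-2}g'^2=1-g'^2<1$. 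The left side is then strictly greater than $t\bigl(\beta-2\alpha-(2\alpha-1)\bigr)=t\bigl(\beta-(4\alpha-1)\bigr)\ge0$, with the inequality strict because $g'^2>0$. Hence $k$ is strictly increasing for $\beta\ge4\alpha-1$. This sign bookkeeping is the step I would check most carefully.
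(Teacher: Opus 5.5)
The paper gives no proof of this lemma; it only refers to \cite{colin2004solutions, li2015positive}. Your proposal is therefore a self-contained substitute, and it is correct for $(g_2)$--$(g_8)$.

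\emph{Comparison of routes.} Your argument runs through the inverse $h=g^{-1}$, $h(u)=\int_0^u(1+2\alpha|w|^{4\alpha-2})^{1/2}\,dw$, and the identity $g''=-2\alpha(2\alpha-1)g^{4\alpha-3}(g')^4$. These are the same devices the paper uses later in Lemma~\ref{lemma:gg}. A shorter route to $(g_3)$ is the identity
$(g/g')'=1+\frac{2\alpha(2\alpha-1)|g|^{4\alpha-2}}{1+2\alpha|g|^{4\alpha-2}}\in[1,2\alpha)$
from Lemma~\ref{lemma:l4}. Integrating it from $0$ gives $t\,g'\le g\le 2\alpha t\,g'$ for $t>0$ at once. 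Your $(g_8)$ argument combines $g/g'\le 2\alpha t$ with $2\alpha g^{4\alpha-2}(g')^2=1-(g')^2<1$; it is right and gives strict monotonicity. Doing the derivation yourself checks the properties for general $\alpha>\frac12$, whereas \cite{colin2004solutions} works with $\alpha=1$, and that is exactly where it pays off.

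\emph{On $(g_1)$.} Your suspicion is justified: the statement is false as written, so citing it is not an option. Since $h'(u)=(1+2\alpha|u|^{4\alpha-2})^{1/2}$, $g$ is $C^\infty$ on all of $\R$ exactly when $2\alpha-1\in\mathbb{N}$. For $\alpha=\frac34$, $g''$ jumps from $\frac34$ to $-\frac34$ at $0$. For $\frac12<\alpha<\frac34$, one has $g''(t)\to-\infty$ as $t\to0^{+}$. The correct statement is $g\in C^1(\R)\cap C^\infty(\R\setminus\{0\})$, and that is all the later arguments use.

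\emph{Small repairs.}
(i) For $\frac12<\alpha<\frac34$ the map $s\mapsto(1+2\alpha|s|^{4\alpha-2})^{-1/2}$ is not Lipschitz at $s=0$, so Picard--Lindel\"of does not apply. Get existence and uniqueness from $h$ instead: $(h\circ g)'=1$ forces $g=h^{-1}$.
(ii) In $(g_3)$ the reduced inequality should read $2\alpha(2\alpha-1)u^{4\alpha-2}\le(2\alpha-1)(1+2\alpha u^{4\alpha-2})$; you dropped a factor $2$, but it still holds.
(iii) The l'H\^opital step in $(g_5)$ needs $g(t)\to\infty$. This holds because a bounded increasing $g$ would have $g'$ bounded away from $0$.
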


	\section{Regularity, decay and the Poho\v{z}aev identity}

	This section proves that every weak solution of \eqref{eq:P1} is bounded and decays exponentially in the gauge, and uses this to establish the Poho\v{z}aev identity. The results are used twice: in Section 6, for the limit problem, and in Section 7, for the nonexistence theorem. For Lemmas \ref{lemma:linf} and \ref{lemma:decay}, the potential is required to satisfy $(V_1)$ along with $(V_2)$, so a constant potential is covered; Proposition \ref{prop:poho} needs $(V_3)$ in addition.

	We write $\varrho(\xi)=\left|\xi\right|_{\mathbb{H}}$, $\zeta=(x,y)$ and
	\begin{align*}
		A_R = \left\{\xi \in \H \;:\; R< \varrho(\xi) <2R\right\}, R>0,
	\end{align*}
	for the gauge annulus. We fix once and for all $\chi \in C^\infty_c([0,\infty))$ with $0 \leq \chi \leq 1$, $\chi \equiv 1$ on $[0,1]$ and $\chi \equiv 0$ on $[2,\infty)$, and put $\chi_R(\xi)=\chi(\varrho(\xi)/R)$, so that $\nah \chi_R$ and $Z\chi_R$ are supported in $A_R$.

	The nonlinearity is assumed only to satisfy
	\begin{description}
		\item[$(f_1)$] there is $C_f>0$ with $\left|f(t)\right| \leq C_f\left(\left|t\right|+\left|t\right|^{2\alpha Q^{*}-1}\right)$ for all $t \in \R$;
		\item[$(f_2)$] there are $\ell<V_0$ and $\eta_0>0$ with $f(t)t \leq \ell\,t^2$ for all $\left|t\right| \leq \eta_0$.
	\end{description}
	Both hold for the nonlinearity of \eqref{eq:P} whenever $2<q<p \leq 2\alpha Q^{*}$ and $\lambda \in \R$: for $(f_2)$ one may take any $\ell \in (0,V_0)$, since $f(t)t=O(\left|t\right|^{q})$ near the origin and $q>2$.

	\begin{lemma}{\label{lemma:Zcomm}}
		The vector field $Z$ satisfies
		\begin{itemize}
			\item[(i)] $\operatorname{div} Z = Q$, where the divergence is taken in $\R^{2N+1}$; consequently, for $u \in C^1_0(\H)$ and $\varphi \in C^1(\H)$,
			\begin{align*}
				\int_\H \varphi\, Zu\, d\xi = -\int_\H u \left(Z\varphi + Q\varphi\right)d\xi;
			\end{align*}
			\item[(ii)] $[X_i, Z]= X_i$ and $[Y_i,Z]=Y_i$ for every $i=1,\dots,N$, and hence
			\begin{align*}
				\nah (Zu) = Z\left(\nah u\right) + \nah u,
			\end{align*}
			the field $Z$ acting componentwise;
			\item[(iii)] $ Z= \sum_{i=1}^{N}\left(x_i X_i + y_i Y_i\right) + 2tT$.
		\end{itemize}
	\end{lemma}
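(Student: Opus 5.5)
The plan is to verify the three items by direct computation in the coordinates $(x,y,t)$, starting with (iii) since it is purely algebraic, then (ii), then (i).

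For (iii) I expand the right-hand side. By definition
\begin{align*}
x_iX_i = x_i\partial_{x_i}+2x_iy_i\partial_t, \qquad y_iY_i = y_i\partial_{y_i}-2x_iy_i\partial_t .
\end{align*}
Summing over $i$, the mixed $\partial_t$ terms cancel and leave $\sum_i(x_i\partial_{x_i}+y_i\partial_{y_i})$. Adding $2tT=2t\partial_t$ then gives exactly $Z$.

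For (ii) I use the formula for the bracket of two first-order operators,
\begin{align*}
[A,B]=\sum_k \big(A(b_k)-B(a_k)\big)\partial_k,
\end{align*}
where $a_k$ and $b_k$ are the coefficients of $A$ and $B$. Take $A=X_i$ and $B=Z$.
\begin{itemize}
\item Applying $X_i$ to the coefficients of $Z$: it hits $x_i$ to give $\partial_{x_i}$, and hits $2t$ to give $X_i(2t)=4y_i$, hence a term $4y_i\partial_t$.
\item Applying $Z$ to the coefficients of $X_i$: only $2y_i$ is non-constant, and $Z(2y_i)=2y_i$, contributing $-2y_i\partial_t$.
\end{itemize}
Altogether $[X_i,Z]=\partial_{x_i}+2y_i\partial_t=X_i$. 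The computation for $Y_i$ is identical: $Y_i(2t)=-4x_i$ and $Z(-2x_i)=-2x_i$, so $[Y_i,Z]=\partial_{y_i}-2x_i\partial_t=Y_i$. (Conceptually, this says $X_i,Y_i$ are homogeneous of degree $1$ under $\delta_\theta$, whose generator is $Z$.) The identity $\nah(Zu)=Z(\nah u)+\nah u$ is then the componentwise rewriting $X_i Z u = Z X_i u + X_i u$, and likewise for $Y_i$, valid for $u\in C^2$.

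For (i), $\operatorname{div}Z=\sum_i(1+1)+\partial_t(2t)=2N+2=Q$. Since $u$ has compact support, the Euclidean divergence theorem applied to the vector field $u\varphi Z$ gives
\begin{align*}
0=\int_{\H}\operatorname{div}(u\varphi Z)\,d\xi=\int_{\H}\big(\varphi\,Zu+u\,Z\varphi+Q\,u\varphi\big)\,d\xi ,
\end{align*}
which is the stated formula.

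No step is a real obstacle. The only points needing care are the signs in the $\partial_t$ components of the brackets, and noting that the compact support of $u$ is what removes the boundary term in (i), so that $\varphi$ itself need not decay.
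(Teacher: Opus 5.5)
Your proposal is correct and follows essentially the same route as the paper: a direct coordinate computation for (iii) and (ii), and the Euclidean divergence theorem applied to $u\varphi Z$ for (i). The only cosmetic difference is in (ii): you compute the brackets via the coefficient formula $[A,B]=\sum_k\bigl(A(b_k)-B(a_k)\bigr)\partial_k$, whereas the paper expands $X_i(Zu)-Z(X_iu)$ on a function, and these are the same calculation.
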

	\begin{proof}
		(i) Regarded as a vector field on $\R^{2N+1}$, $Z=(x,y,2t)$, so $\operatorname{div}Z = N + N + 2 = Q$. The integration by parts formula is the divergence theorem applied to $u\varphi Z$.

		(ii) Write $X_i = \partial_{x_i}+2y_i\partial_t$. Since $\partial_{x_i}(Zu)= u_{x_i}+Z(u_{x_i})$ and $\partial_t (Zu) = 2u_t + Z(u_t)$,
		\begin{align*}
			X_i(Zu) = u_{x_i}+Z(u_{x_i}) + 2y_i\left(2u_t + Z(u_t)\right),
		\end{align*}
		whereas
		\begin{align*}
			Z(X_i u) = Z(u_{x_i}) + 2Z(y_i u_t) = Z(u_{x_i}) + 2y_i u_t + 2y_i Z(u_t).
		\end{align*}
		Subtracting, $[X_i,Z]u = u_{x_i}+2y_i u_t = X_i u$. Similarly, the result holds for $Y_i = \partial_{y_i}-2x_i\partial_t$.

		(iii) Substituting the definitions of $X_i, Y_i$ into the right-hand side, the coefficients of $\partial_t$ contribute $\sum_i (2x_iy_i - 2y_ix_i)=0$. \qed
	\end{proof}
	\begin{lemma}{\label{lemma:linf}}
		Assume $(V_1)$ and $(V_2)$, together with $(f_1)$, and let $v \in S^2_1(\H)$ be a weak solution of \eqref{eq:P1}. Then $v \in L^{r}(\H)$ for every $r \in [2,\infty].$ In particular, $v \in L^\infty(\H)$.
	\end{lemma}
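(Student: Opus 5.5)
We run a Moser iteration (Brezis--Kato type) on the transformed equation \eqref{eq:P1}. The right-hand side is $h(\xi,v):=g'(v)\left[f(g(v))-V(\xi)g(v)\right]$, and the first step is to bound it by powers of $v$.

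\textbf{Step 1: growth bound for $h$.} By $(f_1)$, $(g_2)$ and $(g_4)$,
\begin{align*}
\left|g'(v)f(g(v))\right| \le C_f\left(\left|g(v)\right| + \left|g(v)\right|^{2\alpha Q^{*}-1}g'(v)\right).
\end{align*}
Using $(g_5)$ we write $\left|g\right|^{2\alpha Q^*-1}g' = \left|g\right|^{2\alpha-1}g'\,\left|g\right|^{2\alpha(Q^*-1)}$. Then $(g_4)$ gives
\begin{align*}
\left|g(v)\right|^{2\alpha(Q^*-1)} \le C\left|v\right|^{Q^*-1}.
\end{align*}
The potential term $-V g(v)g'(v)$ has a good sign, since $g(v)g'(v)v\ge0$ and $V\geq V_0>0$ by $(V_1)$. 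We may simply drop it when testing with odd monotone functions of $v$. Altogether
\begin{align*}
h(\xi,v)\,\mathrm{sgn}(v) \le C\left(\left|v\right| + \left|v\right|^{Q^*-1}\right) = a(\xi)\left(1+\left|v\right|\right),
\end{align*}
where $a:=C\left(1+\left|v\right|^{Q^*-2}\right)$. The important point is that $\left|v\right|^{Q^*-2}\in L^{Q/2}(\H)$, because $v\in L^{Q^*}$ by Folland--Stein. Thus the problem is linear in $v$ with a coefficient in the critical Lebesgue class $L^{Q/2}$.

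\textbf{Step 2: Brezis--Kato iteration.} For $\beta\ge1$ and $L>0$ we test with
\begin{align*}
\varphi = v\,\min\{\left|v\right|,L\}^{2(\beta-1)} \in S^2_1(\H).
\end{align*}
The left side dominates $c\beta^{-1}\int\left|\nah (v v_L^{\beta-1})\right|^2$, where $v_L=\min\{\left|v\right|,L\}$. On the right we split $a=a\chi_{\{a\le K\}}+a\chi_{\{a>K\}}$. For the large part, H\"older and Folland--Stein give
\begin{align*}
\int a\chi_{\{a>K\}}\left|v v_L^{\beta-1}\right|^2 \le \left\|a\chi_{\{a>K\}}\right\|_{Q/2}\,C_S^2\,\left\|\nah (v v_L^{\beta-1})\right\|_2^2,
\end{align*}
and we choose $K$ so large that this term is absorbed into the left side. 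The bounded part is controlled by $K\int\left|v\right|^{2\beta}$. Letting $L\to\infty$, we obtain that $v\in L^{2\beta}$ implies $v\in L^{Q^*\beta}$. Starting from $\beta=1$ and iterating once, this gives $v\in L^r$ for every $r<\infty$. Interpolating with $v\in L^2$ then covers all $r\in[2,\infty)$.

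\textbf{Step 3: $L^\infty$ bound.} Once $v\in L^r$ for large $r$, the coefficient $a$ lies in $L^{s}$ with some $s>Q/2$. Standard Moser iteration with exponents $\beta_k=\chi^k$, $\chi=Q^*/2$, then applies, now tracking the constants. The gradient bound is
\begin{align*}
\left\|v\right\|_{Q^*\beta}^{2\beta}\le C\beta^{c}\left\|v\right\|_{2\beta\,s'}^{2\beta},
\end{align*}
and we have $2s'<Q^*$ precisely because $s>Q/2$. This gives geometric growth of the exponents, so $\prod (C\beta_k^{c})^{1/\beta_k}<\infty$ and $v\in L^\infty$. Alternatively, one can run De Giorgi/Moser locally on gauge balls $B_1(\xi_0)$ with constants uniform in $\xi_0$ by left invariance, and obtain a uniform $L^\infty$ bound from the global $L^r$ bounds.

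\textbf{Main obstacle.} The main obstacle is Step 2's absorption at the critical growth. After the change of variables, the critical power $2\alpha Q^*$ becomes exactly the Folland--Stein power $Q^*$, so no slack exists. The term $\left|v\right|^{Q^*-2}$ is only in $L^{Q/2}$, and smallness of its tail in $L^{Q/2}$, not of its norm, is what makes the first step of the iteration work. The truncation $v_L$ is also needed to ensure the test functions belong to $S^2_1(\H)$. Hypothesis $(V_2)$ is not really used beyond $V$ being bounded, and in fact the nonnegative potential term is simply discarded.
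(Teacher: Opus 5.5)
Your proposal is correct and follows essentially the paper's route: a Brezis--Kato absorption using the small $L^{Q/2}$-tail of $|v|^{Q^{*}-2}$ (the paper's split into $a_1$ and the constant $a_2$, which it needs only once, at $\beta=Q^{*}/2$, to reach $L^{(Q^{*})^2/2}$), followed by a Moser iteration with constants polynomial in $\beta$ once the coefficient lies in $L^{s}$ with $s>Q/2$. The one thing to tidy is your Step 3 display: it drops the term $C\|v\|_{2\beta}^{2\beta}$ coming from the linear part of the right-hand side, which cannot be controlled by $\|v\|_{2\beta s'}$ on $\mathbb{H}^N$, and as written its iteration ratio would be $Q^{*}/(2s')$ rather than $Q^{*}/2$. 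The paper fixes both by interpolating $\|w\|_{2s'}$ between $L^2$ and $L^{Q^{*}}$ and absorbing, which gives $|w|_{Q^{*}}^2\le C_1\beta^{m}|w|_2^2$; your local alternative on gauge balls (the paper's Corollary~\ref{cor:localmoser}) also works.
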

	\begin{proof}
		Write $k(\xi,s)=g^{\prime}(s)\left[f(g(s))-V(\xi)g(s)\right]$. By $(g_2)$ and $(g_4)$, we have $g^{\prime}(s)\left|g(s)\right| \leq \left|s\right|$, while $(g_5)$ and $(g_4)$ give
		\begin{align*}
			g^{\prime}(s)\left|g(s)\right|^{2\alpha Q^{*}-1} = g^{\prime}(s)\left|g(s)\right|^{2\alpha-1}\left(\left|g(s)\right|^{2\alpha}\right)^{Q^{*}-1} \leq (2\alpha)^{\frac{2}{Q-2}}\left|s\right|^{Q^{*}-1}.
		\end{align*}
		With $(f_1)$ and $V \leq V_M$, we obtain
		\begin{equation}{\label{eq:kbound}}
			\left|k(\xi,s)\right| \leq C\left(\left|s\right|+\left|s\right|^{Q^{*}-1}\right), \quad \xi \in \H, \ s \in \R .
		\end{equation}
		Fix $L>0$ and split $\left|k(\xi,v)\right| \leq C(1+\left|v\right|^{Q^*-2})\left|v\right| \leq \left(a_1+a_2\right)\left|v\right|$, where
		\begin{align*}
			a_1 = C\left|v\right|^{Q^{*}-2}\chi_{\{\left|v\right|>L\}}, \qquad a_2 = C\left(1+L^{Q^{*}-2}\right).
		\end{align*}
		Since $v \in L^{Q^{*}}(\H)$, we have $a_1 \in L^{\frac{Q}{2}}(\H)$, with $\left|a_1\right|_{Q/2}$ as small as we please once $L$ is large; and $a_2$ is a constant.

Fix $\ba \geq 1$ and $\tau>0$, set $v_\tau=\min\{\left|v\right|,\tau\}$ and
$w_\tau=\left|v_\tau\right|^{\ba-1}v$. The function
$\varphi=\left|v_\tau\right|^{2(\ba-1)}v$ is admissible in \eqref{eq:P1}:
$\left|\varphi\right| \leq \tau^{2(\ba-1)}\left|v\right|$ and, since
$\nah\varphi=(2\ba-1)\left|v\right|^{2(\ba-1)}\nah v$ on $\{\left|v\right|<\tau\}$ while
$\nah\varphi=\tau^{2(\ba-1)}\nah v$ on $\{\left|v\right|>\tau\}$, also
$\left|\nah\varphi\right| \leq (2\ba-1)\tau^{2(\ba-1)}\left|\nah v\right|$, so
$\varphi \in S^2_1(\H)$. The same computation gives
\begin{align*}
\int_\H \nah v\cdot\nah\varphi\, d\xi
\geq \int_\H \left|v_\tau\right|^{2(\ba-1)}\left|\nah v\right|^2d\xi
\geq \frac{1}{\ba^2}\int_\H \left|\nah w_\tau\right|^2d\xi,
\end{align*}
the last inequality from
$\left|\nah w_\tau\right| \leq \ba\left|v_\tau\right|^{\ba-1}\left|\nah v\right|$.
Testing \eqref{eq:P1} with $\varphi$ and applying the Folland--Stein inequality to
$w_\tau$,
\begin{equation}{\label{eq:moserbasic}}
\left|w_\tau\right|_{Q^{*}}^2 \leq C_S^2\int_\H \left|\nah w_\tau\right|^2 d\xi
\leq C_S^2\ba^2\int_\H \left|k(\xi,v)\right|\left|v_\tau\right|^{2(\ba-1)}\left|v\right|d\xi
\leq C_S^2\ba^2\int_\H (a_1+a_2)\,w_\tau^2\,d\xi .
\end{equation}

\emph{Step 1: $v \in L^{\frac{(Q^{*})^2}{2}}(\H)$.} Take $\ba=\frac{Q^{*}}{2}$. H\"older
with exponents $\frac{Q}{2}$ and $\frac{Q}{Q-2}$ gives
$\int_\H a_1w_\tau^2\,d\xi \leq \left|a_1\right|_{Q/2}\left|w_\tau\right|_{Q^{*}}^2$.
Choose $L$ so large that $C_S^2\ba^2\left|a_1\right|_{Q/2}\leq\frac12$; absorbing into
\eqref{eq:moserbasic},
\begin{align*}
\left|w_\tau\right|_{Q^{*}}^2 \leq 2C_S^2\ba^2a_2\int_\H w_\tau^2\,d\xi
\leq 2C_S^2\ba^2a_2\left|v\right|_{2\ba}^{2\ba},
\end{align*}
finite because $2\ba=Q^{*}$. Letting $\tau\to\infty$, monotone convergence yields
$v \in L^{\ba Q^{*}}(\H)=L^{\frac{(Q^{*})^2}{2}}(\H)$.

\emph{Step 2: iteration with constants polynomial in $\ba$.}
Put
$b=C\left|v\right|^{Q^{*}-2}$ and $s=\frac{(Q^{*})^2}{2(Q^{*}-2)}$, so that
$(Q^{*}-2)s=\frac{(Q^{*})^2}{2}$ and $b \in L^{s}(\H)$ by Step 1. Moreover
$s>\frac{Q}{2}$: substituting $Q^{*}=\frac{2Q}{Q-2}$, the inequality
$(Q^{*})^2-QQ^{*}+2Q>0$ reduces to $\frac{8Q}{(Q-2)^2}>0$. Hence
$2s^{\prime}\in(2,Q^{*})$, where $s^{\prime}=\frac{s}{s-1}$, and there is
$\theta\in(0,1)$ with $\frac{1}{2s^{\prime}}=\frac{1-\theta}{2}+\frac{\theta}{Q^{*}}$.
From \eqref{eq:kbound}, $\left|k(\xi,v)\right| \leq C\left|v\right|+b\left|v\right|$,
so \eqref{eq:moserbasic} gives, for every $\ba\geq1$,
\begin{align*}
\left|w_\tau\right|_{Q^{*}}^2
\leq C_S^2\ba^2\left(C\left|w_\tau\right|_2^2
+\left|b\right|_{s}\left|w_\tau\right|_{2s^{\prime}}^2\right).
\end{align*}
By interpolation and Young's inequality, for every $\e>0$,
\begin{align*}
\left|w_\tau\right|_{2s^{\prime}}^2
\leq \left|w_\tau\right|_2^{2(1-\theta)}\left|w_\tau\right|_{Q^{*}}^{2\theta}
\leq \e\left|w_\tau\right|_{Q^{*}}^2
+C_\theta\,\e^{-\frac{\theta}{1-\theta}}\left|w_\tau\right|_2^2 .
\end{align*}
Choosing $\e=(2C_S^2\ba^2\left|b\right|_s)^{-1}$ and absorbing,
\begin{align*}
\left|w_\tau\right|_{Q^{*}}^2 \leq C_1\ba^{m}\left|w_\tau\right|_2^2,
\qquad m:=2+\frac{2\theta}{1-\theta},
\end{align*}
with $C_1$ depending only on $Q$, $C_S$, the constant of \eqref{eq:kbound} and
$\left|b\right|_s$, not on $\ba$ or $\tau$. Letting $\tau\to\infty$,
\begin{equation}{\label{eq:moseriter}}
\left|v\right|_{\ba Q^{*}} \leq \left(C_1\ba^{m}\right)^{\frac{1}{2\ba}}
\left|v\right|_{2\ba} \quad \text{ whenever } v \in L^{2\ba}(\H).
\end{equation}

Iterating \eqref{eq:moseriter} over $\ba_j=\left(\frac{Q^{*}}{2}\right)^{j}$, for
which $2\ba_{j+1}=\ba_jQ^{*}$ and $2\ba_1=Q^{*}$, the resulting product converges since
$\sum_j\ba_j^{-1}\left(\ln C_1+m\ln\ba_j\right)<\infty$, and letting $j\to\infty$ gives
$\left|v\right|_\infty \leq \La\left|v\right|_{Q^{*}}$ for some $\La<\infty$.
Interpolation with $v \in L^2(\H)$ completes the proof.
 \qed
	\end{proof}
\begin{co}{\label{cor:localmoser}}
Under the hypotheses of Lemma \ref{lemma:linf}, there exists $C>0$, independent of
$\xi_0 \in \H$, such that
$\left\|v\right\|_{L^\infty(B_1(\xi_0))} \leq C\left|v\right|_{L^{Q^{*}}(B_2(\xi_0))}$.
\end{co}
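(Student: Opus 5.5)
We localise the Moser iteration of Lemma \ref{lemma:linf} with cutoffs on shrinking gauge balls. By left invariance of $X_j,Y_j$, of the Lebesgue measure and of the gauge distance, we may first translate so that $\xi_0=0$. Translation changes $V$ to $V(\xi_0\circ\cdot)$, but the only property of the potential that enters is the bound \eqref{eq:kbound}, whose constant depends only on $C_f$, $\alpha$, $Q$ and $V_M$. All constants will therefore be independent of $\xi_0$.

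For radii $1\le r<R\le2$ we fix $\eta\in C^\infty_c(B_R)$ with $0\le\eta\le1$, $\eta\equiv1$ on $B_r$ and $\left|\nah\eta\right|\le C/(R-r)$. Such an $\eta$ can be taken to be a smooth function of the gauge, since $\left|\nah\varrho\right|\le1$. As test function we use $\varphi=\eta^2\left|v_\tau\right|^{2(\ba-1)}v$, and set $w_\tau=\eta\left|v_\tau\right|^{\ba-1}v$. The usual computation gives
\begin{align*}
\int_{\H}\left|\nah w_\tau\right|^2d\xi\le C\ba^2\Big(\int_{\H}\left|k(\xi,v)\right|\eta^2\left|v_\tau\right|^{2(\ba-1)}\left|v\right|d\xi+\frac{1}{(R-r)^2}\int_{B_R}\left|v_\tau\right|^{2(\ba-1)}v^2d\xi\Big).
\end{align*}
Here the cross term $2\eta\left|v_\tau\right|^{2(\ba-1)}v\,\nah v\cdot\nah\eta$ is handled by Young's inequality. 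We then use \eqref{eq:kbound} in the form $\left|k\right|\le(C+b)\left|v\right|$ with $b=C\left|v\right|^{Q^*-2}$, as in Step 2 of Lemma \ref{lemma:linf}.

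The key point is to avoid any dependence on the global norm $\left|b\right|_s$ beyond what is controlled by the data on $B_2(\xi_0)$. Since $v\in L^\infty(\H)$ by Lemma \ref{lemma:linf}, the simplest route is to bound $b\le C\left|v\right|_\infty^{Q^*-2}$. This makes the zero-order coefficient a constant $\Lambda_0$, independent of $\xi_0$ because $\left|v\right|_\infty$ is a fixed number for the given solution. The Folland--Stein inequality then yields the Caccioppoli--Sobolev step
\begin{align*}
\left|v\right|_{L^{\ba Q^*}(B_r)}\le\Big(\frac{C\ba^2}{(R-r)^2}\Big)^{\frac{1}{2\ba}}\left|v\right|_{L^{2\ba}(B_R)},
\end{align*}
after letting $\tau\to\infty$. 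We iterate along $\ba_j=(Q^*/2)^j$ and $r_j=1+2^{-j}$, starting from $2\ba_1=Q^*$ on $B_2$. The product $\prod_j(C\ba_j^2 4^{j})^{1/(2\ba_j)}$ converges because $\sum_j j\,\ba_j^{-1}<\infty$. Letting $j\to\infty$ gives $\left\|v\right\|_{L^\infty(B_1(\xi_0))}\le C\left|v\right|_{L^{Q^*}(B_2(\xi_0))}$.

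The main obstacle is the honesty of the constant. If one insists that $C$ depend only on structural data and not on $\left|v\right|_\infty$, then the coefficient $b$ must be controlled only through $\left|b\right|_{L^s(\H)}$, which is finite by Step 1 of Lemma \ref{lemma:linf}. In that case the absorption via interpolation, with $\e$ depending polynomially on $\ba$, must be redone with the cutoff present. I would first try exactly the Step 2 interpolation argument applied to $w_\tau$, which works verbatim because $\left|b\right|_{L^s(B_2(\xi_0))}\le\left|b\right|_{L^s(\H)}$ uniformly in $\xi_0$. The only new terms are the cutoff gradients, which contribute the factor $(R-r)^{-2}$ absorbed above. Either way the constant is uniform in $\xi_0$, which is what the corollary asserts.
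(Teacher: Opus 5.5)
Your proof is correct, and its skeleton is the paper's: translate to $\xi_0=0$, test with $\eta^2|v_\tau|^{2(\ba-1)}v$, combine the Caccioppoli estimate with Folland--Stein, and iterate over $\ba_j=(Q^*/2)^j$ on shrinking gauge balls.

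The difference is in how you treat the coefficient $b=C|v|^{Q^*-2}$. Your main route uses the full conclusion $v\in L^\infty(\H)$ of Lemma \ref{lemma:linf} to make $b$ a constant. Each step is then an immediate Caccioppoli--Sobolev estimate with factor $\ba^2(R-r)^{-2}$, and no absorption is needed. The paper uses only the integrability from Step 1, namely $b\in L^s(\H)$ with $s>Q/2$. It then repeats the Step 2 interpolation--absorption for $\eta w_\tau$, which is where its factor $\ba^m$ comes from; this is the fallback you sketch in your last paragraph.

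The paper's version is a self-contained local estimate that does not presuppose global boundedness. Yours is shorter but leans on Lemma \ref{lemma:linf} having already been proved. It has been, so there is no circularity when the corollary is used in Lemma \ref{lemma:decay}.

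One correction to your framing: the $|b|_{L^s(\H)}$ route is not more ``structural'' than yours. Indeed $|b|_{L^s(\H)}=C|v|_{(Q^*)^2/2}^{Q^*-2}$ depends on the solution just as $|v|_\infty$ does. In both versions $C$ depends on $v$ but not on $\xi_0$, which is all the corollary asserts and all that Lemma \ref{lemma:decay} needs.
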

\proof For $1\leq\rho^{\prime}<\rho\leq2$, pick $\eta \in C_c^\infty(B_\rho(\xi_0))$
with $\eta\equiv1$ on $B_{\rho^{\prime}}(\xi_0)$, $0\leq\eta\leq1$ and
$\left|\nah\eta\right| \leq \frac{C}{\rho-\rho^{\prime}}$. Testing \eqref{eq:P1} with
$\eta^2\left|v_\tau\right|^{2(\ba-1)}v$ and arguing as in \eqref{eq:moserbasic}, the
cross term $2\int_\H \eta\left|v_\tau\right|^{2(\ba-1)}v\,\nah v\cdot\nah\eta\, d\xi$
being handled by Young's inequality,
\begin{align*}
\int_\H \left|\nah(\eta w_\tau)\right|^2d\xi
\leq C\ba^2\left(\int_\H \left|\nah\eta\right|^2w_\tau^2\,d\xi
+\int_\H \eta^2(1+b)\,w_\tau^2\,d\xi\right),
\end{align*}
with $b=C\left|v\right|^{Q^{*}-2}$. The Folland--Stein inequality applied to
$\eta w_\tau$, H\"older with the exponent $s$ of Step 2 of Lemma \ref{lemma:linf} and
the same interpolation--absorption argument, now for $\eta w_\tau$, give, after
$\tau\to\infty$,
\begin{align*}
\left|v\right|_{L^{\ba Q^{*}}(B_{\rho^{\prime}}(\xi_0))}
\leq \left(\frac{C_1\ba^{m}}{(\rho-\rho^{\prime})^{2}}\right)^{\frac{1}{2\ba}}
\left|v\right|_{L^{2\ba}(B_{\rho}(\xi_0))},
\end{align*}
where $C_1$ depends only on $Q$, $C_S$, the constant of \eqref{eq:kbound} and
$\left|b\right|_{L^{s}(\H)}$, the latter finite by Lemma \ref{lemma:linf} and
independent of $\xi_0$. Iterating over $\ba_j=(\frac{Q^{*}}{2})^{j}$ and
$\rho_j=1+2^{-j+1}$, the product
$\prod_j\big(C_1\,4^{\,j}\ba_j^{m}\big)^{\frac{1}{2\ba_j}}$ converges, and the limit
argument of Lemma \ref{lemma:linf} yields the stated bound. Left translations preserve
$\nah$, the gauge balls and the Haar measure, so no constant depends on $\xi_0$. \qed
	\begin{lemma}{\label{lemma:decay}}
		Assume $(V_1)$ and $(V_2)$, together with $(f_1)$ and $(f_2)$, and let $v$ be as in Lemma \ref{lemma:linf}. Then $v \in \Ga^{2,\gamma}_{loc}(\H)$ for some $\gamma \in (0,1)$, and there exist $C,c>0$ with
		\begin{equation}{\label{eq:expdecay}}
			\left|v(\xi)\right| \leq C e^{-c\varrho(\xi)}, \qquad \xi \in \H.
		\end{equation}
		Moreover, for every $\xi_0 \in \H$,
		\begin{equation}{\label{eq:Tbound}}
			\left\|\nah v\right\|_{L^2(B_1(\xi_0))}+\left\|Tv\right\|_{L^2(B_1(\xi_0))}
			\leq C e^{-c\varrho(\xi_0)}.
		\end{equation}
	\end{lemma}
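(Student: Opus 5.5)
The argument has three steps. We first get local regularity from the $L^\infty$ bound. Then the local Moser estimate of Corollary \ref{cor:localmoser}, combined with a comparison argument, gives exponential decay. Finally a Caccioppoli-type inequality together with interior subelliptic estimates yields \eqref{eq:Tbound}.

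\emph{Regularity.} By Lemma \ref{lemma:linf}, $v\in L^\infty(\H)$. Then \eqref{eq:kbound} shows that $h:=k(\xi,v)$ is bounded. The subelliptic $L^p$ estimates of Folland \cite{folland1975subelliptic} for $\Delta_{\mathbb{H}}$ give $v\in S^p_{2,loc}$ for every $p<\infty$, and the Folland--Stein embedding then gives $v\in\Ga^{1,\gamma}_{loc}$. Since $V$ is continuous, we must be careful: we do not have H\"older continuity of $V$, so full $\Ga^{2,\gamma}$ regularity needs $h$ to be H\"older, which follows only if $V$ is. I would read the statement under the implicit assumption that $V$ is locally H\"older (or at least treat $\Ga^{1,\gamma}_{loc}$ plus $S^p_{2,loc}$ as what is used later). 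With that, Schauder estimates for $\Delta_{\mathbb{H}}$ give $v\in\Ga^{2,\gamma}_{loc}$. All constants in these local estimates are uniform under left translations.

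\emph{Decay to zero and comparison.} Since $v\in L^{Q^*}(\H)$, we have $|v|_{L^{Q^*}(B_2(\xi_0))}\to0$ as $\varrho(\xi_0)\to\infty$. Corollary \ref{cor:localmoser} then gives $v(\xi)\to0$ uniformly at infinity. Choose $R_0$ so that $|g(v)|\le|v|\le\eta_0$ for $\varrho\ge R_0$. By $(f_2)$ and $(g_3)$, on this region
\[
v\,k(\xi,v)=g'(v)v\,[f(g(v))g(v)-V g(v)^2]/g(v)\le -(V_0-\ell)\,g'(v)v\,g(v)\le -c_0 v^2,
\]
with $c_0>0$, because $g'(v)\ge g'(\eta_0)$ and $g(v)v\ge g(1)v^2$ for small $|v|$. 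Hence $w=v^2$ satisfies $-\Delta_{\mathbb{H}}w=-2|\nah v|^2+2v\,k\le -2c_0w$ weakly on $\{\varrho>R_0\}$. For the comparison function I would take $\phi=Ce^{-c\varrho}$. Using $|\nah\varrho|\le1$ (Kor\'anyi gauge) and $\Delta_{\mathbb{H}}\varrho=(Q-1)|\nah\varrho|^2/\varrho$, we get $\Delta_{\mathbb{H}}\phi\le c^2\phi$ for $\varrho\ge1$. So for $c^2<2c_0$, $\phi$ is a supersolution of $-\Delta_{\mathbb{H}}\phi+2c_0\phi\ge0$. Pick $C$ so that $\phi\ge w$ on $\{\varrho=R_0\}$. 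The weak maximum principle on $\{R_0<\varrho<R\}$, applied to $(w-\phi)^+$ and combined with $w\to0$ at infinity (let $R\to\infty$), gives $w\le\phi$, which is \eqref{eq:expdecay} after halving $c$. This step is the main obstacle. The delicate points are that $\varrho$ is not smooth at the centre and that $w$ is only a weak subsolution, so the maximum principle must be justified by testing with $(w-\phi)^+\chi_R$ and using the decay of $v$ in $S^2_1$.

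\emph{Gradient and $T$ bounds.} Test \eqref{eq:P1} with $\eta^2v$, where $\eta$ is a cutoff with $\eta\equiv1$ on $B_1(\xi_0)$ and support in $B_2(\xi_0)$. This gives $\|\nah v\|_{L^2(B_1)}\le C\|v\|_{L^2(B_2)}+C\|k\|_{L^2(B_2)}\le Ce^{-c\varrho(\xi_0)}$, using $|k|\le C|v|$ and \eqref{eq:expdecay}, because $\varrho$ is comparable along $B_2(\xi_0)$ by the quasi-triangle inequality. For $T=-\tfrac14[X_1,Y_1]$, the interior $S^2_2$ estimate on translated unit balls, with constant independent of $\xi_0$, gives
\[
\|Tv\|_{L^2(B_1)}\le C\|X_1Y_1v\|_{L^2(B_1)}+C\|Y_1X_1v\|_{L^2(B_1)}\le C\big(\|\Delta_{\mathbb{H}}v\|_{L^2(B_2)}+\|v\|_{L^2(B_2)}\big).
\]
Since $\Delta_{\mathbb{H}}v=-k$, this is at most $Ce^{-c\varrho(\xi_0)}$, which completes \eqref{eq:Tbound}.
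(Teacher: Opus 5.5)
Your proposal is correct and follows essentially the paper's route. The paper likewise gets subelliptic regularity from $v\in L^\infty(\H)$ and uniform vanishing at infinity from Corollary~\ref{cor:localmoser}. It then compares with the barrier $Me^{-c\varrho}$, using the radial formula for $\Delta_{\mathbb{H}}h(\varrho)$ and a weak comparison on $\{\varrho>\varrho_0\}$, and finishes with the interior subelliptic $L^2$ estimate for $\nah v$ and $Tv$ on translated balls. There are two small differences. You compare the subsolution $v^2$ of $-\Delta_{\mathbb{H}}w+2c_0w\leq 0$ with the barrier, which handles both signs at once at the cost of halving $c$, whereas the paper compares $v$ and $-v$ with the barrier separately. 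You also get the horizontal gradient bound from a Caccioppoli estimate rather than from the subelliptic estimate.

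Your caveat on $\Ga^{2,\gamma}_{loc}$ is well founded. The paper justifies the Schauder step only by the local H\"older continuity of $s\mapsto k(\xi,s)$, but $\xi\mapsto k(\xi,v(\xi))$ is no more regular than $V$, which $(V_1)$ makes merely continuous. The exponential decay and the bound on $\nah v$ and $Tv$ need only the $S^p_{2,loc}$ (hence $\Ga^{1,\gamma}_{loc}$) regularity you establish.
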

	\begin{proof}
		Since $v \in L^\infty(\H)$ and $s \mapsto k(\xi,s)$ is locally H\"older continuous, the subelliptic Schauder estimates \cite{folland1975subelliptic} give $v \in \Ga^{2,\gamma}_{loc}(\H)$. By Corollary \ref{cor:localmoser},
        		\begin{align*}
			\left\|v\right\|_{L^\infty(B_1(\xi_0))} \leq C \left|v\right|_{L^{Q^{*}}(B_2(\xi_0))},
		\end{align*}
		with $C$ independent of $\xi_0$; since $v \in L^{Q^{*}}(\H)$, the right-hand side tends to $0$ as $\varrho(\xi_0) \to \infty$, so $v(\xi) \to 0$ at infinity.

		We now compare with an exponential barrier. Put $\Psi=\left|\nah \varrho\right|^2$. A direct computation from $\varrho^4=t^2+\left|\zeta\right|^4$ gives
		\begin{align*}
			X_i\varrho=\frac{ty_i+\left|\zeta\right|^2x_i}{\varrho^3}, \qquad
			Y_i\varrho=\frac{\left|\zeta\right|^2y_i-tx_i}{\varrho^3}, \qquad
			\Psi=\frac{\left|\zeta\right|^2}{\varrho^2}\in[0,1],
		\end{align*}
		and hence, for $h \in C^2((0,\infty))$,
		\begin{equation}{\label{eq:radialLap}}
			\Delta_{\mathbb{H}}\left(h(\varrho)\right)
			=\Psi\left(h^{\prime\prime}(\varrho)+\frac{Q-1}{\varrho}h^{\prime}(\varrho)\right)
			\qquad \text{on } \H \setminus \{0\},
		\end{equation}
		which is consistent with $\Delta_{\mathbb{H}}\varrho^{2-Q}=0$.

		We claim that
		\begin{equation}{\label{eq:keta}}
			k(\xi,s)\,s \leq -\delta s^2 \quad \text{ for all } \xi \in \H \text{ and } \left|s\right| \leq \eta, 
		\end{equation}
		where $\delta:=\frac{(V_0-\ell)\,g(1)^2}{2\alpha}>0 \text{ and } \eta \in (0,1]$ is chosen with $\left|g(s)\right| \leq \eta_0$ for $\left|s\right| \leq \eta$. Indeed, let $0<s\leq\eta$. Then $g(s)>0$ and $(f_2)$ gives $f(g(s)) \leq \ell\, g(s)$, so by $(V_1)$,
		\begin{align*}
			f(g(s))-V(\xi)g(s) \leq (\ell-V_0)\,g(s)<0 .
		\end{align*}
		By $(g_3)$, we have $s\,g^{\prime}(s) \geq \frac{g(s)}{2\alpha}>0$, and by $(g_6)$, $g(s) \geq g(1)s$. Then,
		\begin{align*}
			k(\xi,s)s = s\,g^{\prime}(s)\left[f(g(s))-V(\xi)g(s)\right] \leq \frac{g(s)}{2\alpha}(\ell-V_0)g(s) \leq -\delta s^2 .
		\end{align*}
		For $-\eta \leq s<0$ the same arguments applies, since $g(s)<0$, the inequality $f(g(s)) \geq \ell\,g(s)$ from $(f_2)$ then gives $f(g(s))-V(\xi)g(s) \geq (\ell-V_0)g(s)>0$, and $s\,g^{\prime}(s) \leq \frac{g(s)}{2\alpha}<0$ by $(g_3)$. This proves \eqref{eq:keta}.

		As $v$ is continuous and vanishes at infinity, choose $\varrho_0>0$ with $\left|v\right| \leq \eta$ on $\Omega_0:=\{\varrho>\varrho_0\}$, and put
		\begin{align*}
			h(\varrho)=Me^{-c\varrho}, \quad c^2 \leq \delta, \quad M=\left\|v\right\|_\infty e^{c\varrho_0},
		\end{align*}
		so that $h \geq \left\|v\right\|_\infty$ on $\{\varrho \leq \varrho_0\}$. By \eqref{eq:radialLap},
		\begin{equation}{\label{eq:barrier}}
			-\Delta_{\mathbb{H}}h+\delta h = h\left(\delta-\Psi c^{2}+\Psi\,\frac{c(Q-1)}{\varrho}\right) \geq 0 \quad \text{on } \Omega_0,
		\end{equation}
		the inequality using only $0 \leq \Psi \leq 1$ and $c^2 \leq \delta$.

		Set $w=(v-h)^{+}$, extended by $0$ outside $\Omega_0$. Since $\left|v\right| \leq \left\|v\right\|_\infty \leq h$ on $\partial \Omega_0$, $w$ vanishes there. Moreover, $0 \leq w \leq \left|v\right|$ and $\nah w = \left(\nah v-\nah h\right)\chi_{\{v>h\}}$, so $w \in S^2_1(\H)$. On $\{w>0\}$, we have $0<v \leq \eta$, so \eqref{eq:keta} gives $k(\xi,v) \leq -\delta v$, that is, $-\Delta_{\mathbb{H}}v+\delta v \leq 0$ there. Subtracting \eqref{eq:barrier} and testing with $w$,
		\begin{align*}
			\int_{\H}\left|\nah w\right|^2 d\xi + \delta\int_{\H} w^2 d\xi \leq 0,
		\end{align*}
		so $w \equiv 0$, that is, $v \leq h$ on $\Omega_0$. The same argument applied to $(-v-h)^{+}$, using the second half of \eqref{eq:keta}, gives $-v \leq h$. Hence $\left|v\right| \leq h$ on $\Omega_0$, and the choice of $M$ extends this to all of $\H$, which is \eqref{eq:expdecay}.

		Finally, $\left|\Delta_{\mathbb{H}}v\right|=\left|k(\xi,v)\right| \leq C\left|v\right|$ by \eqref{eq:kbound} and $v \in L^\infty(\H)$, so the interior subelliptic estimate on $B_1(\xi_0) \Subset B_2(\xi_0)$ \cite{folland1975subelliptic},
		\begin{align*}
			\left\|\nah v\right\|_{L^2(B_1(\xi_0))}+\left\|Tv\right\|_{L^2(B_1(\xi_0))}
			\leq C\left(\left\|\Delta_{\mathbb{H}}v\right\|_{L^2(B_2(\xi_0))}
			+\left\|v\right\|_{L^2(B_2(\xi_0))}\right),
		\end{align*}
		together with \eqref{eq:expdecay}, yields \eqref{eq:Tbound}. \qed
	\end{proof}

	\begin{co}{\label{cor:Dauto}}
		Under the hypotheses of Lemma \ref{lemma:decay},
		\begin{align*}
			\lim_{R \to \infty} R\int_{A_R}\left|Tv\right|\left|\nah v\right|d\xi = 0 .
		\end{align*}
	\end{co}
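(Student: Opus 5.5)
The plan is to cover the annulus $A_R$ by gauge balls of radius $1$ centred at points of $A_R$ and then apply \eqref{eq:Tbound} on each of them. By Cauchy--Schwarz on each unit ball,
\begin{align*}
\int_{B_1(\xi_0)}\left|Tv\right|\left|\nah v\right|d\xi \leq \left\|Tv\right\|_{L^2(B_1(\xi_0))}\left\|\nah v\right\|_{L^2(B_1(\xi_0))} \leq C^2 e^{-2c\varrho(\xi_0)} .
\end{align*}
So everything reduces to counting the balls needed to cover $A_R$ and controlling the gauge of their centres.

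For the covering, take a maximal family of points $\{\xi_j\}\subset A_R$ whose mutual gauge distances $\left|\xi_j^{-1}\circ\xi_k\right|_{\mathbb{H}}$ are at least $1$. The balls $B_1(\xi_j)$ then cover $A_R$. The balls $B_{1/(2C_0)}(\xi_j)$ are pairwise disjoint, where $C_0$ is the quasi-triangle constant of the Korányi gauge (it is actually a genuine distance, so $C_0=1$). These smaller balls are also contained in $\{\varrho<2R+1\}$. Left translations preserve Haar measure and $|B_r|=r^Q|B_1|$, so comparing volumes gives at most $C(2R+1)^Q$ balls. Each centre satisfies $\varrho(\xi_j)>R$, hence
\begin{align*}
R\int_{A_R}\left|Tv\right|\left|\nah v\right|d\xi \leq R\sum_j \int_{B_1(\xi_j)}\left|Tv\right|\left|\nah v\right|d\xi \leq C R(2R+1)^Q e^{-2cR}\longrightarrow 0 .
\end{align*}

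The main point to check is that \eqref{eq:Tbound} holds with constants $C,c$ independent of $\xi_0$, which Lemma \ref{lemma:decay} asserts. This rests on the interior subelliptic estimate being left-invariant, and on $\left\|\Delta_{\mathbb{H}}v\right\|_{L^2(B_2(\xi_0))}+\left\|v\right\|_{L^2(B_2(\xi_0))}\leq C e^{-c\varrho(\xi_0)}$. The latter follows from \eqref{eq:expdecay}, because points of $B_2(\xi_0)$ have gauge at least $\varrho(\xi_0)-2$ by the triangle inequality.

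The only remaining care is that the covering count uses the gauge triangle inequality and homogeneity of Haar measure. Both are standard, so I expect no real obstacle beyond bookkeeping. In fact any polynomial bound on the number of balls suffices, since it is dominated by the exponential factor.
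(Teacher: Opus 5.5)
Your proposal is correct and is essentially the paper's proof: cover $A_R$ by $O(R^{Q})$ unit gauge balls whose centres have gauge larger than $R$, apply Cauchy--Schwarz together with \eqref{eq:Tbound} on each ball, and let $e^{-2cR}$ absorb the polynomial factor $R^{Q+1}$. Your maximal $1$-separated family and your use of the genuine triangle inequality for the Kor\'anyi gauge fill in the covering count that the paper states without proof, and they also show that bounded overlap is not needed for this one-sided estimate.
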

	\begin{proof}
		Cover $A_R$ by $C R^{Q}$ gauge balls of radius $1$ with centres in $\{\varrho > R\}$ and bounded overlap. By Cauchy--Schwarz on each ball and \eqref{eq:Tbound},
		\begin{align*}
			R\int_{A_R}\left|Tv\right|\left|\nah v\right|d\xi
			\leq C R^{Q+1}e^{-2cR} \longrightarrow 0 \text{ as } R\to \infty. \qed
		\end{align*}
	\end{proof}

	\begin{prop}{\label{prop:poho}}
		Assume $(V_1)$ and $(V_3)$, together with $(f_1)$ and $(f_2)$, and let $u=g(v)$ with $v \in S^2_1(\H)$ a weak solution of \eqref{eq:P1}. Then
		\begin{equation}{\label{eq:poho}}
			\frac{Q-2}{2}\int_\H \left(1+2\alpha \left|u\right|^{4\alpha-2}\right)\left|\nah u\right|^2 d\xi = Q\int_\H \left(F(u)-\frac{1}{2}V(\xi)u^2\right)d\xi - \frac{1}{2}\int_\H \left(ZV\right)u^2 d\xi.
		\end{equation}
	\end{prop}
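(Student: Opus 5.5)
We will work with the transformed equation \eqref{eq:P1} for $v$ and test it against $\chi_R\,Zv$, then return to $u=g(v)$ at the end. By Lemma \ref{lemma:decay}, $v\in\Ga^{2,\gamma}_{loc}(\H)$, so $Zv$ is locally in $S^2_1$. Also $\chi_R Zv$ is compactly supported, so the weak formulation may be applied to it (after a standard approximation using the local Schauder regularity). Write the right-hand side as $k(\xi,v)=\partial_s G(\xi,s)|_{s=v}$, where
\begin{align*}
G(\xi,s)=F(g(s))-\tfrac12 V(\xi)g(s)^2 .
\end{align*}
The identity then reads
\begin{align*}
\int_\H \nah v\cdot\nah(\chi_R Zv)\,d\xi=\int_\H \chi_R\,k(\xi,v)\,Zv\,d\xi .
\end{align*}

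\emph{Left-hand side.} By Lemma \ref{lemma:Zcomm}(ii), $\nah(Zv)=Z(\nah v)+\nah v$, so
\begin{align*}
\nah v\cdot\nah(Zv)=\tfrac12 Z\left|\nah v\right|^2+\left|\nah v\right|^2 .
\end{align*}
Integrating by parts with Lemma \ref{lemma:Zcomm}(i) gives
\begin{align*}
\int \chi_R\,\tfrac12 Z|\nah v|^2=-\tfrac12\int |\nah v|^2(Z\chi_R+Q\chi_R).
\end{align*}
The left side therefore equals
\begin{align*}
\left(1-\tfrac Q2\right)\int\chi_R|\nah v|^2-\tfrac12\int |\nah v|^2 Z\chi_R+\int (Zv)\,\nah v\cdot\nah\chi_R .
\end{align*}
The term with $Z\chi_R$ is bounded by $C\int_{A_R}|\nah v|^2$, since $|Z\chi_R|\le C$ (as $\chi_R$ depends on $\varrho/R$ and $Z\varrho=\varrho$). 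This tends to $0$.

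For the last term, $|\nah\chi_R|\le C/R$ on $A_R$. Use Lemma \ref{lemma:Zcomm}(iii): $Zv=\sum(x_iX_iv+y_iY_iv)+2tTv$. On $A_R$ we have $|\zeta|\le 2R$ and $|t|\le 4R^2$. The horizontal part is therefore bounded by $C\int_{A_R}|\nah v|^2\to0$, and the $T$-part by $CR\int_{A_R}|Tv||\nah v|\to0$ by Corollary \ref{cor:Dauto}. This control of the non-horizontal part $2tT$ is the main obstacle, and it is exactly what the exponential decay \eqref{eq:Tbound} was prepared for.

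\emph{Right-hand side.} Apply the chain rule:
\begin{align*}
Z[G(\xi,v)]=(Z_\xi G)(\xi,v)+k(\xi,v)Zv, \qquad Z_\xi G=-\tfrac12 (ZV)g(v)^2 .
\end{align*}
Hence
\begin{align*}
\int\chi_R k Zv=\int \chi_R Z[G(\xi,v)]+\tfrac12\int\chi_R (ZV)g(v)^2 .
\end{align*}
Integrating by parts once more turns the first term into $-\int G(\xi,v)(Z\chi_R+Q\chi_R)$. Now let $R\to\infty$, using dominated convergence:
\begin{itemize}
\item $G(\xi,v)\in L^1$ by $(f_1)$ and $(g_4)$, $(g_5)$;
\item $V$ is bounded where needed. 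If $V$ is unbounded under $(V_3)$ only, note that $g(v)^2V$ is integrable because of the exponential decay together with the growth allowed by $ZV\ge-\kappa V$. Otherwise pass to the limit by monotone convergence using $(ZV)\ge -\kappa V$.
\end{itemize}
The limit identity is
\begin{align*}
\frac{Q-2}{2}\int|\nah v|^2=Q\int\Big(F(g(v))-\tfrac12Vg(v)^2\Big)-\tfrac12\int (ZV)g(v)^2 .
\end{align*}

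Finally, convert back to $u$. Since $\nah u=g'(v)\nah v$ and $g'(v)^2(1+2\alpha|g(v)|^{4\alpha-2})=1$, we have
\begin{align*}
|\nah v|^2=(1+2\alpha|u|^{4\alpha-2})|\nah u|^2 .
\end{align*}
Substituting this into the limit identity yields \eqref{eq:poho}.

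The hypotheses of Lemmas \ref{lemma:linf}--\ref{lemma:decay} ask for $(V_2)$ only to ensure $V\le V_M$. Under $(V_3)$ we would either assume boundedness implicitly or note that the local Moser and barrier arguments need only $V\ge V_0$ on the region $|v|\le\eta$, plus local boundedness of $V$. I will check this point but expect it to be harmless.
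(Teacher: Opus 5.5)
Your proposal is correct and follows the paper's proof essentially step for step. You test \eqref{eq:P1} with $\chi_R\,Zv$, use $\nah(Zv)=Z(\nah v)+\nah v$ and $\operatorname{div}Z=Q$ on both sides, and kill $\int_\H (Zv)\,\nah v\cdot\nah\chi_R\,d\xi$ by splitting $Z$ into its horizontal part and $2tT$, the latter via Corollary \ref{cor:Dauto}. Your use of $ZV\geq-\kappa V$ with Fatou/monotone convergence to pass to the limit in $\int_\H\chi_R(ZV)g^2(v)\,d\xi$ also fills a step the paper leaves implicit.

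The one loose end is your aside on unbounded $V$: $(V_3)$ is only a lower bound on the growth of $V$ along dilation orbits and gives no upper bound, so it cannot yield integrability of $Vg^2(v)$. This is moot, since the paper's proof equally relies on Lemmas \ref{lemma:linf} and \ref{lemma:decay}, and hence implicitly on $V\leq V_M$.
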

	\begin{proof}
		Recall from Section 2 that $v$ solves $-\Delta_{\mathbb{H}}v = k(\xi,v)$ with
		\begin{align*}
			k(\xi,s)=g^\prime(s)\left[f(g(s))-V(\xi)g(s)\right], \quad K(\xi,s)= \int_0^s k(\xi,\tau)d\tau = F(g(s))-\frac{1}{2}V(\xi)g^2(s),
		\end{align*}
		and that $\left|\nah v\right|^2 = \left(1+2\alpha\left|u\right|^{4\alpha-2}\right)\left|\nah u\right|^2$. Since the gauge is $\delta_\theta$-homogeneous of degree one, $Z\varrho=\varrho$, so $\left\|Z\chi_R\right\|_\infty \leq 2\left\|\chi^\prime\right\|_\infty$ uniformly in $A_R$; and $\left|\nah \varrho\right| \leq 1$ gives $\left|\nah \chi_R\right| \leq C/R$.

		Testing the equation with $\chi_R\, Zv$ and expanding,
		\begin{align*}
			\int_\H \chi_R\, \nah v \cdot \nah (Zv)\,d\xi + \int_\H (Zv)\,\nah v \cdot \nah \chi_R \,d\xi = \int_\H \chi_R\, k(\xi,v)\,Zv\, d\xi.
		\end{align*}
		By Lemma~\ref{lemma:Zcomm}(ii),
		\begin{align*}
			\nah v \cdot \nah (Zv) = \frac{1}{2}Z\left(\left|\nah v\right|^2\right)+\left|\nah v\right|^2,
		\end{align*}
		and Lemma~\ref{lemma:Zcomm}(i) turns the first integral into
		\begin{align*}
			-\frac{Q-2}{2}\int_\H \chi_R \left|\nah v\right|^2 d\xi - \frac{1}{2}\int_\H \left|\nah v\right|^2 Z\chi_R\, d\xi.
		\end{align*}
		On the right-hand side, $Z\left[K(\xi,v)\right]= -\frac{1}{2}(ZV)g^2(v)+k(\xi,v)Zv$, so again by Lemma~\ref{lemma:Zcomm}(i),
		\begin{align*}
			\int_\H \chi_R\, k(\xi,v)Zv\,d\xi = -Q\int_\H \chi_R K(\xi,v)d\xi - \int_\H K(\xi,v)Z\chi_R\,d\xi + \frac{1}{2}\int_\H \chi_R (ZV)g^2(v)d\xi.
		\end{align*}
		Since $v \in S^2_1(\H)$ we have $\left|\nah v\right|^2 \in L^1(\H)$, and $K(\xi,v) \in L^1(\H)$ by Lemma \ref{lemma:linf} together with the exponential decay of Lemma \ref{lemma:decay}. As $Z\chi_R$ is uniformly bounded and supported in $A_R$, the two terms carrying $Z\chi_R$ tend to $0$ as $R \to \infty$.

	At the end, consider,  $\mathcal{E}_R = \int_\H (Zv)\,\nah v \cdot \nah \chi_R \,d\xi$ and split $Z$ as in Lemma~\ref{lemma:Zcomm}(iii). On $A_R$ we have $\left|\zeta\right| \leq \varrho \leq 2R$, so the horizontal part is bounded by
		\begin{align*}
			C\,R \cdot \frac{C}{R}\int_{A_R}\left|\nah v\right|^2 d\xi = C\int_{A_R}\left|\nah v\right|^2d\xi \longrightarrow 0 \text{ as } R\to \infty.
		\end{align*}
		For the central part, $\left|t\right| \leq \varrho^2 \leq 4R^2$ gives the bound $CR\int_{A_R}\left|Tv\right|\left|\nah v\right|d\xi$, which tends to $0$ as $R\to \infty$ by Corollary \ref{cor:Dauto}. Letting $R \to \infty$,
		\begin{align*}
			\frac{Q-2}{2}\int_\H \left|\nah v\right|^2 d\xi = Q\int_\H K(\xi,v)d\xi - \frac{1}{2}\int_\H (ZV)g^2(v)d\xi,
		\end{align*}
		which is \eqref{eq:poho} after substituting $u=g(v)$. \qed
	\end{proof}

	\begin{lemma}{\label{lemma:nehari}}
		Under the hypotheses of Proposition~\ref{prop:poho}, 
		\begin{equation}{\label{eq:nehari}}
			\int_\H \left|\nah u\right|^2 d\xi + 4\alpha^2 \int_\H \left|u\right|^{4\alpha-2}\left|\nah u\right|^2 d\xi + \int_\H V(\xi)u^2 d\xi = \int_\H f(u)u\, d\xi.
		\end{equation}
	\end{lemma}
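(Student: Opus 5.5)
The identity \eqref{eq:nehari} should come from testing \eqref{eq:P1} with the single function
\begin{align*}
	w = \frac{g(v)}{g^{\prime}(v)},
\end{align*}
which is the test function $u\cdot(1+2\alpha\left|u\right|^{4\alpha-2})^{1/2}$ written in the $v$ variable. With this choice the right-hand side of the weak formulation becomes exactly $\int_\H \left[f(u)-V(\xi)u\right]u\,d\xi$. The plan is therefore: check that $w$ is admissible, compute $\nah w$, and recognise the left-hand side as the quasilinear energy with coefficient $4\alpha^2$.

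\emph{Admissibility.} By $(g_3)$, $\left|g(s)/g^{\prime}(s)\right| \leq 2\alpha\left|s\right|$, so $w \in L^2(\H)$. For the gradient, $\nah w = \left(1-\frac{g g^{\prime\prime}}{(g^{\prime})^2}\right)(v)\,\nah v$. Differentiating the defining relation $(g^{\prime})^{-2}=1+2\alpha\left|g\right|^{4\alpha-2}$ gives
\begin{align*}
	g^{\prime\prime}=-2\alpha(2\alpha-1)\left|g\right|^{4\alpha-4}g\,(g^{\prime})^4,
\end{align*}
hence
\begin{align*}
	1-\frac{gg^{\prime\prime}}{(g^{\prime})^2}=1+2\alpha(2\alpha-1)\left|g\right|^{4\alpha-2}(g^{\prime})^2 .
\end{align*}
By $(g_5)$, $\left|g\right|^{4\alpha-2}(g^{\prime})^2\leq \frac{1}{2\alpha}$, so this factor lies in $[1,2\alpha]$, using $\alpha>\frac12$. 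Thus $\left|\nah w\right|\leq 2\alpha\left|\nah v\right|$ and $w\in S^2_1(\H)$. I would also verify that every term of the identity is finite. By $(f_1)$, $(g_4)$ and $(g_5)$, $\left|f(g(v))g(v)\right|\leq C(v^2+\left|v\right|^{Q^{*}})\in L^1$. For the potential term, $V g(v)g^{\prime}(v)w = V g^2(v)$, which is the same integrability the weak formulation already requires for the test function $v$. If $V$ is not bounded, this is the one point needing care; I would handle it with a truncation $w_n=\chi_n w$ and let $n\to\infty$ by monotone and dominated convergence, the cut-off errors vanishing as in the proof of Proposition~\ref{prop:poho}.

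\emph{Computation.} Inserting $w$ into $\langle I^{\prime},\cdot\rangle=0$ gives
\begin{align*}
	\int_\H \left(1+2\alpha(2\alpha-1)\left|g(v)\right|^{4\alpha-2}g^{\prime}(v)^2\right)\left|\nah v\right|^2 d\xi = \int_\H \left[f(g(v))-V(\xi)g(v)\right]g(v)\,d\xi .
\end{align*}
Now substitute $\left|\nah v\right|^2=\left|\nah u\right|^2/g^{\prime}(v)^2 = (1+2\alpha\left|u\right|^{4\alpha-2})\left|\nah u\right|^2$. The left integrand becomes
\begin{align*}
	\left(1+2\alpha\left|u\right|^{4\alpha-2}+2\alpha(2\alpha-1)\left|u\right|^{4\alpha-2}\right)\left|\nah u\right|^2=\left(1+4\alpha^2\left|u\right|^{4\alpha-2}\right)\left|\nah u\right|^2 ,
\end{align*}
which is \eqref{eq:nehari}.

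The step most likely to need care is the admissibility and integrability check. The algebra itself is routine once $g^{\prime\prime}$ is expressed through $g$ and $g^{\prime}$.
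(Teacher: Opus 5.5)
Your proof is correct, but it reaches \eqref{eq:nehari} by a different, though equivalent, route from the paper's.

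The paper tests the quasilinear equation \eqref{eq:Pf} directly with $u$. It integrates the quasilinear term by parts to obtain $\int_\H \left|\nah\left(\left|u\right|^{2\alpha}\right)\right|^2 d\xi = 4\alpha^2\int_\H \left|u\right|^{4\alpha-2}\left|\nah u\right|^2 d\xi$, and cites Lemmas \ref{lemma:linf} and \ref{lemma:decay} for the admissibility of $u$.

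You stay in the dual variable and test \eqref{eq:P1} with $g(v)/g^{\prime}(v)$, the same function the paper uses for Cerami sequences in Lemma \ref{lemma:l4}. The weak form of \eqref{eq:P1} tested against $w$ coincides with the weak form of \eqref{eq:Pf} tested against $g^{\prime}(v)w$, because the $g^{\prime\prime}$ terms cancel. So your test function is exactly the pullback of $u$, and the constant $4\alpha^2=2\alpha+2\alpha(2\alpha-1)$ comes out of the chain-rule algebra rather than from $\left|\nah\left(\left|u\right|^{2\alpha}\right)\right|^2$.

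What your version buys is economy of input and rigour. It uses only $v\in S^2_1(\H)$, the pointwise bounds $(g_3)$--$(g_5)$ and $(f_1)$, and it works in the weak formulation the hypotheses actually provide, namely a weak solution of \eqref{eq:P1}. The paper's one-line argument instead presupposes a weak formulation of \eqref{eq:Pf} in the $u$ variable. It also relies on the $L^\infty$ bound and decay of Section 3, which themselves use $V\leq V_M$, to justify testing with $u$. Your truncation step additionally covers potentials unbounded above, which $(V_1)$ and $(V_3)$ alone do not exclude.

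The paper's version is shorter and shows more transparently where the coefficient $4\alpha^2$ comes from.
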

    \begin{proof}
		Testing \eqref{eq:Pf} with $u$, the quasilinear term integrates by parts as
		\begin{align*}
			-\int_\H \Delta_{\mathbb{H}}\left(\left|u\right|^{2\alpha}\right)\left|u\right|^{2\alpha-2}u\cdot u \,d\xi = \int_\H \left|\nah \left(\left|u\right|^{2\alpha}\right)\right|^2 d\xi = 4\alpha^2 \int_\H \left|u\right|^{4\alpha-2}\left|\nah u\right|^2 d\xi,
		\end{align*}
		and $u$ is an admissible test function by Lemma \ref{lemma:linf} and Lemma \ref{lemma:decay}. \qed
	\end{proof}

	\section{Technical lemmas}
	This section collects the material used in the proof of Theorem \ref{theorem:T}: an integrability lemma for $g(v)$, the decomposition of $I_\lambda$ that isolates the critical term, and the mountain pass geometry.

	\begin{lemma}{\label{lemma:lg}}
		Let $v \in S^2_1(\H)$ and set $w=\left|g(v)\right|^{2\alpha}$. Then $w \in S^2_1(\H)$ with $\|w\|\leq (2\alpha)^{\frac{1}{2}}\|v\|$, and
		\begin{align*}
			\int_\H \left|g(v)\right|^{r} d\xi \leq C\|v\|^{\frac{r}{2\alpha}}, \quad \text{ for every } r \in [4\alpha, 2\alpha Q^{*}].
		\end{align*}
	\end{lemma}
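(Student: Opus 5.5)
The plan is to reduce everything to the two pointwise bounds on $g$ in Lemma \ref{lemma:l1} and then apply the Folland--Stein embedding to $w$. Put $G(s)=\left|g(s)\right|^{2\alpha}$, so that $w=G(v)$. Since $\alpha>\frac12$, $G\in C^1(\R)$ with $G(0)=0$ and
\begin{align*}
G^{\prime}(s)=2\alpha\left|g(s)\right|^{2\alpha-1}g^{\prime}(s)\,\mathrm{sgn}(s).
\end{align*}
By the second half of $(g_5)$ this gives $\left|G^{\prime}(s)\right|\leq 2\alpha/\sqrt{2\alpha}=(2\alpha)^{\frac12}$. Hence $G$ is globally Lipschitz with constant $(2\alpha)^{\frac12}$. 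The second inequality in $(g_4)$ gives the pointwise bound $\left|G(s)\right|\leq(2\alpha)^{\frac12}\left|s\right|$.

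\emph{Step 1: $w\in S^2_1(\H)$ and the chain rule.} I first take $v\in C_c^\infty(\H)$, where $w=G(v)$ is $C^1$ with compact support. There the classical chain rule gives $\nah w=G^{\prime}(v)\nah v$, since $X_j,Y_j$ are first-order derivations. For general $v\in S^2_1(\H)$, I choose $v_n\in C_c^\infty(\H)$ with $v_n\to v$ in $S^2_1(\H)$, which is possible by the density of smooth compactly supported functions in the Folland--Stein space. Passing to a subsequence, $v_n\to v$ a.e. The Lipschitz bound gives $G(v_n)\to G(v)$ in $L^2(\H)$. Moreover $\nah G(v_n)=G^{\prime}(v_n)\nah v_n$ is bounded in $L^2$, and it converges in $L^2$ to $G^{\prime}(v)\nah v$ by dominated convergence together with continuity of $G^{\prime}$. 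Testing against $C_c^\infty$ vector fields then identifies $\nah w=G^{\prime}(v)\nah v$ in the distributional sense. This approximation is the only step needing any care; I expect it to be the main, though routine, obstacle.

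\emph{Step 2: the norm bound.} From Step 1 and the bound on $G^{\prime}$, $\left|\nah w\right|^2\leq 2\alpha\left|\nah v\right|^2$ pointwise. From $(g_4)$, $w^2=\left|g(v)\right|^{4\alpha}\leq 2\alpha\,v^2$. Adding and integrating gives $\|w\|^2\leq 2\alpha\|v\|^2$, which is the first claim.

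\emph{Step 3: integrability.} For $r\in[4\alpha,2\alpha Q^{*}]$ put $m=\frac{r}{2\alpha}\in[2,Q^{*}]$. Then $\int_\H\left|g(v)\right|^r d\xi=\int_\H\left|w\right|^{m}d\xi$. The continuous embedding $S^2_1(\H)\hookrightarrow L^{m}(\H)$ recalled in Section 2, combined with Step 2, bounds this by $C\|w\|^{m}\leq C(2\alpha)^{\frac m2}\|v\|^{m}=C\|v\|^{\frac{r}{2\alpha}}$, which is the stated estimate.
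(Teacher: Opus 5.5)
Your proposal is correct and follows the paper's proof: the gradient bound $\left|\nah w\right|^2\leq 2\alpha\left|\nah v\right|^2$ from $(g_5)$, the bound $w^2\leq 2\alpha v^2$ from $(g_4)$, and the Folland--Stein embedding with $m=\frac{r}{2\alpha}\in[2,Q^{*}]$. Your density argument justifies the chain rule through the global Lipschitz bound on $G=\left|g\right|^{2\alpha}$, whereas the paper only notes that $s\mapsto\left|s\right|^{2\alpha}$ is $C^1$; this adds rigor but is not a different route.
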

	\begin{proof}
		Since $\alpha>\frac{1}{2}$, the map $s \mapsto \left|s\right|^{2\alpha}$ is of class $C^1$ and the chain rule applies. Using $(g_5)$, we have
		\begin{align*}
			\left|\nah(\left|g(v)\right|^{2\alpha})\right|^2=&\left|2\alpha \left|g(v)\right|^{2\alpha-2} g(v) g^{\prime}(v) \nah v \right|^2 \\ =& 4\alpha^2 \left(\left|g(v)\right|^{2\alpha-1} g^{\prime}(v)\right)^2 \left|\nah v\right|^2 \\ \leq & 2\alpha \left|\nah v\right|^2.
		\end{align*}
		By $(g_4)$, $\left|g(v)\right|^{4\alpha}\leq 2\alpha \left|v\right|^2$, so $\left|w\right|_2^2 \leq 2\alpha \left|v\right|_2^2$ and consequently $\|w\|^2 \leq 2\alpha \|v\|^2$. Let $r \in [4\alpha, 2\alpha Q^{*}]$ and put $m=\frac{r}{2\alpha}\in [2,Q^{*}]$. The Folland-Stein embedding $S^2_1(\H)\hookrightarrow L^m(\H)$ gives
		\begin{align*}
			\int_\H \left|g(v)\right|^{r}d\xi = \int_\H w^{m}d\xi = \left|w\right|_m^m \leq C\|w\|^m \leq C \|v\|^{\frac{r}{2\alpha}}. \qed
		\end{align*}
	\end{proof}

	Since $p=2\alpha Q^{*}$, we may split off the critical term and rewrite $I_\lambda$ as
	\begin{equation}{\label{eq:I3}}
		\I(v)= \frac{1}{2} \int_\H \left(\left|\nah v\right|^2 + V(\xi)v^2\right)d\xi- \frac{(2\alpha)^{\frac{2}{Q-2}}}{ Q^{*}} \int_\H \left|v\right|^{Q^{*}} d\xi- \int_\H H(\xi,v)d\xi,
	\end{equation}
	where
	\begin{align*}
		H(\xi,v)= \frac{\lambda}{q} \left|g(v)\right|^{q} + \frac{1}{2\alpha Q^{*}} \left|g(v)\right|^{2\alpha Q^{*}}+ \frac{1}{2}V(\xi)v^2- \frac{1}{2} V(\xi)g^2(v)- \frac{(2\alpha)^{\frac{2}{Q-2}}}{Q^{*}} \left|v\right|^{Q^{*}}
	\end{align*}
	is the primitive in the second variable of
	\begin{align*}
		h(\xi,v)=g^{\prime}(v)\left(\lambda \left|g(v)\right|^{q-2}g(v) + \left|g(v)\right|^{2\alpha Q^{*}-2}g(v)-V(\xi)g(v)\right) +V(\xi)v- (2\alpha)^{\frac{2}{Q-2}}\left|v\right|^{Q^{*}-2}v.
	\end{align*}

	\begin{lemma}{\label{lemma:lH}}
		The functions $H$ and $h$ have the following properties, uniformly in $\xi \in \H$:
		\begin{itemize}
			\item[$(H_1)$]  $\lim\limits_{t \to 0} \frac{H(\xi,t)}{t^2}=0;$
			\item[$(H_2)$] $\lim\limits_{t \to \infty} \frac{H(\xi,t)}{t^{Q^{*}}}=0$;
			\item[$(H_3)$] $\lim\limits_{t \to 0} \frac{h(\xi,t)}{t}=0;$
			\item[$(H_4)$] $\lim\limits_{t \to \infty} \frac{h(\xi,t)}{t^{Q^{*}-1}}=0$.
		\end{itemize}
	\end{lemma}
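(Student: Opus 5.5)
We treat each term of $H$ and $h$ separately and reduce everything to the asymptotics of $g$ collected in Lemma \ref{lemma:l1}. Uniformity in $\xi$ comes from the bounds $V_0\le V\le V_M$, because $V$ only appears as a bounded coefficient. The functions are odd in $t$ (by oddness of $g$), so it is enough to treat $t>0$.

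\emph{Behaviour at the origin, $(H_1)$ and $(H_3)$.} Since $q>4\alpha\ge 2$ and $g(t)\sim t$, we get $\frac{\lambda}{q}g^q$, $g^{2\alpha Q^*}$ and $t^{Q^*}$ all equal to $o(t^2)$. For the remaining term we write $\frac12 V(\xi)\big(t^2-g^2(t)\big)$ and use $g(0)=0$, $g'(0)=1$ and $g\in C^\infty$, so that $g(t)=t+O(t^2)$. Then $t^2-g^2(t)=o(t^2)$, and since $V\le V_M$ this gives $(H_1)$ uniformly in $\xi$. For $(H_3)$ we argue the same way: $g'(t)g^{q-1}$, $g'g^{2\alpha Q^*-1}$ and $t^{Q^*-1}$ are all $o(t)$, while
\begin{align*}
V(\xi)\big(t-g'(t)g(t)\big)=V(\xi)\,o(t),
\end{align*}
because $g'(t)g(t)=t+o(t)$.

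\emph{Behaviour at infinity, $(H_2)$ and $(H_4)$.} By $(g_5)$ we have $g(t)^{2\alpha}\sim (2\alpha)^{1/2} t$. Hence $g^{2\alpha Q^*}/(2\alpha Q^*)\sim (2\alpha)^{Q^*/2}t^{Q^*}/(2\alpha Q^*)$. Since $(2\alpha)^{Q^*/2-1}=(2\alpha)^{2/(Q-2)}$, this equals $\frac{(2\alpha)^{2/(Q-2)}}{Q^*}t^{Q^*}(1+o(1))$, which cancels the subtracted critical term to leading order. The remaining terms are lower order: $g^q\lesssim t^{q/(2\alpha)}$ with $q/(2\alpha)<Q^*$, and $Vt^2$, $Vg^2$ are $O(t^2)$. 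Together these give $(H_2)$.

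For $(H_4)$ we need the analogous statement for derivatives, namely
\begin{align*}
g'(t)g(t)^{2\alpha Q^*-1}\big/t^{Q^*-1}\to(2\alpha)^{2/(Q-2)}.
\end{align*}
This is the main obstacle, since $(g_5)$ only gives the limit of $g$ itself. We plan to compute it from the ODE. For large $t$,
\begin{align*}
g'(t)=\big(1+2\alpha g^{4\alpha-2}\big)^{-1/2}\sim(2\alpha)^{-1/2}g^{1-2\alpha},
\end{align*}
so $g'g^{2\alpha Q^*-1}\sim(2\alpha)^{-1/2}g^{2\alpha(Q^*-1)}$. Inserting $g^{2\alpha}\sim(2\alpha)^{1/2}t$ then yields $(2\alpha)^{(Q^*-2)/2}t^{Q^*-1}=(2\alpha)^{2/(Q-2)}t^{Q^*-1}$, which is exactly the required cancellation. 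The other terms of $h$ are controlled as follows:
\begin{itemize}
\item $g'g^{q-1}\lesssim g^{q-2\alpha}\lesssim t^{(q-2\alpha)/(2\alpha)}$, and $\frac{q}{2\alpha}-1<Q^*-1$;
\item $Vg'g\le V_M t$ and $Vt\le V_M t$ are both $O(t)=o(t^{Q^*-1})$.
\end{itemize}
These bounds give $(H_4)$ uniformly in $\xi$. Alternatively, $(H_2)$ can be deduced from $(H_4)$ by integration, and we would use this to double-check the constants.
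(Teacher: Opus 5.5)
Your proposal is correct and follows the paper's proof essentially step for step. Both treat $H$ and $h$ term by term. At the origin both rely on $g(t)/t\to1$, and at infinity on $(g_5)$, with the ODE $g'=(1+2\alpha|g|^{4\alpha-2})^{-1/2}$ supplying the limit $(2\alpha)^{2/(Q-2)}$ that cancels the critical term in $(H_4)$.

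One small correction: the expansion $g(t)=t+O(t^2)$ fails for $\frac12<\alpha<\frac34$. There $|g|^{4\alpha-2}$ is not $C^1$ at $0$, and in fact $g(t)-t\sim-\frac{\alpha}{4\alpha-1}t^{4\alpha-1}$. This does not affect your argument, because $g(t)/t\to1$ alone already gives $t^2-g^2(t)=o(t^2)$ and $g'(t)g(t)=t+o(t)$.
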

	\begin{proof}
		To prove $(H_1)$, observe that
		\begin{align*}
			\frac{\left|g(t)\right|^{q}}{t^2}=\left(\frac{g(t)}{t}\right)^2 \left|g(t)\right|^{q-2}, \quad \frac{\left|g(t)\right|^{2\alpha Q^{*}}}{t^2}=\left(\frac{g(t)}{t}\right)^2 \left|g(t)\right|^{2\alpha Q^{*}-2}.
		\end{align*}
		Since $\displaystyle\lim_{t\to0} \frac{g(t)}{t}=1$ with $2\alpha Q^{*}>Q^{*}>2$ and $q>4\alpha>2$, both quotients tend to $0$. For the remaining terms, $0\le t^2-g^2(t) \leq t^2$ together with $g^\prime(0)=1$ gives $V(\xi)(t^2-g^2(t))/t^2 \to 0$ uniformly in $\xi$, since $V \leq V_M$, and $\left|t\right|^{Q^{*}}/t^2 \to 0$ because $Q^{*}>2$. Hence $\lim\limits_{t \to 0} \frac{H(\xi,t)}{t^2}=0.$

		To prove $(H_2)$, taking into account $(g_4),(g_5)$ and $q< 2\alpha Q^{*},$ we deduce
		\begin{align*}
			\frac{\left|g(t)\right|^{q}}{\left|t\right|^{Q^{*}}}={\left(\frac{\left|g(t)\right|}{\left|t\right|^{\frac{1}{2\alpha}}}\right)^{q} \left|t\right|^{\left(\frac{q}{2\alpha}-Q^{*}\right)}} \to 0 \text{ as } t\to \infty,
		\end{align*}
		\begin{align*}
			0 \leq \frac{1}{2}V(\xi)\frac{t^2}{\left|t\right|^{Q^{*}}}- \frac{1}{2}V(\xi)\frac{\left|g(t)\right|^2 }{\left|t\right|^{Q^{*}}} \leq \frac{1}{2} V_M \frac{t^2}{\left|t\right|^{Q^{*}}} \to 0 \text{ as } t\to \infty,
		\end{align*}
		and
		\begin{align*}
			\frac{\left|g(t)\right|^{2\alpha Q^{*}}}{\left|t\right|^{Q^{*}}}= \left(\frac{\left|g(t)\right|}{\left|t\right|^{\frac{1}{2\alpha}}}\right)^{2\alpha Q^{*}} \to \left((2\alpha)^\frac{1}{4\alpha}\right)^{2\alpha Q^{*}}= (2\alpha)^\frac{Q}{Q-2} \text{ as } t\to \infty.
		\end{align*}
		Since $\frac{Q}{Q-2}-1=\frac{2}{Q-2}$, the last limit gives $\frac{1}{2\alpha Q^{*}}(2\alpha)^{\frac{Q}{Q-2}}=\frac{(2\alpha)^{\frac{2}{Q-2}}}{Q^{*}}$, so the two critical terms in $H$ cancel in the limit. Thus, $\lim\limits_{t \to \infty} \frac{H(\xi,t)}{t^{Q^{*}}}=0.$

		The proof of $(H_3)$ follows the same lines as $(H_1)$. To prove $(H_4)$, thanks to the definition of $g$, we have
		\begin{align*}
			g^{\prime}(t) g(t) \frac{\left|g(t)\right|^{2\alpha Q^{*}-2}}{\left|t\right|^{Q^{*}-1}}&= g^{\prime}(t) g(t)\left|g(t)\right|^{2\alpha-2} \left(\frac{\left|g(t)\right|}{\left|t\right|^{\frac{1}{2\alpha}}}\right)^{2\alpha Q^{*}-2\alpha}\\ &= \frac{g(t)\left|g(t)\right|^{2\alpha-2}}{\sqrt{\left(1+2\alpha \left|g(t)\right|^{2(2\alpha -1)}\right)}}\left(\frac{\left|g(t)\right|}{\left|t\right|^{\frac{1}{2\alpha}}}\right)^{2\alpha Q^{*}-2\alpha} \to (2\alpha)^{\frac{2}{Q-2}} \text{ as } t \to \infty
		\end{align*}
		and $ g^{\prime}(t) g(t) \left|g(t)\right|^{q-2}\left|t\right|^{1-Q^{*}} \to 0$ as $t \to \infty.$ Moreover, $V(\xi)\left|t\right|^{2-Q^{*}} \to 0$ and $V(\xi)g(t)g^\prime(t)\left|t\right|^{1-Q^{*}}\to0$ uniformly in $\xi$, by $(g_2)$, $(g_4)$ and $V\leq V_M$. The first limit cancels against $-(2\alpha)^{\frac{2}{Q-2}}\left|t\right|^{Q^{*}-2}t\left|t\right|^{1-Q^{*}}$, which settles $(H_4)$. \qed
	\end{proof}

	\begin{rem}{\label{rem:H}}
		Fix $\epsilon>0$. By Lemma \ref{lemma:lH} and the continuity of $H$ and $h$, there exist $C_\epsilon, D_\epsilon>0$, independent of $\xi$, such that
		\begin{align*}
			\left|H(\xi,v)\right| \leq \epsilon(v^2 +\left|v\right|^{Q^{*}}) + C_\epsilon \left|v\right|^{\frac{q}{2\alpha}}
		\end{align*}
		and
		\begin{align*}
			\left|h(\xi,v)v\right| \leq \epsilon(v^2 +\left|v\right|^{Q^{*}}) + D_\epsilon \left|v\right|^{\frac{q}{2\alpha}}, \quad \text{ where } 1< \frac{q}{2\alpha} <Q^{*}.
		\end{align*}
	\end{rem}

	\begin{lemma}{\label{lemma:l2}}
		Let  $S_R= \{v \in S^2_1(\H): \|v\|=R \}.$ Then there exist $R, \ba >0$ such that
		\begin{align*}
			\I(v) \geq \ba \ \text{ for all } v \in S_R, \quad \I(v)>0 \ \text{ for all } 0<\|v\| \leq R.
		\end{align*}
	\end{lemma}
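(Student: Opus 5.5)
I will work from the decomposition \eqref{eq:I3} of $\I$ together with the growth bound on $H$ in Remark \ref{rem:H}. For $v \in S^2_1(\H)$, hypothesis $(V_1)$ gives
\begin{align*}
\frac{1}{2}\int_\H \left(\left|\nah v\right|^2 + V(\xi)v^2\right)d\xi \geq \frac{\min\{1,V_0\}}{2}\|v\|^2 .
\end{align*}
The explicit critical term in \eqref{eq:I3} is controlled by the Folland--Stein inequality:
\begin{align*}
\frac{(2\alpha)^{\frac{2}{Q-2}}}{Q^{*}}\int_\H \left|v\right|^{Q^{*}}d\xi \leq C\|v\|^{Q^{*}} .
\end{align*}

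Next I fix $\epsilon>0$ with $\epsilon \leq \frac{\min\{1,V_0\}}{4}$ and apply Remark \ref{rem:H}. This gives
\begin{align*}
\int_\H \left|H(\xi,v)\right|d\xi \leq \epsilon\|v\|^2 + \epsilon C\|v\|^{Q^{*}} + C_\epsilon \int_\H \left|v\right|^{\frac{q}{2\alpha}}d\xi .
\end{align*}
The exponent satisfies $m:=\frac{q}{2\alpha}\in(2,Q^{*})$, since $q>4\alpha$. The embedding $S^2_1(\H)\hookrightarrow L^{m}(\H)$ then bounds the last integral by $C\|v\|^{m}$. Collecting everything,
\begin{align*}
\I(v) \geq \frac{\min\{1,V_0\}}{4}\|v\|^2 - C_1\|v\|^{m} - C_2\|v\|^{Q^{*}}
= \|v\|^2\left(\frac{\min\{1,V_0\}}{4} - C_1\|v\|^{m-2} - C_2\|v\|^{Q^{*}-2}\right).
\end{align*}

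Since $m-2>0$ and $Q^{*}-2>0$, I choose $R>0$ so small that $C_1 s^{m-2}+C_2 s^{Q^{*}-2}\leq \frac{\min\{1,V_0\}}{8}$ for all $s\in(0,R]$. Then $\I(v)\geq \frac{\min\{1,V_0\}}{8}\|v\|^2>0$ whenever $0<\|v\|\leq R$. On the sphere $S_R$ this gives $\I(v)\geq \ba:=\frac{\min\{1,V_0\}}{8}R^2$.

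The only delicate point is that the constants $C_\epsilon$ in Remark \ref{rem:H} must be uniform in $\xi$; this uniformity is already provided there, using $V\leq V_M$. I therefore expect no real obstacle. One could avoid the splitting \eqref{eq:I3} by working directly with $g$: use $g^2(v)\geq g(1)^2v^2$ on $\{|v|\leq1\}$ from $(g_6)$, and bound the remaining terms via Lemma \ref{lemma:lg}. That route has a worse quadratic term, though, because of the region where $|v|>1$, so the decomposition above is the cleaner choice.
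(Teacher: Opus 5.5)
Your proposal is correct and follows the paper's proof essentially line by line. Both arguments use the splitting \eqref{eq:I3}, the lower bound $\frac{\min\{1,V_0\}}{2}\|v\|^2$ from $(V_1)$, Remark \ref{rem:H} with a small $\epsilon$, the Folland--Stein embedding for the exponents $\frac{q}{2\alpha}$ and $Q^{*}$, and a small radius $R$. Your bookkeeping, which absorbs only $\epsilon\int_\H v^2\,d\xi$ into the quadratic term and folds $\epsilon C\|v\|^{Q^{*}}$ into the $\|v\|^{Q^{*}}$ term, is slightly cleaner than the paper's choice $\epsilon(1+C_{Q^{*}}^{Q^{*}})\leq \frac{k}{4}$.
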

	\begin{proof}
		Write $k= \min\{1, V_0\}$ and let $C_m$ denote the norm of the embedding $S^2_1(\H)\hookrightarrow L^m(\H)$, $m \in [2,Q^{*}]$. By \eqref{eq:I3} and Remark \ref{rem:H},
		\begin{align*}
			\I(v) \geq \frac{k}{2}\|v\|^2 - \frac{(2\alpha)^{\frac{2}{Q-2}}}{Q^{*}}\int_\H \left|v\right|^{Q^{*}}d\xi - \epsilon \int_\H \left(v^2+\left|v\right|^{Q^{*}}\right)d\xi - C_\epsilon \int_\H \left|v\right|^{\frac{q}{2\alpha}}d\xi.
		\end{align*}
		Fix $\epsilon>0$ so small that $\epsilon(1+C_{Q^{*}}^{Q^{*}}) \leq \frac{k}{4}$. Since $\frac{q}{2\alpha}$ and $Q^{*}$ both lie in $(2,Q^{*}]$, the embedding gives
		\begin{align*}
			\I(v) \geq \frac{k}{4}\|v\|^2 - C_1 \|v\|^{Q^{*}}- C_2 \|v\|^{\frac{q}{2\alpha}}.
		\end{align*}
		Since $Q^{*}>2$ and  $q>4\alpha$, the function $s \mapsto \frac{k}{4}s^2-C_1s^{Q^{*}}-C_2s^{\frac{q}{2\alpha}}$ is positive on $(0,R]$ for small $R>0$, and setting $\ba$ equal to its value at $s=R$ gives both assertions. \qed
	\end{proof}

	\begin{lemma}{\label{lemma:l3}}
		There exists $v \in S^2_1(\H)$ such that $\|v\|>R$ and $\I(v)<0.$
	\end{lemma}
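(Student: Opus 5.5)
Fix a nonnegative $\varphi \in C_c^\infty(\H)$, $\varphi \not\equiv 0$, and evaluate $\I$ along the ray $s\varphi$, $s>0$. The aim is $\I(s\varphi) \to -\infty$ as $s \to \infty$. Then, since $\|s\varphi\| = s\|\varphi\| > R$ for large $s$, the choice $v = s\varphi$ with $s$ large proves the lemma. I would use the original form of $\I$, not \eqref{eq:I3}, because there the sign of each term is transparent:
\begin{align*}
\I(s\varphi) = \frac{s^2}{2}\int_\H \left|\nah \varphi\right|^2 d\xi + \frac12\int_\H V(\xi) g^2(s\varphi)\,d\xi - \frac{\lambda}{q}\int_\H \left|g(s\varphi)\right|^q d\xi - \frac{1}{2\alpha Q^{*}}\int_\H \left|g(s\varphi)\right|^{2\alpha Q^{*}} d\xi .
\end{align*}

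\emph{Upper bounds on the positive terms.} By $(g_4)$ and $V \leq V_M$,
\begin{align*}
\frac12\int_\H V g^2(s\varphi)\,d\xi \leq \frac{V_M}{2}s^2\int_\H \varphi^2\,d\xi ,
\end{align*}
so the first two terms together are at most $C s^2$. Since $\lambda>0$, the third term is nonpositive and may simply be discarded.

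\emph{Lower bound on the critical term.} Pick a set $K \subset \operatorname{supp}\varphi$ of positive measure on which $\varphi \geq c_0 > 0$. For $s c_0 \geq 1$, property $(g_6)$ gives $\left|g(s\varphi)\right| \geq g(1)(s\varphi)^{\frac{1}{2\alpha}}$ on $K$. Hence
\begin{align*}
\int_\H \left|g(s\varphi)\right|^{2\alpha Q^{*}} d\xi \geq g(1)^{2\alpha Q^{*}} s^{Q^{*}} \int_K \varphi^{Q^{*}} d\xi = c_1 s^{Q^{*}}, \qquad c_1>0 .
\end{align*}

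\emph{Conclusion.} Combining the two bounds, $\I(s\varphi) \leq C s^2 - c_1' s^{Q^{*}}$ for large $s$. Since $Q^{*}>2$, this tends to $-\infty$, and the lemma follows.

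The only delicate point is that $g$ grows sublinearly, like $s^{1/2\alpha}$ by $(g_5)$. One must therefore check that the surviving negative power $s^{Q^{*}}$ still beats the quadratic growth of the gradient term. It does precisely because the nonlinearity is critical: $(2\alpha Q^{*})\cdot\frac{1}{2\alpha} = Q^{*} > 2$. The $\lambda$-term would give only $s^{q/2\alpha}$ with $q/2\alpha > 2$, which would also suffice, but it is not needed.
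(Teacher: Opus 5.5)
Your proof is correct and follows the paper's argument. Both evaluate $I_\lambda$ along the ray $t\psi$ of a fixed nonnegative compactly supported function, and both show that the nonlinear term outgrows the quadratic part because $g$ grows like $t^{1/(2\alpha)}$. The differences are minor: you bound $|g(s\varphi)|$ below via $(g_6)$ on a superlevel set of $\varphi$ and discard the $\lambda$-term, while the paper uses the monotonicity of $g(s)/s$ from $(g_3)$ to get $|g(s\psi)|\geq |g(s)|\psi$ and then $(g_5)$ for both the $q$- and $p$-terms. Note also that what is actually needed is $p>4\alpha$, not criticality as such.
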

	\begin{proof}
		It is sufficient to show that there exists $\psi \in S^2_1(\H)$ such that $\I(t \psi) \to - \infty$ as $t \to \infty.$ Choose $\psi \in C_0^\infty(\H)$ such that $0 \leq \psi \leq 1$ and $\operatorname{supp}(\psi)= \overline{B_1(0)}.$ By $(g_3)$ the map $s \mapsto \frac{g(s)}{s}$ is nonincreasing on $(0,\infty)$, so for $\xi \in B_1(0)$ and $s>0$ we get
		\begin{align*}
			\frac{g(s \psi(\xi))}{s \psi(\xi)} \geq \frac{g(s)}{s}, \quad \text{ hence } \left|g(s\psi(\xi))\right| \geq \left|g(s)\right|\psi(\xi).
		\end{align*}
		This gives us
		\begin{align*}
			\I(t \psi)=& \frac{1}{2} \int_\H \left|\nah (t \psi )\right|^2d\xi + \frac{1}{2} \int_\H V(\xi)g^2(t \psi)d\xi- \int_\H F(g(t \psi))d\xi \\ =& \frac{1}{2} \int_\H \left|\nah (t \psi )\right|^2d\xi + \frac{1}{2} \int_\H V(\xi)g^2(t \psi)d\xi - \frac{\lambda}{q} \int_\H \left|g(t \psi)\right|^{q} d\xi - \frac{1}{p} \int_\H \left|g(t \psi)\right|^{p} d\xi \\ \leq & \frac{t^2}{2} \left(\int_{B_1(0)} \left(\left|\nah \psi \right|^2+ V(\xi) \psi^2\right)d\xi -C_1 \frac{\left|g(t)\right|^{q} }{t^2} \int_{B_1(0)} \psi^{q} d\xi -C_2 \frac{\left|g(t)\right|^{p}}{t^2}\int_{B_1(0)} \psi^{p} d\xi \right).
		\end{align*}
		Using $(g_5)$, we observe that  $\lim\limits_{t \to \infty} \frac{\left|g(t)\right|^r}{t^2}= + \infty$ for $r>4\alpha$. Both $q>4\alpha$ and $p=2\alpha Q^{*}>4\alpha$ satisfy this. Hence $\I(t \psi) \to -\infty$ as $t \to \infty.$  \QED
	\end{proof}

	The next two lemmas describe the fibering map $t \mapsto \I(tv)$. They are used in Section 5 to locate the maximum of $\I$ along the ray through the test function.

	\begin{lemma}{\label{lemma:gg}}
		Let $\alpha>\frac12$. Then the map $s \mapsto \dfrac{g(s)g^{\prime}(s)}{s}$ is strictly decreasing on $(0,\infty)$.
	\end{lemma}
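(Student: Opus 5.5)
We need to show that $\phi(s)=g(s)g'(s)/s$ is strictly decreasing on $(0,\infty)$. The plan is to differentiate and show $\phi'(s)<0$, i.e. that
\begin{align*}
s\left(g'(s)^2+g(s)g''(s)\right)-g(s)g'(s)<0, \qquad s>0 .
\end{align*}

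\emph{Step 1: make everything explicit.} Write $a(s)=1+2\alpha g(s)^{4\alpha-2}$, so that $g'=a^{-1/2}$ for $s>0$. Differentiating gives
\begin{align*}
g''=-\tfrac12 a^{-3/2}a' \quad\text{with}\quad a'=2\alpha(4\alpha-2)g^{4\alpha-3}g' .
\end{align*}
Hence
\begin{align*}
g'^2+gg''=a^{-1}-\alpha(2\alpha-1)\cdot 2\,g^{4\alpha-2}a^{-2}=a^{-2}\big(a-2\alpha(2\alpha-1)g^{4\alpha-2}\big).
\end{align*}
Since $a=1+2\alpha g^{4\alpha-2}$, this equals $a^{-2}\big(1+2\alpha(2-2\alpha)g^{4\alpha-2}\big)$.

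\emph{Step 2: reduce the sign condition.} Substituting, the condition $\phi'(s)<0$ becomes
\begin{align*}
s\,a^{-2}\big(1+4\alpha(1-\alpha)g^{4\alpha-2}\big)<g\,a^{-1/2}.
\end{align*}
Multiplying through by $a^{2}/g$, this is
\begin{align*}
\frac{s}{g}\big(1+4\alpha(1-\alpha)g^{4\alpha-2}\big)<a^{3/2}.
\end{align*}

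\emph{Step 3: bound $s/g$ and close in cases.} The key tool is $(g_3)$. Its left inequality gives $g\le 2\alpha s g'$, which is not what we need. Instead we use the sharper inequality $s\,g'(s)< g(s)$, which holds strictly for $s>0$ because $g$ is strictly concave by $(g_7)$ and $g(0)=0$. This yields $s/g<1/g'=a^{1/2}$. It therefore suffices to show
\begin{align*}
1+4\alpha(1-\alpha)g^{4\alpha-2}\le a = 1+2\alpha g^{4\alpha-2},
\end{align*}
which reduces to $4\alpha(1-\alpha)\le 2\alpha$, i.e. $2-2\alpha\le 1$, i.e. $\alpha\ge\frac12$.

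For $\alpha\ge1$ the left side is at most $1\le a$ immediately. For $\frac12<\alpha<1$ we use $4\alpha(1-\alpha)\le 2\alpha \iff \alpha\ge \frac12$, which holds. Combining with the strict inequality $s/g<a^{1/2}$ gives
\begin{align*}
\frac{s}{g}\big(1+4\alpha(1-\alpha)g^{4\alpha-2}\big)<a^{1/2}\cdot a\le a^{3/2},
\end{align*}
with the strictness coming from concavity. So the only delicate point is obtaining the strict bound $sg'<g$, which concavity supplies; note that when $1+4\alpha(1-\alpha)g^{4\alpha-2}\le0$ (possible for $\alpha>1$) the inequality is trivial. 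This gives $\phi'<0$ on $(0,\infty)$.
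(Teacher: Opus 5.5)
Your proof is correct and follows essentially the same route as the paper: you differentiate the ODE to get $(g')^2+gg''=a^{-2}\bigl(1+(2-2\alpha)X\bigr)$ with $X=2\alpha g^{4\alpha-2}$, and then compare with $gg'/s$ using $sg'\le g$ from $(g_3)$. The one difference is where strictness comes from. You take it from the strict concavity $(g_7)$, which gives $sg'<g$, and you then handle the sign of $1+4\alpha(1-\alpha)g^{4\alpha-2}$ separately. The paper instead takes it from $\frac{1+(2-2\alpha)X}{1+X}<1$, i.e.\ from $\alpha>\frac12$, and this makes the sign case unnecessary.
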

	\begin{proof} 
		Let $\mathcal{G}(s)=\dfrac{g(s)g^{\prime}(s)}{s}$ and $G(s)=g^2(s)$ for $s>0$. Then, \[\mathcal{G}^\prime(s)=\frac{s\left(g(s)g^{\prime\prime}(s)+g^\prime(s)^2\right)-g(s)g^\prime(s)}{s^2}= \frac{sG^{\prime \prime}(s)-G^\prime(s)}{2s^2}.\] Thus our assertion is $sG^{\prime\prime}(s)<G^{\prime}(s)$ for $s>0$. Differentiating $(g^{\prime})^{-2}=1+2\alpha g^{4\alpha-2}$ gives $g^{\prime\prime}=-\alpha(4\alpha-2)\,g^{4\alpha-3}(g^{\prime})^{4}$, so, writing $X=2\alpha g^{4\alpha-2}>0$ and using $(g^{\prime})^{2}=(1+X)^{-1}$, we get
		\begin{align*}
			gg^{\prime\prime}=-(2\alpha-1)X(g^{\prime})^{4}, \quad
			(g^{\prime})^{2}+gg^{\prime\prime}=(g^{\prime})^{2}\,\frac{1+(2-2\alpha)X}{1+X}.
		\end{align*}
		Since $2-2\alpha<1$, the last fraction is $<1$. Therefore, using $(g_3)$ in the form $sg^{\prime}\leq g$,
		\begin{align*}
			sG^{\prime\prime}(s)=2s\left((g^{\prime})^{2}+gg^{\prime\prime}\right)
			<2s(g^{\prime})^{2}\leq 2gg^{\prime}=G^{\prime}(s). \qed
		\end{align*}
	\end{proof}

	\begin{lemma}{\label{lemma:fibering}}
		For every $v \in S^2_1(\H)\setminus\{0\}$, there is a unique $t(v)>0$ with $\I^{\prime}(t(v)v)v=0$, and $\I(t(v)v)=\max_{t>0}\I(tv)$.
	\end{lemma}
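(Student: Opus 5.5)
The plan is to study the fibering map $\phi_v(t)=\I(tv)$, $t>0$. By the $C^1$ regularity of $\I$,
\begin{align*}
\phi_v^{\prime}(t)=\I^{\prime}(tv)v = t\int_\H \left|\nah v\right|^2 d\xi + \int_\H \left[V(\xi)g(tv)-f(g(tv))\right]g^{\prime}(tv)\,v\, d\xi .
\end{align*}
I will show that $\phi_v^{\prime}(t)/t$ is strictly decreasing, positive near $0$ and negative for large $t$. Then $\phi_v^{\prime}$ has exactly one zero $t(v)$. Moreover $\phi_v$ increases on $(0,t(v))$ and decreases afterwards, which gives $\I(t(v)v)=\max_{t>0}\I(tv)$.

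\emph{Step 1: decomposition.} On $\{v\neq0\}$ put $s=t\left|v\right|$. Using that $g$ is odd and $g^{\prime}$ is even, I write
\begin{align*}
\frac{\phi_v^{\prime}(t)}{t}=\int_\H \left|\nah v\right|^2 d\xi+\int_\H V(\xi)v^2\,\frac{g(s)g^{\prime}(s)}{s}\,d\xi-\int_\H v^2\left(\lambda\frac{g(s)^{q-1}g^{\prime}(s)}{s}+\frac{g(s)^{p-1}g^{\prime}(s)}{s}\right)d\xi =: A(t)-B(t).
\end{align*}
By Lemma \ref{lemma:gg}, the integrand of the $V$-term is strictly decreasing in $t$ pointwise on $\{v\neq0\}$, so $A$ is nonincreasing. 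Since $q-1>4\alpha-1$ and $p-1>4\alpha-1$, property $(g_8)$ shows that both integrands of $B$ are strictly increasing in $t$. Because $\lambda>0$ and $\{v\neq 0\}$ has positive measure, $B$ is strictly increasing. Monotonicity of the integrals follows pointwise, and finiteness comes from the bounds $g(s)g^{\prime}(s)\le s$ and Lemma \ref{lemma:lg}.

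\emph{Step 2: limits.} As $t\to0^+$, $g(s)g^{\prime}(s)/s\to1$, and $g^{r-1}(s)g^{\prime}(s)/s\le \left|s\right|^{r-2}\to0$ for $r\in\{q,p\}$. The $V$-term is bounded by $V_Mv^2$ and the $B$-terms by $Cv^2(1+\left|v\right|^{Q^*-2})$ for $t\le1$, using the estimate behind \eqref{eq:kbound}. Dominated convergence then gives
\begin{align*}
\lim_{t\to0^+}\frac{\phi_v^{\prime}(t)}{t}=\int_\H\left(\left|\nah v\right|^2+V(\xi)v^2\right)d\xi>0 .
\end{align*}
As $t\to\infty$, $A(t)$ stays bounded by $\int_\H(\left|\nah v\right|^2+V_Mv^2)\,d\xi$. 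By $(g_5)$ and $(g^{\prime})^{-2}=1+2\alpha g^{4\alpha-2}$ we have $g^{\prime}(s)\sim c\,s^{\frac{1}{2\alpha}-1}$, hence $g(s)^{q-1}g^{\prime}(s)/s\sim c\,s^{\frac{q}{2\alpha}-2}\to\infty$ because $q>4\alpha$. Fatou's lemma on $\{v\neq0\}$ then yields $B(t)\to\infty$, so $\phi_v^{\prime}(t)/t\to-\infty$.

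\emph{Step 3: conclusion.} Strict monotonicity together with continuity of $A-B$, the latter by dominated convergence on compact $t$-intervals, gives a unique zero $t(v)$. The sign pattern of $\phi_v^{\prime}$ then gives the maximum property. The main obstacle is the bookkeeping in Step 1: the integrands must be rewritten in the variable $s=t\left|v\right|$ so that Lemma \ref{lemma:gg} and $(g_8)$ apply pointwise on sets where $v<0$ as well. This is done via the parity of $g$. The only other essential input is the sign $\lambda>0$; the critical term alone already makes $B$ strictly increasing, so even that could be relaxed to $\lambda\ge0$.
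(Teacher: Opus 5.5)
Your proof is correct and follows essentially the same route as the paper. Like the paper, you divide $\I^{\prime}(tv)v$ by $t$, use the oddness of $g$ to write the integrand in terms of $g(s)^{\beta}g^{\prime}(s)/s$ at $s=t\left|v\right|$, get strict monotonicity from Lemma \ref{lemma:gg} and $(g_8)$, and use dominated convergence as $t\to0^+$ and Fatou's lemma as $t\to\infty$. The only difference is that you drive the quotient to $-\infty$ with the $\lambda$-term (via $q>4\alpha$), whereas the paper uses the critical term $g(s)^{2\alpha Q^{*}-1}g^{\prime}(s)/s\sim(2\alpha)^{\frac{2}{Q-2}}s^{Q^{*}-2}$; as you note yourself, that choice removes the need for $\lambda>0$ at this step.
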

	\begin{proof}
		For $t>0$, set $\varphi_v(t)=\frac{1}{t}\I^{\prime}(tv)v$. Since $g$ is odd, $g(tv)g^{\prime}(tv)v=\left|v\right|g(t\left|v\right|)g^{\prime}(t\left|v\right|)$, and writing $\Phi_{\ba}(s)=\frac{g(s)^{\ba}g^{\prime}(s)}{s}$ for $s>0$,
		\begin{align*}
			\varphi_v(t)=\left\|\nah v\right\|_2^{2}
			+\int_\H v^2\Big[V(\xi)\Phi_1(t\left|v\right|)
			-\lambda\Phi_{q-1}(t\left|v\right|)-\Phi_{2\alpha Q^{*}-1}(t\left|v\right|)\Big]d\xi.
		\end{align*}
		By Lemma \ref{lemma:gg}, $\Phi_1$ is strictly decreasing. Since $q-1>4\alpha-1$ and $2\alpha Q^{*}-1>4\alpha-1$, by $(g_8)$, $\Phi_{q-1}$ and $\Phi_{2\alpha Q^{*}-1}$ are strictly increasing. Hence $t \mapsto \varphi_v(t)$ is strictly decreasing.

		By $(g_3)$ and $(g_4)$, $0<\Phi_1\leq 1$, and $\Phi_1(s)\to1$, $\Phi_{q-1}(s), \Phi_{2\alpha Q^{*}-1}(s)\to 0$ as $s \to 0^{+}$. Then, dominated convergence gives $\varphi_v(0^{+})=\left\|\nah v\right\|_2^{2}+\int_\H V v^2 d\xi>0$. By $(g_5)$, $\Phi_{2\alpha Q^{*}-1}(s)\sim (2\alpha)^{\frac{2}{Q-2}}s^{Q^{*}-2}\to\infty$, as $s \to \infty$, so Fatou's lemma gives $\varphi_v(t)\to-\infty$. Thus $\varphi_v$ has a unique zero $t(v)$, with $\varphi_v>0$ on $(0,t(v))$ and $\varphi_v<0$ on $(t(v),\infty)$. Since $\frac{d}{dt}\I(tv)=t\varphi_v(t)$, the map $t \mapsto \I(tv)$ increases on $(0,t(v))$ and decreases afterwards. \qed
	\end{proof}

	In light of Lemma~\ref{lemma:l2} and Lemma~\ref{lemma:l3} together with a variant of the Ambrosetti--Rabinowitz mountain pass theorem \cite{schechter2012linking}, we have the following consequence. Let
	\begin{align*}
		\Ga=\{\sigma \in C([0,1], S^2_1(\H)): \sigma (0)=0, \sigma (1) \neq 0, \I(\sigma (1))<0\}
	\end{align*}
	and
	\begin{equation}{\label{eq:C}}
		c= \inf_{\sigma \in \Ga} \sup_{t\in [0,1]} \I(\sigma (t)).
	\end{equation}
	By Lemma~\ref{lemma:l2}, $\I>0$ on $\{0<\|v\|\leq R\}$, so $\I(\sigma(1))<0$ forces $\|\sigma(1)\|>R$ for every $\sigma \in \Ga$; since $\|\sigma(0)\|=0$, the map $t \mapsto \|\sigma(t)\|$ is continuous and therefore $\sigma([0,1])$ meets $S_R$. Consequently
	\begin{equation}{\label{eq:cpos}}
		c \geq \ba>0 .
	\end{equation}
	Then, for the constant $c$, there exists a  sequence $\{v_n\} \subset S^2_1(\H)$ at level $c$ satisfying
	\begin{align*}
		\I(v_n) \to c, \quad  (1+\|v_n\|) \|\I^{\prime}(v_n)\| \to 0 \quad \text{ as } n \to \infty.
	\end{align*}
	This sequence $\{v_n\}$ is called a Cerami sequence.

	\begin{lemma}{\label{lemma:l4}}
		The Cerami sequence $\{v_n\}$ is bounded in $S^2_1(\H).$
	\end{lemma}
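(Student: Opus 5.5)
The plan is the standard Colin--Jeanjean comparison $I_\lambda(v_n)-\frac1{q}\langle I_\lambda'(v_n),\psi_n\rangle$, with the test direction $\psi_n=\frac{g(v_n)}{g'(v_n)}$ in place of $v_n$. First we check that $\psi_n$ is admissible and that its norm is controlled by that of $v_n$. Indeed,
\[
\nabla_{\mathbb H}\psi_n=\Big(1+\frac{g(v_n)g''(v_n)}{g'(v_n)^2}\Big)\nabla_{\mathbb H} v_n,
\]
and from the identity $gg''=-(2\alpha-1)X(g')^4$ of Lemma \ref{lemma:gg} we get
\[
1+\frac{gg''}{(g')^2}=1-\frac{(2\alpha-1)X}{1+X}\in(2-2\alpha,1]\cup[1,\ldots).
\]
So $|\nabla_{\mathbb H}\psi_n|\le C_\alpha|\nabla_{\mathbb H}v_n|$. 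By $(g_3)$ we also have $|\psi_n|\le 2\alpha|v_n|$. Hence $\|\psi_n\|\le C\|v_n\|$, and the Cerami condition gives $\langle I_\lambda'(v_n),\psi_n\rangle\to0$.

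Next we compute the difference. With $p=2\alpha Q^*$,
\begin{align*}
I_\lambda(v_n)-\frac1q\langle I_\lambda'(v_n),\psi_n\rangle
=&\int_{\mathbb H^N}\Big(\frac12-\frac1q\Big(1+\frac{g(v_n)g''(v_n)}{g'(v_n)^2}\Big)\Big)|\nabla_{\mathbb H}v_n|^2d\xi
+\Big(\frac12-\frac1q\Big)\int_{\mathbb H^N}V(\xi)g^2(v_n)d\xi\\
&+\Big(\frac1q-\frac1p\Big)\int_{\mathbb H^N}|g(v_n)|^{p}d\xi .
\end{align*}
In this expression $f(g)g/g'\cdot g'$ simplifies to $f(g)g$, and $\int F(g)$ cancels the $\lambda$-term exactly. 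The gradient coefficient is at least $\frac12-\frac1q\max\{1,\,1+(2\alpha-1)\cdot\text{stuff}\}$. When $\alpha\ge1$ the bracket is $\le1$, so the coefficient is $\ge\frac12-\frac1q>0$. When $\frac12<\alpha<1$ the bracket lies in $(2-2\alpha,1]$, so again it is $\le1$. In both cases we obtain
\[
c+o(1)(1+\|v_n\|)\ \ge\ \Big(\frac12-\frac1q\Big)\Big(\|\nabla_{\mathbb H}v_n\|_2^2+V_0\int_{\mathbb H^N}g^2(v_n)d\xi\Big).
\]

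The remaining and main obstacle is to control $\int v_n^2$ from $\int g^2(v_n)$, since $g^2(v)\sim v^{1/\alpha}$ at infinity. We split $\{|v_n|\le1\}$ and $\{|v_n|>1\}$. On the first set $(g_6)$ gives $g^2(v_n)\ge g(1)^2v_n^2$. On the second set, $v_n^2\le|v_n|^{Q^*}$, and by $(g_6)$ and Lemma \ref{lemma:lg},
\[
\int_{\{|v_n|>1\}}v_n^2\le\int|v_n|^{Q^*}\le C\|\nabla_{\mathbb H}v_n\|_2^{Q^*}.
\]
This uses Folland--Stein, but it gives a superlinear power, which is bad. Instead we use $v_n^2\le C|g(v_n)|^{4\alpha}$ on $\{|v_n|>1\}$ (by $(g_6)$), and interpolate $|g(v_n)|^{4\alpha}$ between $g^2(v_n)$ and $|g(v_n)|^{2\alpha Q^*}$. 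The latter is bounded by the third positive term $\big(\frac1q-\frac1p\big)\int|g(v_n)|^p$, which we retained above. Since $2<4\alpha<2\alpha Q^*$, we get $\int|g|^{4\alpha}\le\int g^2+\int|g|^{2\alpha Q^*}$. Therefore $\|v_n\|^2\le C\big(1+o(1)\|v_n\|\big)$, which forces $\{v_n\}$ to be bounded.
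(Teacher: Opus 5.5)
Your strategy is the paper's: test with $\psi_n=g(v_n)/g'(v_n)$, estimate $I_\lambda(v_n)-\frac1q\langle I_\lambda'(v_n),\psi_n\rangle$, then split $\{|v_n|\le1\}$ and $\{|v_n|>1\}$. However, the coercivity step has a sign error, and that error hides exactly where the hypothesis $q>4\alpha$ is needed.

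Since $(g/g')'=\frac{(g')^2-gg''}{(g')^2}=1-\frac{gg''}{(g')^2}$, and $gg''=-(2\alpha-1)X(g')^4$ with $(g')^2=(1+X)^{-1}$, the correct formula is
\[
\nabla_{\mathbb{H}}\psi_n=\Big(1+\frac{(2\alpha-1)X}{1+X}\Big)\nabla_{\mathbb{H}}v_n,\qquad X=2\alpha|g(v_n)|^{4\alpha-2}.
\]
For every $\alpha>\frac12$ this factor lies in $[1,2\alpha)$, and it tends to $2\alpha$ where $|g(v_n)|$ is large. So your claim that the bracket is $\le1$ is false in both ranges of $\alpha$, and the gradient coefficient is not bounded below by $\frac12-\frac1q$. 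The correct bound is
\[
\frac12-\frac1q\Big(1+\frac{(2\alpha-1)X}{1+X}\Big)>\frac12-\frac{2\alpha}{q},
\]
which is positive only because $q>4\alpha$. As written, your argument never uses $q>4\alpha$, so it would give boundedness for every $2<q<p$. That should have been a warning sign. For $q<4\alpha$ the coefficient becomes negative on the set where $|g(v_n)|$ is large, and for $q=4\alpha$ it is not bounded away from zero; in both cases the estimate collapses.

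With the constant $\frac12-\frac{2\alpha}{q}$ in place of $\frac12-\frac1q$, the proof goes through. You should also keep $(\frac1q-\frac1p)\int|g(v_n)|^{p}\,d\xi$ on the right-hand side of your displayed inequality, since you use it later. Your $L^2$ step is correct and differs mildly from the paper's:
\begin{itemize}
\item You bound $\int_{\{|v_n|>1\}}v_n^2$ by $C\int|g(v_n)|^{4\alpha}\le C\big(\int g^2(v_n)+\int|g(v_n)|^{2\alpha Q^*}\big)$ and then use the retained critical term.
\item The paper instead recovers $\int F(g(v_n))\le C$ from $I_\lambda(v_n)=c+e_n$, and uses $F(g(t))\ge Ct^{q/(2\alpha)}\ge Ct^2$ for $t\ge1$.
\end{itemize}
Both work. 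Yours avoids returning to the energy identity, but the essential repair is the gradient coefficient.
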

	\begin{proof}
		By definition of $\{v_n\}$,
		\begin{small}
			\begin{align}
				I_{\lambda}(v_n)=&  \frac{1}{2} \int_\H \left|\nah v_n\right|^2d\xi + \frac{1}{2} \int_\H V(\xi)g^2(v_n)d\xi- \int_\H F(g(v_n))d\xi= c+ e_n, \label{eq:E1} \\ \left|(1+\|v_n\|) \I^{\prime}(v_n)z \right| &= \left|(1+\|v_n\|) \left(\int_\H \nah v_n \cdot \nah z\, d\xi + \int_\H [V(\xi)g(v_n)z-f(g(v_n))z]g^{\prime}(v_n) d\xi \right)\right| \notag \\& \leq \epsilon_n \|z\|, \label{eq:E2}
			\end{align}
		\end{small}
		for all $z\in S^2_1(\H)$, where $e_n , \epsilon _n \to 0$ as $n \to \infty.$ Let $z=z_n= \frac{g(v_n)}{g^{\prime}(v_n)}.$ Solving the  differential equation for $g$ gives
		\begin{align*}
			\left(\frac{g}{g^\prime}\right)^\prime(s)=1+\frac{2\alpha (2\alpha -1) \left|g(s)\right|^{2(2\alpha -1)}}{1+2\alpha \left|g(s)\right|^{2(2\alpha -1)}} \in [1,2\alpha],
		\end{align*}
		so that
		\begin{small}
			\begin{align*}
				\quad &(1+\|v_n\|)\left(\int_\H \left(1+\frac{2\alpha (2\alpha -1) \left|g(v_n)\right|^{2(2\alpha -1)}}{(1+2\alpha \left|g(v_n)\right|^{2(2\alpha -1)})}\right) \left|\nah v_n \right|^2 d\xi  + \int_\H [V(\xi) g^2(v_n)- f(g(v_n))g(v_n)]d\xi \right)\\&\leq \epsilon_n \|z_n\|.
			\end{align*}
		\end{small}
		By $(g_4)$ and the bound $\left|g\right|\sqrt{1+2\alpha\left|g\right|^{2(2\alpha-1)}} \leq \left|g\right| + \sqrt{2\alpha}\left|g\right|^{2\alpha} \leq (1+2\alpha)\left|s\right|$, we have \[ \left|z_n\right|_2 \le C \left|v_n\right|_2, \left|\nah z_n \right| \le 2\alpha \left|\nah v_n\right| \text{ and } \|z_n\| \leq C \|v_n\|.\] Dividing by $1+\|v_n\|$ and using $\|z_n\|\leq C\|v_n\| \leq C(1+\|v_n\|)$, it implies
		\begin{small}
			\begin{align}
				\left| \I^{\prime}(v_n)z_n \right| &= \left| \left(\int_\H \left(1+\frac{2\alpha (2\alpha -1) \left|g(v_n)\right|^{2(2\alpha -1)}}{(1+2\alpha \left|g(v_n)\right|^{2(2\alpha -1)})}\right) \left|\nah v_n \right|^2 d\xi + \int_\H [V(\xi) g^2(v_n)- f(g(v_n))g(v_n)]d\xi \right)\right| \nonumber \\& \leq  \epsilon_n.\label{eq:E3}
			\end{align}
		\end{small}
		Since $q< 2\alpha Q^{*}$, one can easily follow that
		\begin{equation}{\label{eq:E4}}
			q F(t)-f(t)t= \left(\frac{q}{2\alpha Q^{*}}-1\right)\left|t\right|^{2\alpha Q^{*}} \leq 0 \text{ for all } t\in \R, \text{ with equality only at } t=0.
		\end{equation}
		Taking into account \eqref{eq:E1}, \eqref{eq:E2}, \eqref{eq:E3} and \eqref{eq:E4}, we deduce that
		\begin{small}
			\begin{align*}
				\int_\H & \left\{ \frac{1}{2}- \frac{1}{q}\left(1+\frac{2\alpha (2\alpha -1) \left|g(v_n)\right|^{2(2\alpha -1)}}{(1+2\alpha \left|g(v_n)\right|^{2(2\alpha -1)})}\right) \right\} \left|\nah v_n \right|^2  d\xi + \left(\frac{1}{2}-\frac{1}{q}\right) \int_\H V(\xi)g^2(v_n) d\xi
				\\ =& \left(\frac{1}{2} \int_\H \left(\left|\nah v_n\right|^2 +V(\xi)g^2(v_n)\right)d\xi- \int_\H F(g(v_n))d\xi \right) + \int_\H \left(F(g(v_n))- \frac{1}{q}f(g(v_n))g(v_n)\right) d\xi\\& - \frac{1}{q} \left\{\int_\H \left(1+\frac{2\alpha (2\alpha -1) \left|g(v_n)\right|^{2(2\alpha -1)}}{(1+2\alpha \left|g(v_n)\right|^{2(2\alpha -1)})}\right)  \left|\nah v_n \right|^2  d\xi+ \int_\H \left(V(\xi)g^2(v_n)-  f(g(v_n))g(v_n)\right)d\xi \right\} \\ \le &  c+e_n+ \epsilon _n.
			\end{align*}
		\end{small}
		The bracket in the first integral is at least $\frac{1}{2}-\frac{2\alpha}{q}$, which is positive because $q>4\alpha$. Hence
		\begin{align*}
			\int_\H \left(\left|\nah v_n\right|^2+ V(\xi)g^2(v_n)\right)d\xi \leq C \text{ for some } C>0,
		\end{align*}
		and, by \eqref{eq:E1}, $\int_\H F(g(v_n))d\xi \leq C$ as well.
		Define $A_n=\{\xi\in \H: \left|v_n(\xi)\right| \leq 1 \}$ and $B_n= \{\xi \in \H: \left|v_n(\xi)\right|>1\}$. Then
		\begin{align*}
			\int_\H \left|v_n\right|^2d\xi= \int_{A_n} \left|v_n\right|^2d\xi + \int_{B_n} \left|v_n\right|^2d\xi.
		\end{align*}
		By definition of $F$, $F(t) \geq Ct^{q}$ for some $C>0$ and every $t \geq 1$. Using $(g_6)$ and $q>4\alpha$, we obtain $F(g(t)) \geq C \left|g(t)\right|^{q} \geq Ct^{\frac{q}{2 \alpha}} \geq Ct^2$ for every $t \geq 1$, and $\left|g(t)\right| \geq C\left|t\right|$ for some $C>0$ and every $\left|t\right| \leq 1.$ Thus
		\begin{align*}
			\int_{B_n} \left|v_n\right|^2d\xi \leq \frac{1}{C} \int_{B_n} F(g(v_n))d\xi \leq \frac{1}{C} \int_\H F(g(v_n))d\xi
		\end{align*}
		and
		\begin{align*}
			\int_{A_n} \left|v_n\right|^2d\xi \leq \frac{1}{C} \int_{A_n} g^2(v_n)d\xi \leq \frac{1}{CV_0} \int_\H V(\xi)g^2(v_n)d\xi,
		\end{align*}
		which shows that $\{v_n\}$ is bounded in $L^2(\H)$. Combined with the bound on $\int_\H \left|\nah v_n\right|^2d\xi$ obtained above, this gives the boundedness of $\{v_n\}$ in $S^2_1(\H)$. \QED
	\end{proof}

	\begin{lemma}{\label{lemma:l6}}
		Let $a$ and $b$ be such that $1<a<Q$, $1 \leq b<\infty$ and $b\neq \frac{aQ}{Q-a}.$ Let $\{u_n\}$ be a sequence with the property that $\{u_n\}$ is bounded in $L^{b}(\H)$, $\{\nah u_n\}$ is bounded in $L^{a}(\H)$ and
		\begin{align*}
			\sup_{\xi \in \H} \int_{B_R(\xi)} \left|u_n\right|^{b} d\xi \to 0 \quad \text{ as } n \to \infty,
		\end{align*}
		for some $R>0$, where $B_R(\xi)$ is the gauge ball centred at $\xi$. Then $u_n \to 0$ in $L^d(\H)$, for $d \in (b, \frac{aQ}{Q-a})$.
	\end{lemma}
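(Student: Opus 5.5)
This is the Lions vanishing lemma on $\H$. I will follow Lions' original argument, with Euclidean cubes replaced by a covering of $\H$ by gauge balls and the local Sobolev inequality replaced by Folland--Stein. Write $a^*=\frac{aQ}{Q-a}$. By interpolation in $L^p$ it is enough to prove $u_n\to0$ in $L^{d_0}(\H)$ for one exponent $d_0\in(b,a^*)$. Indeed, $\{u_n\}$ is bounded in $L^b$. It is also bounded in $L^{a^*}$ whenever $b<a^*$, by the Folland--Stein inequality for $\nah$ in $L^a$. So every $d$ in the range lies between $b$ and $d_0$, or between $d_0$ and $a^*$. If $b>a^*$ the interval is empty and there is nothing to prove, so I assume $b<a^*$.

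\emph{Step 1 (local inequality).} First I prove a Sobolev--Poincar\'e-type inequality on a gauge ball: for $w$ with $w\in L^b$ and $\nah w\in L^a$ on $B_R(\xi)$,
\begin{align*}
|w|_{L^{a^*}(B_R(\xi))}\le C\left(\|\nah w\|_{L^a(B_R(\xi))}+|w|_{L^{b}(B_R(\xi))}\right),
\end{align*}
with $C$ independent of $\xi$. To get it, I multiply by a cutoff $\eta\in C_c^\infty(B_{2R}(\xi))$ with $\eta\equiv1$ on $B_R(\xi)$ and apply the global $L^a$ Folland--Stein inequality to $\eta w$. This produces the term $\|w\nah\eta\|_{L^a}$, which has to be bounded by $|w|_{L^b}$ plus a gradient term. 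Since $a$ and $b$ are unrelated, I would do this by interpolating $L^a$ between $L^b$ and $L^{a^*}$ when $b<a<a^*$, and by H\"older on the bounded ball when $a\le b$. Left invariance of $\nah$, of the gauge balls and of Haar measure makes $C$ independent of $\xi$. To avoid the cutoff sliding onto $B_{2R}$, I would state everything with $B_{2R}$ on the right-hand side and cover with balls $B_R(\xi_k)$ such that the doubled balls $B_{2R}(\xi_k)$ have bounded overlap.

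\emph{Step 2 (covering and summation).} Choose a maximal $R$-separated family $\{\xi_k\}$ in $\H$. The balls $B_R(\xi_k)$ cover $\H$, and the balls $B_{2R}(\xi_k)$ have overlap bounded by a constant $M$. This follows from the quasi-triangle inequality for the gauge and a volume count, since $|B_r|=cr^Q$. Pick $\theta\in(0,1)$ and $d_0\in(b,a^*)$ with $\frac1{d_0}=\frac{1-\theta}{b}+\frac{\theta}{a^*}$, and additionally require $\theta d_0\ge a$ (possible by taking $d_0$ close to $a^*$). On each ball, Step 1 and H\"older give
\begin{align*}
\int_{B_R(\xi_k)}|u_n|^{d_0}\le |u_n|_{L^b(B_R(\xi_k))}^{(1-\theta)d_0}\,C\left(\|\nah u_n\|_{L^a(B_{2R}(\xi_k))}+|u_n|_{L^b(B_{2R}(\xi_k))}\right)^{\theta d_0}.
\end{align*}
The first factor is at most $\big(\sup_\xi\int_{B_R(\xi)}|u_n|^b\big)^{(1-\theta)d_0/b}\to0$. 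For the second factor, since $\theta d_0\ge a$ and both local norms are uniformly bounded, I bound its power by a constant times $\|\nah u_n\|^a_{L^a(B_{2R})}+|u_n|^{b'}_{L^b(B_{2R})}$ with suitable exponents. The $|u_n|_{L^b}$ piece needs a little care; it is cleanest to raise to the power $\max(a,b)$ and use $\sum_k x_k^{s}\le(\sum_k x_k^{\min})\cdot\sup_k x_k^{s-\min}$. Summing over $k$ with bounded overlap then gives $\int_\H|u_n|^{d_0}\le o(1)\cdot C\big(\|\nah u_n\|_a^a+|u_n|_b^b\big)\to0$.

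\emph{Main obstacle.} The delicate point is the exponent bookkeeping in Step 2: the local right-hand side must appear to a power at least $a$ (for the gradient) and at least $b$ (for the $L^b$ part), so that summation over the cover is controlled by the global norms. I expect to handle this by choosing $d_0$ close enough to $a^*$ that $\theta d_0\ge\max(a,b)$. This is possible when $b<a^*$, because $\theta d_0\to a^*$ as $d_0\to a^*$. The rest is the standard Lions argument, once the uniform local inequality and the bounded-overlap cover on $\H$ are in place.
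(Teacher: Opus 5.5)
Your route is the one the paper intends. Its proof only says that Lions' Lemma~I.1 carries over with the Folland--Stein inequality and a bounded-overlap cover by gauge balls. Your reduction to a single $d_0$, the global $L^{a^*}$ bound (with $a^*=\frac{aQ}{Q-a}$), the maximal separated cover and the choice $\theta d_0\geq\max\{a,b\}$ in Step 2 are all correct.

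The one step that does not close as written is Step 1 when $b<a$. With a plain cutoff, $\|w\nah\eta\|_{L^a}$ lives on $B_{2R}(\xi)\setminus B_R(\xi)$. Interpolating it between $L^b$ and $L^{a^*}$ leaves a factor $|w|^{s}_{L^{a^*}(B_{2R}(\xi)\setminus B_R(\xi))}$ on the right, where $\frac1a=\frac{1-s}{b}+\frac{s}{a^*}$. This factor is not controlled by the left-hand side $|\eta w|_{a^*}$ (or $|w|_{L^{a^*}(B_R(\xi))}$), since $\eta$ may vanish there. Putting $B_{2R}$ on the right-hand side does not remove it: the $L^{a^*}$ norm you are trying to estimate simply reappears on a larger set.

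The repair is short and uses only what you already have. The cutoff argument plus H\"older on a ball of finite measure gives, with $C$ independent of $\xi$,
\[
|w|_{L^{a^*}(B_R(\xi))}\leq C\left(\|\nah w\|_{L^a(B_{2R}(\xi))}+|w|_{L^{m}(B_{2R}(\xi))}\right),\qquad m=\max\{a,b\}.
\]
Moreover, $\{u_n\}$ is bounded in $L^m(\mathbb{H}^N)$: by hypothesis if $m=b$, and by interpolating your global $L^b$ and $L^{a^*}$ bounds if $m=a\in(b,a^*)$. Since $\theta d_0\geq m$, Step 2 then goes through verbatim, using $\sum_k|u_n|^m_{L^m(B_{2R}(\xi_k))}\leq M|u_n|_m^m$ with $M$ your overlap constant.

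Alternatively, you can keep the $L^b$ form of your local inequality by using $\eta^{\ell}$ with $\ell(1-s)\geq1$. Since $\eta^{\ell-1}\leq\eta^{\ell s}$, one gets $\|w\,\nah(\eta^\ell)\|_{L^a}\leq C|w|_{L^b(B_{2R}(\xi))}^{1-s}|\eta^\ell w|_{a^*}^{s}$. Young's inequality then absorbs this into $|\eta^\ell w|_{a^*}$, which is finite because $u_n\in L^{a^*}(\mathbb{H}^N)$.
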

	\begin{proof}
		The proof of \cite[Lemma~I.1]{lions1984concentration} is a covering argument combined with the Sobolev and H\"older inequalities on balls of a fixed radius. It carries over directly  once one uses the Folland--Stein inequality in place of the Sobolev inequality, and observes that the gauge balls $\{B_R(\xi)\}_{\xi \in \H}$ satisfy the doubling property $\left|B_{2R}(\xi)\right|=2^{Q}\left|B_R(\xi)\right|$ and admit, for each fixed $R$, a covering of $\H$ with uniformly bounded overlap. We omit the details. \qed
	\end{proof}

	\section{Minimax level}
	In this section, we establish the upper bound on the minimax level.
    Set
    \begin{equation}{\label{eq:qstar}}
q_* := 2\alpha Q^{*}-2\alpha\,\min\left\{1,\ 2-\tfrac{1}{\alpha}\right\}
=\begin{cases} 2\alpha Q^{*}+2-4\alpha, & \tfrac12<\alpha<1,\\[2pt]
2\alpha Q^{*}-2\alpha, & \alpha \geq 1,\end{cases}
\end{equation}
	and assume throughout this section that
	\begin{equation}{\label{eq:qcond}}
		\max\{4\alpha,\,q_*\}<q<2\alpha Q^{*}.
	\end{equation}

	\begin{rem}{\label{rem:qstar}}
    For $\alpha \geq 1$, the restriction \eqref{eq:qcond} is therefore effective only when
$N=1$, while for $\tfrac12<\alpha<1$, it is effective precisely when
$Q<\frac{2(4\alpha-1)}{2\alpha-1}$, a threshold which increases without bound as
$\alpha \downarrow \tfrac12$. For instance, when $N=1$ and $\alpha=1$, \eqref{eq:qcond}
reads $q>6$ within  $(4,8)$.
	\end{rem}

	\begin{lemma}{\label{lemma:l5}}
		Assume \eqref{eq:qcond}. The minimax level $c$ defined in \eqref{eq:C} satisfies
		\begin{align*}
			c< \frac{S^{Q/2}}{2\alpha Q}.
		\end{align*}
	\end{lemma}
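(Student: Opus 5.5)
We plan to follow the Brezis--Nirenberg strategy adapted to the transformed functional \eqref{eq:I3}. Let $U(\xi)=\left((1+|\zeta|^2)^2+t^2\right)^{-\frac{Q-2}{4}}$ (suitably normalised) be the Jerison--Lee extremal of the Folland--Stein inequality. Put $U_\e(\xi)=\e^{-\frac{Q-2}{2}}U(\delta_{1/\e}\xi)$, fix a cut-off $\phi\in C_c^\infty(B_2(0))$ with $\phi\equiv1$ on $B_1(0)$, and set $u_\e=\phi U_\e$. We take from \cite{citti1995semilinear} the standard estimates
\begin{align*}
\int_\H|\nah u_\e|^2d\xi=K_1+O(\e^{Q-2}),\quad \int_\H|u_\e|^{Q^*}d\xi=K_2+O(\e^{Q}),\quad \frac{K_1}{K_2^{2/Q^*}}=S,
\end{align*}
together with $\int|u_\e|^{m}d\xi\geq C\e^{Q-\frac{(Q-2)m}{2}}$ for $m>\frac{Q}{Q-2}$ and $|u_\e|_2^2=O(\e^2)$ for $Q>4$ (with $O(\e^2|\ln\e|)$ for $Q=4$). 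By Lemma \ref{lemma:fibering} and Lemma \ref{lemma:l3}, a suitable ray $t\mapsto tTu_\e$, $t\in[0,1]$, belongs to $\Ga$, so it suffices to show $\sup_{t>0}\I(tu_\e)<\frac{S^{Q/2}}{2\alpha Q}$ for small $\e$.

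Next we isolate the critical part. We write $\I(tu_\e)$ in its original form and use $|g(s)|^{2\alpha Q^*}\approx(2\alpha)^{Q/(Q-2)}|s|^{Q^*}$ for large $s$, i.e.\ the decomposition \eqref{eq:I3}. This gives
\begin{align*}
\I(tu_\e)\le \frac{t^2}{2}\int|\nah u_\e|^2-\frac{(2\alpha)^{\frac2{Q-2}}t^{Q^*}}{Q^*}\int|u_\e|^{Q^*}+\frac{V_M t^2}{2}\int u_\e^2+\int|H_0(tu_\e)|-\frac{\lambda}{q}\int|g(tu_\e)|^q,
\end{align*}
where $H_0(s)=\frac1{2\alpha Q^*}|g(s)|^{2\alpha Q^*}-\frac{(2\alpha)^{2/(Q-2)}}{Q^*}|s|^{Q^*}$ is the correction between the exact critical term and its asymptotic model. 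The maximiser $t_\e$ stays in a compact subset of $(0,\infty)$ uniformly in $\e$, by Lemma \ref{lemma:l2} and the growth argument of Lemma \ref{lemma:l3}. Maximising the first two terms over $t$ gives $\frac1Q\big((2\alpha)^{-\frac{2}{Q-2}}\big)^{\frac{Q-2}{2}}S^{Q/2}+O(\e^{Q-2})=\frac{S^{Q/2}}{2\alpha Q}+O(\e^{Q-2})$.

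The main obstacle is comparing the negative term $\frac{\lambda}{q}\int|g(tu_\e)|^q$ with the error terms. The positive errors are $O(\e^{Q-2})$, the $L^2$ term $O(\e^2)$, and $\int|H_0(tu_\e)|$. To bound the last one we need a quantitative version of $(g_5)$. From the ODE, $g(s)^{2\alpha}=(2\alpha)^{1/2}s+O(1+s^{(2-2\alpha)/... })$; more precisely we expect $|g(s)|^{2\alpha Q^*}-(2\alpha)^{\frac{Q}{Q-2}}|s|^{Q^*}=O(|s|^{Q^*-\min\{1,2-1/\alpha\}})$. Integrated against the concentrating profile, where $u_\e\sim\e^{-(Q-2)/2}$, this should cost $\e^{\frac{Q-2}{2}\min\{1,2-\frac1\alpha\}}$ relative to the leading order. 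For the good term we use $(g_6)$ on $\{tu_\e\ge1\}$ to get $\int|g(tu_\e)|^q\ge C\int|u_\e|^{q/2\alpha}\ge C\e^{Q-\frac{(Q-2)q}{4\alpha}}$. The condition $q>q_*$ is exactly what makes this exponent smaller than the correction exponent, $\min\{1,2-\frac1\alpha\}\frac{Q-2}{2}$, and also smaller than $Q-2$ and than $2$ (when $Q\ge4$). With that, $\sup_t\I(tu_\e)\le\frac{S^{Q/2}}{2\alpha Q}+O(\e^{\text{larger}})-C\e^{Q-\frac{(Q-2)q}{4\alpha}}<\frac{S^{Q/2}}{2\alpha Q}$ for small $\e$. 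We would first derive the precise expansion of $g$ at infinity from the ODE and check these exponent comparisons; if the $L^2$ error $O(\e^2)$ becomes critical when $Q=4$, we would use the logarithm to absorb it.
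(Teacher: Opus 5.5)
Your proposal is correct and follows essentially the paper's route. Both arguments use truncated Jerison--Lee extremals and the splitting \eqref{eq:I3}, control the nonnegative defect $\Theta=-H_0$ through the behaviour of $\sqrt{2\alpha}\,s-g(s)^{2\alpha}$ at infinity, and compare exponents against $\e^{Q-\frac{(Q-2)q}{4\alpha}}$ to obtain exactly $q>q_*$. Your global bound $\Theta(s)\le Cs^{Q^*-\min\{1,2-1/\alpha\}}$ together with $(g_6)$ on $\{tu_\e\ge1\}$ only lets you skip the paper's intermediate radius $R_\mu=\mu^\beta$, Claim 1 and the annulus estimates.

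There are two minor slips. At $\alpha=1$ the defect is $\asymp s^{Q^*-1}\ln s$ rather than $O(s^{Q^*-1})$; this is harmless because $q>q_*$ is strict. Also, beating the $O(\e^2)$ (or $O(\e^2|\ln\e|)$) term uses $q>4\alpha$, not $q>q_*$.
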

	\begin{proof}
		It is enough to prove that
		\begin{align*}
			\sup_{t \geq 0} \I(tw) < \frac{S^{Q/2}}{2\alpha Q}, \text{ for some } w \in S^2_1(\H)\setminus\{0\}.
		\end{align*}
		Fix $R_0>0$ and choose $\phi \in C_c^\infty(\H)$ such that $0\leq \phi \leq 1$, $\phi\equiv1$ on $B_{R_0}(0)$ and $\phi\equiv0$ on $\H \setminus B_{2R_0}(0).$ Set $R_\mu=\mu^{\beta}$, where
		\begin{align*}
			\frac{1}{2}<\beta< \frac{q}{2\alpha Q^{*}},
		\end{align*}
		so that $R_\mu<R_0$ for all small $\mu$. Let
		\begin{align*}
			W_\mu(\xi)= \frac{C \mu^{\frac{Q-2}{2}}}{\left(t^2+(\mu^2+\left|x\right|^2+\left|y\right|^2)^2\right)^{\frac{Q-2}{4}}}, \quad \xi =(x,y,t)\in \H.
		\end{align*}
		It follows from \cite{citti1995semilinear} that, for a suitable choice of $C=C(Q)$, the function $W_\mu$ is an entire solution of $-\Delta_{\mathbb{H}} u=u^{Q^{*}-1}$ and satisfies
		\begin{align*}
			\int_\H \left|\nah W_\mu \right|^2 d\xi =\int_\H \left| W_\mu \right|^{Q^{*}}d\xi= S^{\frac{Q}{2}},
		\end{align*}
		and
		\begin{align*}
			\int_{\H \setminus B_{R_0}(0)} \left|\nah W_\mu \right|^2 d\xi = O\left({\mu}^{Q-2}\right), \quad \int_{\H \setminus B_{R_0}(0)} \left|W_\mu \right|^{Q^{*}} d\xi = O\left(\mu^{Q}\right) \quad \text{ as } \mu \to0.
		\end{align*}

		Define
		\begin{align*}
			v_\mu(\xi)= \frac{\phi(\xi)W_\mu(\xi)}{\left(\int_{B_{2R_0}(0)} \left|\phi W_\mu \right|^{Q^{*}} d\xi\right)^{\frac{1}{Q^{*}}}} \quad \text{ and } \quad T_\mu = \int_\H \left|\nah v_\mu \right|^2 d\xi.
		\end{align*}
		Then $\int_\H \left|v_\mu\right|^{Q^{*}}d\xi=1$, and using the above estimates we conclude that
		\begin{equation}{\label{eq:E5}}
			T_\mu= S +O\left({\mu}^{Q-2}\right).
		\end{equation}

		\textbf{Claim 1:} There exist  $d_1, d_2>0$, independent of $\mu$, and $\mu_0>0$ such that
		\begin{align*}
			d_1 \leq \frac{g(v_\mu)}{{v_\mu}^\frac{1}{2\alpha}} \leq d_2  \text{ for all } \mu \in (0,\mu_0) \text{ and } \xi \in B_{R_\mu}(0).
		\end{align*}
		By Lemma~\ref{lemma:l1}, $\lim_{s\to \infty} g(s)s^{-\frac{1}{2\alpha}}=(2\alpha)^{\frac{1}{4\alpha}}$, so there exists $t_0 >0$ such that
		\begin{equation}{\label{eq:E6}}
			(2\alpha)^{\frac{1}{4\alpha}}-\ga < \frac{g(t)}{t^\frac{1}{2\alpha}}< (2\alpha)^{\frac{1}{4\alpha}}+\ga  \text{ for all } t \geq t_0,
		\end{equation}
		where $0<\ga < (2\alpha)^{\frac{1}{4\alpha}}.$ For all $\xi \in B_{R_\mu}(0)$ and $\mu$ small, we have $\phi(\xi)=1$ and $\varrho(\xi)<R_\mu$, so
		\begin{align*}
			v_\mu(\xi)&= \frac{W_\mu(\xi)}{\left(\int_{B_{2R_0}(0)} \left|\phi W_\mu \right|^{Q^{*}} d\xi\right)^{\frac{1}{Q^{*}}}} \\& \geq \frac{W_\mu(\xi)}{\left(\int_\H \left|W_\mu \right|^{Q^{*}} d\xi\right)^{\frac{1}{Q^{*}}}}=\frac{W_\mu(\xi)}{\left(S^{\frac{Q}{2}}\right)^{\frac{1}{Q^{*}}}}= \frac{1}{S^{\frac{Q-2}{4}}}\frac{C \mu^{\frac{Q-2}{2}}}{\left(t^2+(\mu^2+\left|x\right|^2+\left|y\right|^2)^2\right)^{\frac{Q-2}{4}}} \\& \geq \frac{1}{S^{\frac{Q-2}{4}}} \frac{C \mu^{\frac{Q-2}{2}}}{\left(\mu^{4\beta}+(\mu^2+\mu^{2\beta})^2\right)^{\frac{Q-2}{4}}}\\& \geq \frac{1}{S^{\frac{Q-2}{4}}} \frac{C }{\mu^{(2\beta-1)\frac{(Q-2)}{2}}\left(1+(\mu^{2-2\beta}+1)^2\right)^{\frac{Q-2}{4}}} \to \infty \text{ as } \mu \to 0.
		\end{align*}
		Therefore there exists $\mu_0>0$, independent of $\xi$, such that
		\begin{equation}{\label{eq:E7}}
			v_\mu(\xi) \geq t_0, \text{ for all } \mu \in (0, \mu_0) \text{ and } \xi \in B_{R_\mu}(0).
		\end{equation}
		Combining \eqref{eq:E6} and \eqref{eq:E7} yields the claim.

		By \eqref{eq:I3},
		\begin{align*}
			\I(tv_\mu)= \frac{t^2}{2} \int_\H \left(\left|\nah v_\mu\right|^2 + V(\xi)v_\mu^2\right)d\xi- t^{Q^{*}}\frac{(2\alpha)^{\frac{2}{Q-2}}}{ Q^{*}} \int_\H \left|v_\mu\right|^{Q^{*}} d\xi- \int_\H H(\xi,tv_\mu)d\xi.
		\end{align*}
		By Lemma \ref{lemma:fibering}, the maximum of $t \mapsto \I(tv_\mu)$ over $t>0$ is attained at the unique $t_\mu:=t(v_\mu)$ on  solving $\I^{\prime}(t_\mu v_\mu)v_\mu=0$, that is,
		\begin{equation}{\label{eq:tmueq}}
			t_\mu^2\left(T_\mu + \int_\H V(\xi)v_\mu^2 d\xi\right)= (2\alpha)^{\frac{2}{Q-2}} t_\mu^{Q^{*}} + \int_\H h(\xi,t_\mu v_\mu)\,t_\mu v_\mu\, d\xi.
		\end{equation}

		\textbf{Claim 2:} there are $0<c_1<c_2<\infty$, independent of $\mu$, with $c_1 \leq t_\mu \leq c_2$ for all small $\mu$.

		Write $a_\mu=T_\mu+\int_\H Vv_\mu^2d\xi$, $b_\mu=\int_\H v_\mu^2 d\xi$ and $e_\mu=\int_\H \left|v_\mu\right|^{q/2\alpha} d\xi$. By Remark \ref{rem:H}, for any $\epsilon>0$,
		\begin{align*}
			\left|\int_\H h(\xi,t_\mu v_\mu)t_\mu v_\mu d\xi\right| \leq \epsilon t_\mu^{2}b_\mu+\epsilon t_\mu^{Q^{*}}+D_\epsilon t_\mu^{\frac{q}{2\alpha}}e_\mu .
		\end{align*}
		Fix $\epsilon<\frac12 (2\alpha)^{\frac{2}{Q-2}}$. From \eqref{eq:tmueq},
		\begin{align*}
			\tfrac12 (2\alpha)^{\frac{2}{Q-2}}t_\mu^{Q^{*}} \leq (a_\mu+\epsilon b_\mu)t_\mu^{2}+D_\epsilon e_\mu t_\mu^{\frac{q}{2\alpha}},
		\end{align*}
		and since $2<\frac{q}{2\alpha}<Q^{*}$ while $a_\mu,b_\mu,e_\mu$ are bounded, $t_\mu \leq c_2$. Dividing \eqref{eq:tmueq} by $t_\mu^{2}$ and using the same bound,
		\begin{align*}
			a_\mu-\epsilon b_\mu \leq \left((2\alpha)^{\frac{2}{Q-2}}+\epsilon\right)t_\mu^{Q^{*}-2}+D_\epsilon e_\mu t_\mu^{\frac{q}{2\alpha}-2}.
		\end{align*}
		By \eqref{eq:E5}, $a_\mu \to S>0$, while $b_\mu=O(\mu^{2})$ and $e_\mu=O(\mu^{Q-\frac{q(Q-2)}{4\alpha}})\to0$ because $q<2\alpha Q^{*}$. Hence the left-hand side is at least $\frac{S}{2}$ for small $\mu$, which forces $t_\mu \geq c_1>0$ and proves Claim 2.

Since $\int_\H \left|v_\mu\right|^{Q^{*}}d\xi=1$, \eqref{eq:I3} gives, for every $t>0$,
\begin{align*}
\I(tv_\mu)=\frac{t^{2}}{2}\,a_\mu-\frac{(2\alpha)^{\frac{2}{Q-2}}}{Q^{*}}\,t^{Q^{*}}
-\int_\H H(\xi,tv_\mu)\,d\xi .
\end{align*}
The elementary maximisation
$\sup_{t>0}\big(\frac{t^{2}}{2}A-\frac{B}{Q^{*}}t^{Q^{*}}\big)
=\frac{A^{Q/2}}{Q\,B^{\frac{Q-2}{2}}}$, applied with $A=a_\mu$ and
$B=(2\alpha)^{\frac{2}{Q-2}}$, for which $B^{\frac{Q-2}{2}}=2\alpha$, together with
Claim 2, yields
\begin{align}
\I(t_\mu v_\mu) \leq& \frac{a_\mu^{Q/2}}{2\alpha Q}
-\inf_{t\in[c_1,c_2]}\int_\H H(\xi,tv_\mu)\,d\xi \notag \\
=& \frac{1}{2\alpha Q}\left(S+O\left(\mu^{Q-2}\right)
+\int_\H V(\xi)v_\mu^2\,d\xi\right)^{\frac{Q}{2}}
-\inf_{t\in[c_1,c_2]}\int_\H H(\xi,tv_\mu)\,d\xi \notag \\
\leq& \frac{S^{\frac{Q}{2}}}{2\alpha Q}+O\left(\mu^{Q-2}\right)
+K_1\int_\H V(\xi)v_\mu^2\,d\xi
-\inf_{t\in[c_1,c_2]}\int_\H H(\xi,tv_\mu)\,d\xi , \label{eq:E8}
\end{align}
where the last step uses \eqref{eq:E5} and the elementary inequality
$(x+y)^{m}\leq x^{m}+my(x+y)^{m-1}$ for $x,y>0$, $m\geq1$, with $x=S$, $m=\frac{Q}{2}$
and $y=O(\mu^{Q-2})+\int_\H V(\xi)v_\mu^2\,d\xi$, which is bounded; here
$K_1:=\frac{1}{4\alpha}\big(S+\sup_\mu y_\mu\big)^{\frac{Q-2}{2}}$ depends only on
$Q,\alpha,S$ and $V_M$.\\

		\textbf{Claim 3:} $\lim\limits_{\mu \to 0^+}\frac{1}{\mu^{r}}\int_{B_{2R_0}(0)} \left(K_1 V(\xi) v_\mu^2- H(\xi,tv_\mu)\right)d\xi=-\infty$ uniformly for $t \in [c_1,c_2]$, where $r=Q-2.$

		Since $t$ ranges in a fixed compact subset of $(0,\infty)$, it only rescales the constants below; we suppress it and write $v_\mu$ for $tv_\mu$. Let
		\begin{align*}
			I_1= \frac{1}{\mu^r}\int_{B_{R_\mu}(0)}\left(K_1 V(\xi) v_\mu^2- H(\xi,v_\mu)\right)d\xi
		\end{align*}
		and
		\begin{align*}
			I_2= \frac{1}{\mu^r}\int_{{B_{2R_0}(0)}\setminus{B_{R_\mu}(0)}}\left(K_1 V(\xi) v_\mu^2- H(\xi,v_\mu)\right)d\xi.
		\end{align*}
		Using Claim $1$ and $V(\xi) \leq V_M$,
		\begin{small}
			\begin{align}
				I_1=& \frac{1}{\mu^r}\int_{B_{R_\mu}(0)}\left(K_1 V(\xi) v_\mu^2- H(\xi,v_\mu)\right)d\xi \notag \\=& \frac{1}{\mu^r}\int_{B_{R_\mu}(0)}\left(K_1 V(\xi) v_\mu^2- \left(\frac{\lambda \left|g(v_\mu)\right|^{q}}{q}  + \frac{\left|g(v_\mu)\right|^{2\alpha Q^{*}}}{2\alpha Q^{*}} + \frac{1}{2}V(\xi)(v_\mu^2- g^2(v_\mu))- \frac{(2\alpha)^{\frac{2}{Q-2}}}{Q^{*}} \left|v_\mu\right|^{Q^{*}}\right)\right)d\xi \notag \\ \leq & \frac{1}{\mu^r}\int_{B_{R_\mu}(0)}\left(K_1 V_M v_\mu^2- \frac{ \lambda}{q } \left(\frac{ \left|g(v_\mu)\right|}{v_\mu^{\frac{1}{2\alpha}}}\right)^{q}v_\mu^{\frac{q}{2\alpha}}+\Theta(v_\mu)\right)d\xi \notag \\ \leq & \frac{1}{\mu^r}\int_{B_{R_\mu}(0)}\left(K_1 V_M v_\mu^2- \frac{\lambda d_1^{q}}{q } v_\mu^{\frac{q}{2\alpha}}+\Theta(v_\mu)\right)d\xi,\notag
			\end{align}
		\end{small}
		where
		\begin{equation}{\label{eq:Theta}}
			\Theta(s):=\frac{(2\alpha)^{\frac{2}{Q-2}}}{Q^{*}}\left|s\right|^{Q^{*}}-\frac{\left|g(s)\right|^{2\alpha Q^{*}}}{2\alpha Q^{*}} \;=\; \frac{1}{2\alpha Q^{*}}\left[\left(\sqrt{2\alpha}\,\left|s\right|\right)^{Q^{*}}-\left|g(s)\right|^{2\alpha Q^{*}}\right] \;\geq\; 0,
		\end{equation}
		the second equality using $\frac{Q^{*}}{2}-1=\frac{2}{Q-2}$ and the sign following from $(g_4)$.


		We first estimate $\Theta$. Put $\psi(s)=\sqrt{2\alpha}\,s-g(s)^{2\alpha}$ for $s>0$, so that $\psi(0)=0$ and, by the equation for $g$,
		\begin{align*}
			\psi^{\prime}(s)=\sqrt{2\alpha}-2\alpha g(s)^{2\alpha-1}g^{\prime}(s)
				=\sqrt{2\alpha}\left[1-\left(1+\tfrac{1}{2\alpha}g(s)^{2-4\alpha}\right)^{-\frac12}\right]>0 .
		\end{align*}
		By $(g_5)$, $g(s)^{2-4\alpha}\sim (2\alpha)^{\frac{1-2\alpha}{2\alpha}}s^{\frac1\alpha-2}$ as $s \to \infty$, and since $\frac1\alpha-2<0$ we get $\psi^{\prime}(s)\asymp s^{\frac1\alpha-2}$, whence
		\begin{equation}{\label{eq:psiasym}}
			\psi(s)\asymp \begin{cases} s^{\frac1\alpha-1}, & \tfrac12<\alpha<1,\\[2pt] \ln s, & \alpha=1,\\[2pt] 1, & \alpha>1,\end{cases} \qquad \text{ as } s \to \infty .
		\end{equation}
		Expanding $\big(1-\tfrac{\psi(s)}{\sqrt{2\alpha}\,s}\big)^{Q^{*}}$ in \eqref{eq:Theta} gives $\Theta(s)\asymp s^{Q^{*}-1}\psi(s)$, so by \eqref{eq:psiasym}
		\begin{equation}{\label{eq:Thetabound}}
			\Theta(s) \leq C\,s^{\sigma}\left(1+\ln^{+}s\right) \quad \text{ for } s \geq 1, \qquad \sigma:=Q^{*}-1+\left(\tfrac1\alpha-1\right)^{+},
		\end{equation}
		the logarithm being needed only when $\alpha=1$, where $\sigma=Q^{*}-1$.

		Since $(Q^{*}-1)(Q-2)=Q+2>Q$, the exponent $\sigma$ satisfies $\sigma(Q-2)>Q$, so 
       the computation leading to \eqref{eq:J1est} below, with $\tfrac{q}{2\alpha}$ replaced
by $\sigma$ and the logarithmic factor controlled by
$\ln^{+}v_\mu \leq \ln\big(C\mu^{-\frac{Q-2}{2}}\big) \leq C\ln\tfrac1\mu$, yields
		\begin{equation}{\label{eq:Thetaint}}
			\frac{1}{\mu^{Q-2}}\int_{B_{R_\mu}(0)}\Theta(v_\mu)\,d\xi \;\leq\; C\,\mu^{\,2-\frac{\sigma(Q-2)}{2}}\left(1+\ln\tfrac1\mu\right).
		\end{equation}
		Comparing exponents with the negative term, whose order is $\mu^{\,2-\frac{q(Q-2)}{4\alpha}}$, we see that \eqref{eq:Thetaint} is of strictly lower order precisely when $\frac{q}{2\alpha}>\sigma$, that is, when $q>2\alpha\sigma=q_*$; the logarithm at $\alpha=1$ is absorbed because the inequality $q>q_*$ is strict. This is hypothesis \eqref{eq:qcond}. Consequently
		\begin{align*}
			I_1 \leq \frac{1}{\mu^{Q-2}}\int_{B_{R_\mu}(0)}\left(K_1 V_M v_\mu^2-\frac{\lambda d_1^{q}}{2q}v_\mu^{\frac{q}{2\alpha}}\right)d\xi \quad \text{ for all small } \mu,
		\end{align*}
		the factor $\frac12$ absorbing the contribution of $\Theta$.

		It remains to estimate the right-hand side. For any $C_1, C_2>0$, consider
		\begin{align} {\label{eq:J1est}}
			J_1& =\frac{1}{\mu^{Q-2}}\int_{B_{R_\mu}(0)}\left(C_1 W_\mu^2- C_2 W_\mu^{\frac{q}{2\alpha}}\right)d\xi
			\\&= \frac{C_1}{\mu^r}\int_{B_{R_\mu}(0)} \frac{ \mu^{Q-2}}{\left(t^2+(\mu^2+\left|x\right|^2+\left|y\right|^2)^2\right)^{\frac{Q-2}{2}}} d\xi  - \frac{C_2}{\mu^r}\int_{B_{R_\mu}(0)} \frac{\mu^{\frac{(Q-2)q}{4\alpha}}}{\left(t^2+(\mu^2+\left|x\right|^2+\left|y\right|^2)^2\right)^{{\frac{(Q-2)q}{8\alpha}}}}d\xi  \notag \\& \leq \frac{C_1}{\mu^r}\int_{B_{R_\mu}(0)} \frac{ \mu^{Q-2}}{\left(t^2+(\left|x\right|^2+\left|y\right|^2)^2 +\mu^4\right)^{\frac{Q-2}{2}}} d\xi  - \frac{C_2}{\mu^r}\int_{B_{\frac{R_\mu}{\mu}}(0)} \frac{\mu^{Q-\frac{(Q-2)q}{4\alpha}}}{\left(t^2+(1+\left|x\right|^2+\left|y\right|^2)^2\right)^{{\frac{(Q-2)q}{8\alpha}}}}d\xi  \notag\\& \leq \frac{C_1}{\mu^r} \int_0^{R_\mu} \frac{\mu^{Q-2}}{(\mu^4 + \rho^4)^{\frac{Q-2}{2}}}\rho^{Q-1}d\rho-\frac{C_2 \mu^{Q-\frac{(Q-2)q}{4\alpha}}}{\mu^r}\left(C_3 + \int_1^{\frac{R_\mu}{\mu}}\frac{\rho^{Q-1}}{\rho^{\frac{(Q-2)q}{2\alpha}}}d\rho\right) \notag \\& = \frac{C_1 \mu^2}{\mu^r} \int_0^{\frac{R_\mu}{\mu}} \frac{z^{Q-1}}{(1+z^4)^{\frac{Q-2}{2}}}dz-\frac{C_2 \mu^{Q-\frac{(Q-2)q}{4\alpha}}}{\mu^r} \left(C_3 + C_4 \mu^{(1-\beta)\left(\frac{(Q-2)q}{2\alpha}-Q\right)} \right)\notag \\&= \frac{1}{\mu^{Q-4}} \left(C_1 \int_0^{\frac{R_\mu}{\mu}} \frac{z^{Q-1}}{(1+z^4)^{\frac{Q-2}{2}}}dz- \frac{C_2}{\mu^{(Q-2)\left(\frac{q}{4\alpha}-1 \right)}} \left(C_3+ C_4 \mu^{(1-\beta)\left(\frac{(Q-2)q}{2\alpha}-Q\right)} \right) \right).\notag
		\end{align}
		For $Q>4$,
		\begin{align*}
			\int_0^{\frac{R_\mu}{\mu}} \frac{z^{Q-1}}{(1+z^4)^{\frac{Q-2}{2}}}dz <\infty, \quad \text{ and } \quad \frac{(Q-2)q}{2\alpha}-Q>2(Q-2)-Q=Q-4>0.
		\end{align*}
		Moreover $2-\frac{q(Q-2)}{4\alpha}<4-Q$ because $q>4\alpha$, so the negative term dominates and $J_1 \to -\infty$ as $\mu \to 0.$ For $Q=4$,
		\begin{align*}
			J_1 &\leq  C_1 \int_0^{\frac{R_\mu}{\mu}} \frac{z^3}{1+z^4}dz- \frac{C_2}{\mu^{2\left(\frac{q}{4\alpha}-1 \right)}} \left(C_3+ C_4 \mu^{4(1-\beta)\left(\frac{q}{4\alpha}-1\right)} \right) \\&= C_1 \ln\left(1+\left(\frac{R_\mu}{\mu}\right)^4\right)-\frac{C_2}{\mu^{2\left(\frac{q}{4\alpha}-1 \right)}} \left(C_3+ C_4 \mu^{4(1-\beta)\left(\frac{q}{4\alpha}-1\right)} \right) \\& \to -\infty \text{ as } \mu \to 0, \text{ since } \lim_{\mu \to 0} \frac{\mu^{-2\left(\frac{q}{4\alpha}-1 \right)}}{\ln\left(1+\left(\frac{R_\mu}{\mu}\right)^4\right)}= +\infty.
		\end{align*}
		We now bound $I_2$ from above. By Remark~\ref{rem:H} and $V \leq V_M$,
		\begin{equation}{\label{eq:I2bound}}
			I_2 \leq \frac{C}{\mu^{Q-2}}\int_{{B_{2R_0}(0)}\setminus{B_{R_\mu}(0)}}\left(v_\mu^2+ \left|v_\mu\right|^{\frac{q}{2\alpha}}\right)d\xi + \frac{1}{\mu^{Q-2}}\int_{{B_{2R_0}(0)}\setminus{B_{R_\mu}(0)}}\Theta(v_\mu)d\xi .
		\end{equation}

       Since $W_\mu(\xi)\leq C\mu^{\frac{Q-2}{2}}\varrho(\xi)^{-(Q-2)}$ on the annulus, the
two terms of the first integral must be estimated separately:
\begin{align*}
\int_{B_{2R_0}(0)\setminus B_{R_\mu}(0)}v_\mu^{2}\,d\xi
=\begin{cases}
O\big(\mu^{Q-2}R_\mu^{4-Q}\big)=O\big(\mu^{Q-2-\beta(Q-4)}\big), & Q>4,\\[2pt]
O\big(\mu^{2}\ln\tfrac1\mu\big), & Q=4,
\end{cases}
\end{align*}
while $q>4\alpha \geq \alpha Q^{*}$ for $Q\geq4$ makes the radial exponent
$Q-1-\tfrac{q(Q-2)}{2\alpha}$ strictly less than $-1$, so
\begin{align*}
\int_{B_{2R_0}(0)\setminus B_{R_\mu}(0)}\left|v_\mu\right|^{\frac{q}{2\alpha}}d\xi
= O\left(\mu^{\frac{q(Q-2)}{4\alpha}}R_\mu^{\,Q-\frac{q(Q-2)}{2\alpha}}\right)
= O\left(\mu^{\frac{q(Q-2)}{4\alpha}+\beta\left(Q-\frac{q(Q-2)}{2\alpha}\right)}\right).
\end{align*}
After division by $\mu^{Q-2}$, the first is dominated by the negative term of $J_1$, of
order $\mu^{2-\frac{q(Q-2)}{4\alpha}}$, because $2-\tfrac{q(Q-2)}{4\alpha}<-\beta(Q-4)$
follows from $q>4\alpha$ and $\beta<1$; for the second, the comparison of exponents
reduces to $(1-\beta)\big(\tfrac{q(Q-2)}{2\alpha}-Q\big)>0$, which holds because
$\beta<1$ and $q>\alpha Q^{*}$.
 For the second integral, $\Theta(s)\leq Cs^{Q^{*}}$ and
		\begin{align*}
			\frac{1}{\mu^{Q-2}}\int_{\H \setminus B_{R_\mu}(0)}W_\mu^{Q^{*}}d\xi = O\left(\mu^{(1-\beta)Q-(Q-2)}\right),
		\end{align*}
		which is dominated by $\mu^{2-\frac{q(Q-2)}{4\alpha}}$ precisely when $q>2\alpha\beta Q^{*}$, that is, when $\beta<\frac{q}{2\alpha Q^{*}}$. This is the second constraint imposed on $\beta$ at the start of the proof; it is compatible with $\beta>\frac12$ because $q>4\alpha$ gives $\frac{q}{2\alpha Q^{*}}>\frac{2}{Q^{*}}=\frac{Q-2}{Q}\geq\frac12$ for $Q\geq4$. Hence $I_1+I_2 \to -\infty$, which is Claim 3, and \eqref{eq:E8} completes the proof. \qed
	\end{proof}

	\section{Proof of Theorem \ref{theorem:T}}

	\textbf{Proof of Theorem~\ref{theorem:T}.} From Lemma~\ref{lemma:l4}, $\{v_n\}$ is bounded in $S^2_1(\H)$, so there exists $v\in S^2_1(\H)$ such that $v_n \rightharpoonup v$ in $S^2_1(\H)$, $v_n \to v$ in $L_{loc}^d(\H)$ for $d \in [2, Q^{*})$, and $v_n(\xi) \to v(\xi)$ a.e. in $\H.$ It follows that
	\begin{align*}
		\I^{\prime}(v)w=0 \quad \text{ for any } w \in C_0^{\infty}(\H),
	\end{align*}
	so $v$ is a weak solution of \eqref{eq:P1}. It remains to show that $v$ is nontrivial, and we argue by contradiction: suppose $v\equiv 0.$

	The next result is an instance of the concentration-compactness principle \cite{willem1996minimax}.
	\begin{lemma}{\label{lemma:l8}}
		There exist $r_0, \varrho_0>0$ and a sequence $\{\xi_n\} \subset \H$ such that, up to a subsequence,
		\begin{align*}
			\liminf_{n \to\infty} \int_{B_{r_0}(\xi_n)}\left|v_n\right|^2 d\xi \geq \varrho_0.
		\end{align*}
	\end{lemma}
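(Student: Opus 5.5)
We argue by contradiction: suppose that for every $r_0>0$
\begin{align*}
\lim_{n\to\infty}\sup_{\xi\in\H}\int_{B_{r_0}(\xi)}\left|v_n\right|^2d\xi=0
\end{align*}
along the whole sequence. By Lemma~\ref{lemma:l4}, $\{v_n\}$ is bounded in $L^2(\H)$ and $\{\nah v_n\}$ is bounded in $L^2(\H)$. Lemma~\ref{lemma:l6} with $a=b=2$ then gives $v_n\to0$ in $L^d(\H)$ for every $d\in(2,Q^{*})$. In particular $v_n\to0$ in $L^{q/(2\alpha)}(\H)$, since $2<\frac{q}{2\alpha}<Q^{*}$.

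Next we insert this into the decomposition \eqref{eq:I3}. Let $\epsilon>0$. Remark~\ref{rem:H} together with the boundedness of $\{v_n\}$ gives
\begin{align*}
\limsup_n\int_\H\left|H(\xi,v_n)\right|d\xi\le C\epsilon, \qquad \limsup_n\int_\H\left|h(\xi,v_n)v_n\right|d\xi\le C\epsilon,
\end{align*}
so both integrals tend to $0$. The Cerami condition implies $\I^{\prime}(v_n)v_n\to0$. Writing $a_n=\int_\H(\left|\nah v_n\right|^2+Vv_n^2)\,d\xi$ and $b_n=(2\alpha)^{\frac{2}{Q-2}}\int_\H\left|v_n\right|^{Q^{*}}d\xi$, we obtain
\begin{align*}
a_n-b_n\to0,\qquad \frac{a_n}{2}-\frac{b_n}{Q^{*}}\to c .
\end{align*}
Passing to a subsequence, $a_n\to \ell$ and $b_n\to\ell$, and then $c=\ell\left(\frac12-\frac1{Q^{*}}\right)=\frac{\ell}{Q}$. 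Since $c>0$ by \eqref{eq:cpos}, $\ell>0$.

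We now use the Sobolev constant. Since $V\ge V_0>0$,
\begin{align*}
a_n\ge\int_\H\left|\nah v_n\right|^2d\xi\ge S\left|v_n\right|_{Q^{*}}^2=S\left(\frac{b_n}{(2\alpha)^{\frac{2}{Q-2}}}\right)^{\frac{2}{Q^{*}}}.
\end{align*}
In the limit, $\ell\ge S\,\ell^{\frac{Q-2}{Q}}(2\alpha)^{-\frac{2}{Q}}$, which gives $\ell^{2/Q}\ge S(2\alpha)^{-2/Q}$, i.e.\ $\ell\ge\frac{S^{Q/2}}{2\alpha}$. Therefore $c=\frac{\ell}{Q}\ge\frac{S^{Q/2}}{2\alpha Q}$, which contradicts Lemma~\ref{lemma:l5}. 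Hence vanishing fails: there are $r_0$, $\varrho_0>0$ and points $\xi_n$ with $\int_{B_{r_0}(\xi_n)}\left|v_n\right|^2\ge\varrho_0$ along a subsequence.

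The step that needs care is the exponent bookkeeping $(2\alpha)^{\frac{2}{Q-2}\cdot\frac{Q-2}{Q}}$. It produces exactly the factor $2\alpha$ in the threshold $\frac{S^{Q/2}}{2\alpha Q}$, and it matches the constant $B^{\frac{Q-2}{2}}=2\alpha$ used in Lemma~\ref{lemma:l5}. A second point to check is that Lemma~\ref{lemma:l6} is stated for a fixed radius $R$. Negating the conclusion for one fixed $r_0$ (say $r_0=1$) already gives the vanishing hypothesis for that radius, so this is enough. Finally, the $\epsilon$-smallness of the $L^2$ and $L^{Q^{*}}$ parts in Remark~\ref{rem:H} is uniform in $n$ only because of the boundedness from Lemma~\ref{lemma:l4}, so that bound has to be invoked explicitly at that step.
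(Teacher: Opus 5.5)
Your proposal is correct and is essentially the paper's own proof. In both, vanishing together with Lemma~\ref{lemma:l6} (with $a=b=2$) and Remark~\ref{rem:H} kills the $H$ and $h(\xi,v_n)v_n$ terms, the energy and Nehari relations give $c=\ell/Q$, and the Sobolev constant forces $c\geq S^{Q/2}/(2\alpha Q)$, contradicting Lemma~\ref{lemma:l5}. Your remark that $\ell>0$ (from \eqref{eq:cpos}) is needed before dividing by $\ell^{(Q-2)/Q}$ is a small point the paper leaves implicit.
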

	\begin{proof}
		Suppose not. Then for every $r>0$ we have $\sup_{\xi \in \H}\int_{B_{r}(\xi)}\left|v_n\right|^2d\xi \to 0$ as $n \to \infty$. Applying Lemma~\ref{lemma:l4} and Lemma~\ref{lemma:l6}, $\{v_n\}$ is bounded in $S^2_1(\H)$ and $v_n \to 0$ in $L^d(\H)$ for $d \in (2,Q^{*})$. Fix $\epsilon>0$. By Remark~\ref{rem:H},
		\begin{align*}
			\int_\H \left|H(\xi,v_n)\right|d\xi \leq \epsilon \int_\H \left(v_n^2 + \left|v_n\right|^{Q^{*}}\right)d\xi + C_\epsilon \int_\H \left|v_n\right|^{\frac{q}{2\alpha}}d\xi \leq \epsilon C + o_n(1),
		\end{align*}
		since $\frac{q}{2\alpha} \in (2, Q^{*})$ and $\{v_n\}$ is bounded in $L^2(\H)\cap L^{Q^{*}}(\H)$. Letting $\epsilon \to 0$ gives
		\begin{align*}
			\lim_{n \to \infty} \int_\H H(\xi,v_n)d\xi=0= \lim_{n \to \infty} \int_\H h(\xi,v_n)v_nd\xi.
		\end{align*}
		Let $A=\lim_{n\to \infty}\int_\H \left(\left|\nah v_n\right|^2 + V(\xi)v_n^2\right)d\xi$ and $B= \lim_{n\to \infty} \int_\H \left|v_n\right|^{Q^{*}} d\xi$, along a further subsequence, both limits existing by boundedness. Then, by the definition of $v_n$,
		\begin{align*}
			c= \frac{A}{2}- \frac{(2\alpha)^{\frac{2}{Q-2}}}{Q^{*}}B,\quad A=(2\alpha)^{\frac{2}{Q-2}}B.
		\end{align*}
		Eliminating $B$ and using $\frac{1}{2}-\frac{1}{Q^{*}}=\frac{1}{Q}$, we get $c=\frac{A}{Q}$. The definition of $S$ gives $S B^{\frac{2}{Q^{*}}} \leq A$, and substituting $B=(2\alpha)^{-\frac{2}{Q-2}}A$ together with $1-\frac{2}{Q^{*}}=\frac{2}{Q}$ and $\frac{2}{Q-2}\cdot\frac{2}{Q^{*}}=\frac{2}{Q}$ yields $A^{\frac{2}{Q}} \geq S(2\alpha)^{-\frac{2}{Q}}$, that is, $A \geq \frac{S^{\frac{Q}{2}}}{2\alpha}.$ Hence
		\begin{align*}
			c =\frac{A}{Q} \geq \frac{S^{\frac{Q}{2}}}{2\alpha Q},
		\end{align*}
		which contradicts Lemma~\ref{lemma:l5}. \qed
	\end{proof}

	We first assume that $(V_2)$ holds. Then, for any $\epsilon>0,$ there exists $\rho>0$ such that $\left|V(\xi)-V_\infty \right|< \epsilon$ for $\varrho(\xi) \geq \rho.$ Consider the functional $I_{\lambda, \infty} : S^2_1(\H) \to \R,$
	\begin{align*}
		I_{\lambda,\infty}(v)= \frac{1}{2} \int_\H \left(\left|\nah v\right|^2 + V_\infty v^2\right)d\xi- \frac{(2\alpha)^{\frac{2}{Q-2}}}{ Q^{*}} \int_\H \left|v\right|^{Q^{*}} d\xi- \int_\H H_\infty(v)d\xi,
	\end{align*}
	where
	\begin{align*}
		H_\infty(v)= \frac{\lambda}{q} \left|g(v)\right|^{q} + \frac{1}{2\alpha Q^{*}} \left|g(v)\right|^{2\alpha Q^{*}}+ \frac{1}{2}V_\infty v^2- \frac{1}{2} V_\infty g^2(v)- \frac{(2\alpha)^{\frac{2}{Q-2}}}{Q^{*}} \left|v\right|^{Q^{*}}.
	\end{align*}
	Using Lemma~\ref{lemma:l1}, we have
	\begin{align*}
		\left|I_{\lambda, \infty}(v_n)-\I(v_n)\right|&= \frac{1}{2} \int_\H \left|V_\infty -V(\xi) \right| \left|g(v_n)\right|^2d\xi \\& \leq \frac{1}{2} \int_{B_\rho(0)} \left|V_\infty -V(\xi) \right| \left|v_n\right|^2d\xi + \frac{1}{2} \int_{\H \setminus {B_\rho(0)}} \left|V_\infty -V(\xi) \right| \left|v_n\right|^2d\xi \\& \leq \|V_\infty - V\|_{L^\infty(B_\rho(0))}\int_{B_\rho(0)}\left|v_n\right|^2d\xi + \frac{\epsilon}{2}\sup_n \left|v_n\right|_2^2.
	\end{align*}
	Since $v_n \to v \equiv 0$ in $L^2(B_\rho(0))$ and $\epsilon>0$ is arbitrary, $\left|I_{\lambda,\infty}(v_n)-\I(v_n)\right| \to 0$ as $n \to \infty.$ Consequently $(1+\|v_n\|)(I^{\prime}_{\lambda, \infty}(v_n)-\I^{\prime}(v_n)) \to 0$, since
	\begin{align*}
		\|I^{\prime}_{\lambda, \infty}(v_n)-\I^{\prime}(v_n)\|=\sup_{\|w\| \leq 1} \left|\int_\H \left(V_\infty-V(\xi)\right)g(v_n)g^{\prime}(v_n)w\,d\xi \right| \to 0 \quad \text{ as } n \to \infty.
	\end{align*}
	Thus $\{v_n\}$ is a bounded Cerami sequence for $I_{\lambda,\infty}$ at level $c$. Let $\{\xi_n\}$ be the sequence given in Lemma~\ref{lemma:l8} and define $z_n(\xi)=v_n(\xi_n \circ \xi).$ Then $\{z_n\}$ is bounded in $S^2_1(\H)$ and
	\begin{align*}
		I_{\lambda, \infty}(z_n) \to c,\quad \left(1+\|z_n\|\right)I^{\prime}_{\lambda, \infty}(z_n) \to 0 \quad \text{ as } n \to \infty.
	\end{align*}
	Therefore, up to a subsequence, there exists $z \in S^2_1(\H)$ with $z_n \rightharpoonup z$ in $S^2_1(\H)$, and $z$ is a critical point of $I_{\lambda,\infty}.$ Since $z_n \to z$ in $L^2(B_{r_0}(0))$ and $\int_{B_{r_0}(0)}\left|z_n\right|^2d\xi \geq \varrho_0$ for large $n$ by Lemma~\ref{lemma:l8}, we get $\int_{B_{r_0}(0)}\left|z\right|^2d\xi \geq \varrho_0$, so $z \neq 0$.

	The constant potential $V_\infty$ satisfies $(V_1)$ and $(V_3)$ with $\kappa=0$, so Lemma \ref{lemma:linf} and Lemma \ref{lemma:decay} apply to $z$: it is bounded, lies in $\Ga^{2,\gamma}_{loc}(\H)$ and decays exponentially in the gauge. In particular Proposition \ref{prop:poho} is available for $z$.

	Let $c_\infty$ be the mountain pass level defined by
	\begin{align*}
		c_\infty = \inf_{\sigma \in \Ga_\infty} \sup_{t\in [0,1]} I_{\lambda, \infty}(\sigma(t)),
	\end{align*}
	where $\Ga_\infty= \{\sigma \in C([0,1], S^2_1(\H)): \sigma(0)=0, \sigma(1)\neq 0, I_{\lambda, \infty}(\sigma(1))<0\}.$

	\textbf{Claim:} $c_\infty \leq I_{\lambda, \infty}(z) \leq c.$ Taking into account \eqref{eq:E4}, Lemma~\ref{lemma:l1} and $2<4\alpha < q,$ we deduce
	\begin{align*}
		g^2(z_n)-g(z_n)g^{\prime}(z_n)z_n\geq 0, \quad \frac{1}{2}f(g(z_n))g^{\prime}(z_n)z_n-F(g(z_n)) \geq 0 \quad \text{ for all } n\in \N.
	\end{align*}
	Thus, by Fatou's lemma,
	\begin{align*}
		c&=\limsup_{n \to \infty}\left(I_{\lambda, \infty}(z_n)- \frac{1}{2}I^{\prime}_{\lambda, \infty}(z_n)z_n \right) \\&=\limsup_{n \to \infty} \left( \int_\H \frac{V_\infty}{2}\left(g^2(z_n)-g(z_n)g^{\prime}(z_n)z_n\right) d\xi+ \int_\H \left(\frac{1}{2}f(g(z_n))g^{\prime}(z_n)z_n-F(g(z_n))\right)d\xi\right) \\& \geq \int_\H \frac{V_\infty}{2}\left(g^2(z)-g(z)g^{\prime}(z)z\right)d\xi+ \int_\H \left(\frac{1}{2}f(g(z))g^{\prime}(z)z-F(g(z))\right)d\xi \\&= I_{\lambda, \infty}(z)- \frac{1}{2}I^{\prime}_{\lambda, \infty}(z)z= I_{\lambda, \infty}(z).
	\end{align*}

	Next we construct a path $\sigma$ with
	\begin{align*}
		&\sigma(0)=0,\quad z \in \sigma([0,1]),\quad I_{\lambda, \infty}(\sigma(1))<0, \\& \sigma(t) \neq 0 \quad \text{ for } t\in (0,1],\\& I_{\lambda, \infty}(z)=\max_{t\in [0,1]} I_{\lambda, \infty}(\sigma(t)).
	\end{align*}
	Define
	\begin{align*}
		z_t(\xi)=
		\begin{cases}
			z\left(\delta_{1/t}\,\xi\right) & \text{ if } t>0, \\
			0 & \text{ if } t=0.
		\end{cases}
	\end{align*}
	Pick $t_1 \in (0,1)$, $t_2 \in (1,\infty)$ and $t_3 >t_2$ such that $\sigma$ can be defined in three parts,
	\begin{align*}
		&\sigma_1:[0,1] \to S^2_1(\H),\quad \sigma_1(\ba)=\ba z_{t_1}; \\& \sigma_2:[t_1,t_2] \to S^2_1(\H),\quad \sigma_2(t)=z_{t}; \\& \sigma_3:[1,t_3] \to S^2_1(\H),\quad \sigma_3(\ba)=\ba z_{t_2}.
	\end{align*}
	Since $z$ is a weak solution of $-\Delta_{\mathbb{H}}z=k(z)$ with $k(s)=[f(g(s))-V_\infty g(s)]g^\prime(s)$, we have $\int_\H k(z)z\, d\xi= \int_\H \left|\nah z\right|^2 d\xi>0.$ As a consequence, there exists $t_3>0$ such that
	\begin{equation}{\label{eq:E12}}
		\int_\H k(\ba z)z\, d\xi>0 \quad \text{ for all } \ba \in [1,t_3].
	\end{equation}
	Define $\varphi(t)= \frac{k(t)}{t}$ for $t>0.$ Then, by \eqref{eq:E12},
	\begin{equation}{\label{eq:E13}}
		\int_\H \varphi(\ba z)z^2 d\xi>0 \quad \text{ for all } \ba \in [1,t_3].
	\end{equation}
	Using the scaling relations for $\delta_t$ yields
	\begin{align*}
		\frac{d}{d\ba} I_{\lambda, \infty}(\ba z_t)=\ba t^{Q-2} \left(\int_\H \left|\nah z\right|^2 d\xi- t^2 \int_\H \varphi(\ba z)z^2 d\xi\right).
	\end{align*}
	As a result, there exists $t_1 \in (0,1)$ such that
	\begin{equation}{\label{eq:E14}}
		\|\nah z\|_2^2-t_1^2 \int_\H \varphi(\ba z)z^2 d\xi>0 \quad \text{ for all } \ba \in [0,1].
	\end{equation}
	Using \eqref{eq:E13}, we choose $t_2 >1$ in such a way that
	\begin{equation}{\label{eq:E15}}
		\|\nah z\|_2^2-t_2^2 \int_\H \varphi(\ba z)z^2 d\xi < \frac{2}{1-t_3^2} \|\nah z\|_2^2 \quad \text{ for all } \ba \in [1, t_3].
	\end{equation}
	With the help of \eqref{eq:E14}, the map $\ba \mapsto I_{\lambda, \infty}(\ba z_{t_1})$ increases and attains its maximum at $\ba =1$ along $\sigma_1.$ Let $K(t)=\int_0^t k(s)ds.$ Applying Proposition~\ref{prop:poho} with $V \equiv V_\infty$, we get $\int_\H K(z)d\xi = \frac{Q-2}{2Q}\|\nah z\|_2^2$, whence
	\begin{align*}
		I_{\lambda,\infty}(z_t)= \frac{t^{Q-2}}{2}\|\nah z\|_2^2 - t^{Q}\,\frac{Q-2}{2Q}\|\nah z\|_2^2 \leq \frac{1}{Q}\|\nah z\|_2^2 = I_{\lambda, \infty}(z),
	\end{align*}
	with equality if and only if $t=1$, along $\sigma_2.$

	By \eqref{eq:E15}, $\ba \mapsto I_{\lambda, \infty}(\ba z_{t_2})$ decreases along $\sigma_3$, so
	\begin{align*}
		I_{\lambda, \infty}(\sigma_3(\ba)) \leq I_{\lambda, \infty}(z_{t_2}) \leq I_{\lambda, \infty}(z), \qquad I_{\lambda, \infty}(\sigma_1(\ba)) \leq I_{\lambda, \infty}(z_{t_1}) \leq I_{\lambda, \infty}(z),
	\end{align*}
	and hence $I_{\lambda, \infty}(z)=\max_{t\in [0,t_3]} I_{\lambda, \infty}(\sigma(t)).$ From \eqref{eq:E15}, using $t_2^{Q-2}>1$ and the negativity of the bracket, along $\sigma_3$ we deduce
	\begin{align*}
		I_{\lambda, \infty}(t_3 z_{t_2})&= I_{\lambda, \infty}( z_{t_2}) + \int_1^{t_3} \frac{d}{d\ba} I_{\lambda, \infty}(\ba z_{t_2})d\ba\\& \leq \frac{1}{Q} \|\nah z\|^2+\int_1^{t_3} \frac{2\ba}{1-t_3^2} \|\nah z\|^2d\ba= \left(\frac{1}{Q}-1\right) \|\nah z\|^2 <0.
	\end{align*}
	The path $\sigma$, reparametrised over $[0,1]$, together with $c_\infty$ gives $c_\infty \leq \max_{t \in [0,1]} I_{\lambda, \infty}(\sigma(t))=I_{\lambda, \infty}(z).$

	Since $V \leq V_\infty$ we have $\I \leq I_{\lambda, \infty}$ on $S^2_1(\H)$, so $I_{\lambda,\infty}(\sigma(1))<0$ forces $\I(\sigma(1))<0$ and hence $\sigma \in \Ga_\infty \subset \Ga.$ The function $t \mapsto \I(\sigma(t))$ is continuous on $[0,1]$, so its supremum is attained at some $t^{\prime}$; since $\I(\sigma(0))=0<c$ by \eqref{eq:cpos}, we have $t^{\prime} \in (0,1]$ and therefore $\sigma(t^{\prime}) \neq 0$ by construction of $\sigma$. As $V \not\equiv V_\infty$,
	\begin{align*}
		I_{\lambda, \infty}(\sigma(t^{\prime}))-\I(\sigma(t^{\prime}))=\frac{1}{2}\int_\H \left(V_\infty-V(\xi)\right)g^2(\sigma(t^{\prime}))d\xi>0.
	\end{align*}
	Combining with $c_\infty \leq I_{\lambda, \infty}(z) \leq c$, we obtain
	\begin{align*}
		c \leq \sup_{t \in [0,1]} \I(\sigma(t))=\I(\sigma(t^{\prime}))< I_{\lambda, \infty}(\sigma(t^{\prime})) \leq \max_{t\in [0,1]} I_{\lambda, \infty}(\sigma(t))=I_{\lambda, \infty}(z) \leq c,
	\end{align*}
	which is absurd. Hence $v$ is nontrivial.

	Now suppose that $(V^{\prime}_2)$ holds, that is, $V(\gamma \circ \xi)=V(\xi)$ for every $\gamma \in \Ga_{\mathbb{Z}}$ and every $\xi \in \H$. Let $\F$ be a compact fundamental domain for $\Ga_{\mathbb{Z}}$, and for each $n$ choose $\gamma_n \in \Ga_{\mathbb{Z}}$ with $\gamma_n^{-1}\circ \xi_n \in \F$, where $\{\xi_n\}$ is the sequence of Lemma~\ref{lemma:l8}. Set $\tilde{z}_n(\xi)=v_n(\gamma_n \circ \xi)$. Since $\I$ is invariant under composition with $\tau_{\gamma_n}$, $\{\tilde{z}_n\}$ is again a bounded Cerami sequence for $\I$ at level $c$. Hence there is a subsequence, denoted again by $\{\tilde{z}_n\}$, and $\tilde{z} \in S^2_1(\H)$ with $\tilde{z}_n \rightharpoonup \tilde{z}$ in $S^2_1(\H)$, and $\tilde{z}$ is a weak solution of \eqref{eq:P1}. Moreover $\gamma_n^{-1}\circ\xi_n$ ranges in the compact set $\F$, so all the balls $B_{r_0}(\gamma_n^{-1}\circ \xi_n)$ are contained in a fixed gauge ball $B_{R_0}(0)$, and Lemma~\ref{lemma:l8} together with $\tilde{z}_n \to \tilde{z}$ in $L^2(B_{R_0}(0))$ gives $\int_{B_{R_0}(0)}\left|\tilde{z}\right|^2d\xi \geq \varrho_0$. Hence $\tilde{z}$ is nontrivial. \QED

	\section{Nonexistence}

	In this section we show that the equation has no nontrivial solution once the nonlinearity fails to be subcritical relative to $2\alpha Q^{*}$, under a suitable condition on $V$ with respect to the anisotropic dilations.

	We combine the two identities of Section 3. Write
	\begin{align*}
		E_1 = \int_\H \left|\nah u\right|^2 d\xi, \quad E_2 = \int_\H \left|u\right|^{4\alpha-2}\left|\nah u\right|^2 d\xi, \quad \mathcal{V} = \int_\H V u^2 d\xi, \quad D=\int_\H (ZV)u^2 d\xi.
	\end{align*}

	\begin{prop}{\label{prop:pencil}}
		Assume $(V_1)$ and $(V_3)$, together with $(f_1)$ and $(f_2)$, and let $u=g(v)$ with $v \in S^2_1(\H)$ a weak solution of \eqref{eq:P1}. Then, for every $\ga>0$,
		\begin{equation}{\label{eq:pencil}}
			\int_\H \left[f(u)u - \ga F(u)\right]d\xi = \left(1-\frac{\ga(Q-2)}{2Q}\right)E_1 + \left(4\alpha^2 - \frac{\ga\alpha(Q-2)}{Q}\right)E_2 + \left(1-\frac{\ga}{2}\right)\mathcal{V} - \frac{\ga}{2Q}D.
		\end{equation}
		The coefficient of $E_2$ vanishes if and only if $\ga = 2\alpha Q^{*}$, and for that value
		\begin{equation}{\label{eq:star}}
			\int_\H \left[f(u)u - 2\alpha Q^{*} F(u)\right]d\xi = (1-2\alpha)E_1 + \left(1-\alpha Q^{*}\right)\mathcal{V} - \frac{2\alpha}{Q-2}D.
		\end{equation}
	\end{prop}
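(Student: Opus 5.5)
The identity \eqref{eq:pencil} should follow from taking a linear combination of the two identities of Section 3: the Nehari identity \eqref{eq:nehari} of Lemma~\ref{lemma:nehari}, minus $\ga/Q$ times the Poho\v{z}aev identity \eqref{eq:poho} of Proposition~\ref{prop:poho}. Both are available under the stated hypotheses, since $u=g(v)$ with $v\in S^2_1(\H)$ a weak solution of \eqref{eq:P1}. So the plan is pure bookkeeping once every quantity involved is known to be finite.

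\emph{Step 1: integrability.} I first check that $E_1$, $E_2$, $\mathcal{V}$, $D$, $\int_\H F(u)\,d\xi$ and $\int_\H f(u)u\,d\xi$ are all finite.
\begin{itemize}
\item $\left|\nah v\right|^2=(1+2\alpha|u|^{4\alpha-2})|\nah u|^2$ and $v\in S^2_1(\H)$, so $E_1$ and $E_2$ are finite.
\item $|u|\le|v|$ by $(g_4)$, so $\mathcal{V}<\infty$ as long as $V$ is controlled on the support of the integrand. Under $(V_1)$ and $(V_3)$ alone $V$ need not be bounded. I would rely on the exponential decay of Lemma~\ref{lemma:decay} and on the finiteness already implicit in the statement of Proposition~\ref{prop:poho}, which contains $\int_\H (ZV)u^2\,d\xi$, together with $(ZV)\ge-\kappa V$.
\item $f(u)u$ and $F(u)$ are integrable by $(f_1)$, Lemma~\ref{lemma:linf} and Lemma~\ref{lemma:lg}.
\end{itemize}

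\emph{Step 2: solve Poho\v{z}aev for $\int_\H F(u)$.} Rewriting \eqref{eq:poho} with $(1+2\alpha|u|^{4\alpha-2})|\nah u|^2$ split into $E_1+2\alpha E_2$ gives
\begin{align*}
\int_\H F(u)\,d\xi=\frac{Q-2}{2Q}E_1+\frac{\alpha(Q-2)}{Q}E_2+\frac12\mathcal{V}+\frac{1}{2Q}D .
\end{align*}

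\emph{Step 3: subtract from Nehari.} By \eqref{eq:nehari}, $\int_\H f(u)u\,d\xi=E_1+4\alpha^2E_2+\mathcal{V}$. Subtracting $\ga$ times the display of Step 2 produces \eqref{eq:pencil} coefficient by coefficient.

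\emph{Step 4: the value $\ga=2\alpha Q^{*}$.} The coefficient of $E_2$ is $\alpha\bigl(4\alpha-\ga(Q-2)/Q\bigr)$. Since $\alpha>0$, it vanishes exactly when $\ga=4\alpha Q/(Q-2)=2\alpha Q^{*}$. Substituting this value into the other coefficients:
\begin{itemize}
\item $E_1$: $1-\alpha Q^{*}(Q-2)/Q=1-2\alpha$;
\item $\mathcal{V}$: $1-\alpha Q^{*}$;
\item $D$: $-\alpha Q^{*}/Q=-2\alpha/(Q-2)$.
\end{itemize}
This is \eqref{eq:star}.

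The only nontrivial point is Step 1, namely that $\mathcal{V}$ and $D$ are finite when $V$ is possibly unbounded. I expect to settle this by observing that the same finiteness is used, and implicitly established, in passing to the limit $R\to\infty$ in the proof of Proposition~\ref{prop:poho}, with the exponential decay \eqref{eq:expdecay} controlling the tails. If needed, I would instead carry out the combination at the level of the truncated identities with $\chi_R$ and pass to the limit afterwards.
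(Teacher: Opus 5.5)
Your proposal is correct and matches the paper's proof: solve the Poho\v{z}aev identity \eqref{eq:poho} for $\int_\H F(u)\,d\xi$, subtract $\ga$ times the result from the Nehari identity \eqref{eq:nehari}, and note that the $E_2$ coefficient $\alpha\bigl(4\alpha-\ga(Q-2)/Q\bigr)$ vanishes only at $\ga=2\alpha Q^{*}$, and your coefficient computations all agree with the paper's. The paper has no separate counterpart to your Step 1, because finiteness of $E_1,E_2,\mathcal{V},D$ and of the $F$, $f$ integrals is already part of the content of Proposition~\ref{prop:poho} and Lemma~\ref{lemma:nehari}, so that discussion is harmless but not needed.
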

	\begin{proof}
		Proposition \ref{prop:poho} gives
		\begin{align*}
			\int_\H F(u)d\xi = \frac{1}{Q}\left[\frac{Q-2}{2}E_1 + \alpha(Q-2)E_2 + \frac{Q}{2}\mathcal{V} + \frac{D}{2}\right],
		\end{align*}
		and subtracting $\ga$ times this from Lemma \ref{lemma:nehari} yields \eqref{eq:pencil}. The coefficient of $E_2$ vanishes precisely when
		\begin{align*}
			\ga = \frac{4\alpha Q}{Q-2}= 2\alpha \cdot \frac{2Q}{Q-2} = 2\alpha Q^{*},
		\end{align*}
		and substituting this value gives $1-\frac{\ga(Q-2)}{2Q}=1-2\alpha$, $1-\frac{\ga}{2}=1-\alpha Q^{*}$ and $-\frac{\ga}{2Q}=-\frac{2\alpha}{Q-2}$, which is \eqref{eq:star}. \qed
	\end{proof}

	\begin{rem}
		Proposition \ref{prop:pencil} explains the exponent $2\alpha Q^{*}$: it is the only value of $\ga$ for which the quasilinear energy $E_2$ drops out of the Poho\v{z}aev--Nehari pencil. No control on $E_2$ is therefore needed below, and this is what makes the nonexistence result available at all.
	\end{rem}

	\textbf{Proof of Theorem \ref{thm:nonex} :}
		By $(V_3)$, we have \[D \geq -\kappa \mathcal{V} \text{ and }  \frac{2\alpha\kappa}{Q-2} \leq \alpha Q^{*}-1.\] Then, \eqref{eq:star} yields
		\begin{align*}
			0 \leq \int_\H \left[f(u)u-2\alpha Q^{*}F(u)\right]d\xi
			\leq (1-2\alpha)E_1 + \left(1-\alpha Q^{*}+\frac{2\alpha \kappa}{Q-2}\right)\mathcal{V}
			\leq (1-2\alpha)E_1 \leq 0,
		\end{align*}
		as $\alpha>\frac12$. Hence $E_1=0$ or equivalently, $\nah u = 0$ a.e.. Since
		$\H$ is connected and the fields $X_i, Y_i$ satisfy H\"ormander's condition,
		$u$ is constant, and $u \in L^2(\H)$ forces $u \equiv 0$. \qed

	\begin{co}{\label{cor:lambda}}
		Assume $(V_1)$ and $(V_3)$, and let $2<q<2\alpha Q^{*}$ with $\lambda \leq 0$. Then
		\begin{align*}
			-\Delta_{\mathbb{H}} u + V(\xi)u - \Delta_{\mathbb{H}}\!\left(\left|u\right|^{2\alpha}\right)\left|u\right|^{2\alpha-2}u = \lambda \left|u\right|^{q-2}u + \left|u\right|^{2\alpha Q^{*}-2}u \quad \text{ in } \H
		\end{align*}
		has no nontrivial weak solution $u=g(v)$ with $v \in S^2_1(\H)$.
	\end{co}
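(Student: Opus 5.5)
The plan is to deduce Corollary \ref{cor:lambda} directly from Theorem \ref{thm:nonex}. We apply that theorem with
\begin{align*}
f(s)=\lambda\left|s\right|^{q-2}s+\left|s\right|^{2\alpha Q^{*}-2}s, \qquad F(s)=\frac{\lambda}{q}\left|s\right|^{q}+\frac{1}{2\alpha Q^{*}}\left|s\right|^{2\alpha Q^{*}}.
\end{align*}
So it suffices to check the hypotheses of Theorem \ref{thm:nonex} for this $f$. Clearly $f\in C(\R)$, $f$ is odd and $F$ is its primitive. $(V_1)$ and $(V_3)$ are assumed.

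\emph{Step 1: $(f_1)$.} Since $2<q<2\alpha Q^{*}$, we have $\left|s\right|^{q-1}\le \left|s\right|+\left|s\right|^{2\alpha Q^{*}-1}$. This follows by splitting into $\left|s\right|\le1$ and $\left|s\right|\ge1$. Hence $\left|f(s)\right|\le (\left|\lambda\right|+1)(\left|s\right|+\left|s\right|^{2\alpha Q^{*}-1})$.

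\emph{Step 2: $(f_2)$.} This was already noted after the statement of $(f_1)$, $(f_2)$ in Section 3. We have $f(s)s=\lambda\left|s\right|^{q}+\left|s\right|^{2\alpha Q^{*}}\le\left|s\right|^{2\alpha Q^{*}}$ because $\lambda\le0$. Fix any $\ell\in(0,V_0)$. Then $\left|s\right|^{2\alpha Q^{*}}\le \ell s^2$ for $\left|s\right|\le\eta_0:=\ell^{1/(2\alpha Q^{*}-2)}$.

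\emph{Step 3: the key inequality $f(s)s\ge 2\alpha Q^{*}F(s)$.} The critical parts cancel exactly:
\begin{align*}
f(s)s-2\alpha Q^{*}F(s)=\lambda\left(1-\frac{2\alpha Q^{*}}{q}\right)\left|s\right|^{q}.
\end{align*}
Since $q<2\alpha Q^{*}$, the factor in parentheses is negative. Since $\lambda\le0$, the product is $\ge0$. This is the only place where the sign of $\lambda$ is used, and it is exactly what the statement requires.

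With all hypotheses verified, Theorem \ref{thm:nonex} gives that there is no nontrivial weak solution $u=g(v)$ with $v\in S^2_1(\H)$. The case $\lambda=0$ is included, since then the difference above is identically zero. There is no real obstacle here. The only point needing care is that Theorem \ref{thm:nonex} is stated for general $f$ subject to $(f_1)$ and $(f_2)$. For this reason Step 2 must hold for every $\lambda\le0$, which it does, since a negative $\lambda$ only helps.
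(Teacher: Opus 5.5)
Your proposal is correct and follows the paper's proof exactly. The paper also applies Theorem~\ref{thm:nonex} to $f(s)=\lambda|s|^{q-2}s+|s|^{2\alpha Q^{*}-2}s$, observes that the critical parts cancel so that $f(s)s-2\alpha Q^{*}F(s)=\lambda\bigl(1-\frac{2\alpha Q^{*}}{q}\bigr)|s|^{q}\geq 0$, and notes that $(f_1)$ and $(f_2)$ hold; your explicit checks of $(f_1)$ (splitting at $|s|=1$) and of $(f_2)$ (with $\eta_0=\ell^{1/(2\alpha Q^{*}-2)}$) just fill in details the paper leaves to the reader.
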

	\begin{proof}
		With $f(t)=\lambda\left|t\right|^{q-2}t+\left|t\right|^{2\alpha Q^{*}-2}t$, one has
		\begin{align*}
			f(s)s - 2\alpha Q^{*}F(s)= \lambda\left(1-\frac{2\alpha Q^{*}}{q}\right)\left|s\right|^{q}\geq 0,
		\end{align*}
		as $\lambda \leq 0$ and $q<2\alpha Q^{*}$; the critical term cancels identically. Both $(f_1)$ and $(f_2)$ hold, so Theorem \ref{thm:nonex} applies. \qed
	\end{proof}

	\begin{rem}{\label{rem:super}}
		Corollary \ref{cor:lambda} is stated for $p=2\alpha Q^{*}$ because $(f_1)$ fails for $p>2\alpha Q^{*}$, and with it the regularity theory of Section 3. In the supercritical range, the conclusion of Theorem \ref{thm:nonex} remains valid for solutions satisfying
		\begin{align*}
			\left(1+\left|u\right|^{4\alpha-2}\right)\left|\nah u\right|^2 \in L^1(\H), \quad \left(V+\left|ZV\right|\right)u^2 \in L^1(\H), \quad \left|F(u)\right|+\left|f(u)u\right| \in L^1(\H),
		\end{align*}
		together with $\liminf_{R \to \infty}R\int_{A_R}\left|Tv\right|\left|\nah v\right|d\xi=0$: these are exactly the properties of $u$ used in the proofs of Proposition \ref{prop:poho} and Lemma \ref{lemma:nehari}. The corresponding assumption in the Euclidean case is $u^2\left|\nabla u\right|^2 \in L^1(\R^m)$ \cite{liu2004solutions}.
	\end{rem}

	\begin{rem}{\label{rem:V4}}
		Written out, $(V_3)$ reads
		\begin{align*}
			\sum_{i=1}^{N}\left(x_i V_{x_i}+y_iV_{y_i}\right)+2tV_t \geq -\kappa V,
		\end{align*}
		the weight $2$ on the centre variable reflects the anisotropy of $\delta_\theta$.
		The value $\kappa=0$ gives $ZV \geq 0$, that is, $V$ nondecreasing along each dilation orbit, which is the sub-Riemannian analogue of the assumption $\nabla V(x)\cdot x \geq 0$ imposed in \cite{liu2004solutions}.

		\emph{(a) $(V_1)$ and $(V_2)$ do not imply $(V_3)$.} Let
		\begin{align*}
			\varsigma(\xi)=\varrho(\xi)^{4}=t^2+\left(\left|x\right|^2+\left|y\right|^2\right)^2 ,
		\end{align*}
		which is $\delta_\theta$-homogeneous of degree $4$, so that $Z\varsigma=4\varsigma$ and $Z(W\circ \varsigma)=4\varsigma\,W^{\prime}(\varsigma)$ for every $W \in C^1([0,\infty))$. Take
		\begin{align*}
			W(r)=V_\infty - a\,e^{-b(r-1)^2}, \qquad 0<a<V_\infty, \ b>0,
		\end{align*}
		and put $V=W\circ\varsigma$. Then $V_\infty-a \leq V \leq V_\infty$ and $V(\xi)\to V_\infty$ as $\varrho(\xi)\to\infty$, so $(V_1)$ and $(V_2)$ hold. On the other hand $W^{\prime}(r)=2ab(r-1)e^{-b(r-1)^2}$, so at $r_b=1-b^{-1/2}$ we have
		\begin{align*}
			4r_b W^{\prime}(r_b)+\kappa W(r_b) \leq -8a\,e^{-1}\sqrt{b}\left(1-b^{-1/2}\right)+\kappa V_\infty ,
		\end{align*}
		which is negative for $b$ large. Hence $(V_3)$ fails for every admissible $\kappa$.

		\emph{(b) The conditions are nevertheless simultaneously satisfiable.} For
		\begin{align*}
			W(r)=V_\infty - \frac{a}{1+r}, \qquad 0<a<V_\infty,
		\end{align*}
		one has $ZV=4a\varsigma(1+\varsigma)^{-2}\geq 0$, so $(V_3)$ holds with $\kappa=0$, together with $V\geq V_\infty-a>0$, $V \leq V_\infty$, $V\not\equiv V_\infty$ and $V(\xi)\to V_\infty$ as $\varrho(\xi)\to\infty$. Note that $(V_3)$ constrains $V$ only along the dilation orbits and places no restriction on its behaviour over a gauge sphere; potentials of the form $W\circ \varsigma$ are examples, not the general case.
		\emph{(c) The periodic case is excluded.} Let $V \in C^1(\H)$ be
		$\Ga_{\mathbb{Z}}$-invariant and nonconstant. If $V_t\not\equiv 0$, pick
		$\xi_0=(\zeta_0,t_0)$ with $V_t(\xi_0)<0$; invariance under $(0,0,k)$ gives
		$ZV(\zeta_0,t_0+k)=\zeta_0\cdot\nabla_\zeta V(\xi_0)+2(t_0+k)V_t(\xi_0)\to-\infty$
		as $k \to \infty$. If $V_t\equiv0$, then $V=V(\zeta)$ with $\nabla_\zeta V$ periodic;
		choosing $\zeta_0$ with $\nabla_\zeta V(\zeta_0)\neq0$ and a lattice vector $w$ with
		$w\cdot \nabla_\zeta V(\zeta_0)\neq 0$, we get
		$ZV(\zeta_0+kw)=\zeta_0\cdot\nabla_\zeta V(\zeta_0)+k\,w\cdot\nabla_\zeta V(\zeta_0)$,
		which tends to $-\infty$ along a suitable sequence $k \to \pm\infty$. In either case $ZV$
		is unbounded below while $V$ is bounded, so no admissible $\kappa$ makes $(V_3)$ hold.
		Corollary~\ref{cor:lambda} therefore does not overlap the case $(V_2^\prime)$, and this is
		a feature of the dilation structure rather than a defect of the proof: the generator $Z$
		has coefficients growing linearly in $\varrho$, against which a
		periodic gradient cannot hold a sign.
	\end{rem}

	\noindent \textbf{Funding} \\
	The authors declare that no funds, grants, or other support were received during the preparation of this manuscript.

	\noindent \textbf{Competing Interests} \\
	The authors declare that there is no conflict of interest.
	\bibliographystyle{siam}
	\bibliography{ref}

@article{wu2014multiple,
  title={Multiple solutions for quasilinear {S}chr{\"o}dinger equations with a parameter},
  author={Wu, Xian},
  journal={Journal of Differential Equations},
  volume={256},
  number={7},
  pages={2619--2632},
  year={2014},
  publisher={Elsevier}
}

@book{schechter2012linking,
  title={Linking methods in critical point theory},
  author={Schechter, Martin},
  year={2012},
  publisher={Springer Science \& Business Media}
}

@article{brezis1983positive,
  title={Positive solutions of nonlinear elliptic equations involving critical {S}obolev exponents},
  author={Br{\'e}zis, Ha{\"\i}m and Nirenberg, Louis},
  journal={Communications on pure and applied mathematics},
  volume={36},
  number={4},
  pages={437--477},
  year={1983},
  publisher={Wiley Subscription Services, Inc., A Wiley Company, New York}
}

@article{citti1995semilinear,
  title={Semilinear {D}irichlet problem involving critical exponent for the {K}ohn {L}aplacian},
  author={Citti, G},
  journal={Annali di Matematica Pura ed Applicata},
  volume={169},
  number={1},
  pages={375--392},
  year={1995},
  publisher={Springer}
}

@book{heisenberg2013physical,
  title={The physical principles of the quantum theory},
  author={Heisenberg, Werner},
  year={2013},
  publisher={Courier Corporation}
}

@article{hormander1967hypoelliptic,
  title   = {Hypoelliptic second order differential equations},
  author  = {H{\"o}rmander, Lars},
  journal = {Acta Mathematica},
  volume  = {119},
  pages   = {147--171},
  year    = {1967},
  publisher = {Springer}
}

@article{folland1975subelliptic,
  title={Subelliptic estimates and function spaces on nilpotent {L}ie groups},
  author={Folland, Gerald B},
  journal={Arkiv f{\"o}r Matematik},
  volume={13},
  number={1},
  pages={161--207},
  year={1975},
  publisher={Kluwer Academic Publishers Dordrecht}
}

@article{liu2003soliton,
  title={Soliton solutions for quasilinear {S}chr{\"o}dinger equations, I},
  author={Liu, Jiaquan and Wang, Zhi-Qiang},
  journal={Proceedings of the American Mathematical Society},
  pages={441--448},
  year={2003},
  publisher={JSTOR}
}

@article{adachi2012uniqueness,
  title={Uniqueness of the ground state solutions of quasilinear {S}chr{\"o}dinger equations},
  author={Adachi, Shinji and Watanabe, Tatsuya},
  journal={Nonlinear Analysis: Theory, Methods \& Applications},
  volume={75},
  number={2},
  pages={819--833},
  year={2012},
  publisher={Elsevier}
}

@article{poppenberg2002existence,
  title={On the existence of soliton solutions to quasilinear {S}chr{\"o}dinger equations},
  author={Poppenberg, Markus and Schmitt, Klaus and Wang, Zhi-Qiang},
  journal={Calculus of Variations and Partial Differential Equations},
  volume={14},
  number={3},
  pages={329--344},
  year={2002},
  publisher={Springer}
}

@article{colin2004solutions,
  title={Solutions for a quasilinear {S}chr{\"o}dinger equation: a dual approach},
  author={Colin, Mathieu and Jeanjean, Louis},
  journal={Nonlinear Analysis: Theory, Methods \& Applications},
  volume={56},
  number={2},
  pages={213--226},
  year={2004},
  publisher={Elsevier}
}

@misc {etde_5937557,
title = {Nonlinear scalar field equations. Pt. 1},
author = {Berestycki, H. and Lions, P.L.}, 
abstractNote = {This paper, as well as a subsequent one, is concerned with the existence of nontrivial solutions for some semi-linear elliptic equations in Rsup(N). Such problems are motivated in particular by the search for certain kinds of solitary waves (stationary states) in nonlinear equations of the Klein-Gordon or {S}chrodinger type.},
journal = {Archive for Rational Mechanics and Analysis},
volume = {82:4},
place = {Germany},
year = {1983},
month = {Jan}
}

@article{berestycki1983equations,
  title={{\'E}quations de champs scalaires {E}uclidiens non lin{\'e}aires dans le plan (French)[Nonlinear {E}uclidean scalar field equations in the plane]},
  author={Berestycki, Henri},
  journal={Comptes Rendus de l'Acad{\'e}mie des Sciences, S{\'e}rie I - Math{\'e}matique},
  volume={297},
  pages={307},
  year={1983}
}

@article{liu2004solutions,
  title={Solutions for quasilinear {S}chr{\"o}dinger equations via the {N}ehari method},
  author={Liu, Jia-quan and Wang, Ya-qi and Wang, Zhi-Qiang},
  journal={Communications in Partial Differential Equations},
  volume={29},
  number={5-6},
  pages={879--901},
  year={2004},
  publisher={Taylor \& Francis}
}

@article{pucci1986general,
  title={A general variational identity},
  author={Pucci, Patrizia and Serrin, James},
  journal={Indiana University Mathematics Journal},
  volume={35},
  number={3},
  pages={681--703},
  year={1986},
  publisher={JSTOR}
}

@article{miyagaki2010soliton,
  title={Soliton solutions for quasilinear {S}chr{\"o}dinger equations with critical growth},
  author={Miyagaki, Ol{\'\i}mpio H and Soares, S{\'e}rgio HM and others},
  journal={Journal of Differential Equations},
  volume={248},
  number={4},
  pages={722--744},
  year={2010},
  publisher={Elsevier}
}

@inproceedings{lions1984concentration,
  title={The concentration-compactness principle in the calculus of variations. The locally compact case, part 2 },
  author={Lions, Pierre-Louis},
  booktitle={Annales de l'Institut Henri Poincar{\'e} C, Analyse non lin{\'e}aire},
  volume={1},
  pages={223--283},
  year={1984},
  organization={Elsevier}
}

@article{alves2007soliton,
  title={Soliton solutions to a class of quasilinear elliptic equations on $\mathbb{R}$ },
  author={Alves, MJ and Carriao, PC and Miyagaki, OH},
  journal={Advanced Nonlinear Studies},
  volume={7},
  number={4},
  pages={579--597},
  year={2007},
  publisher={Advanced Nonlinear Studies, Inc.}
}

@article{chen2015multiple,
  title={Multiple solutions for a class of quasilinear {S}chr{\"o}dinger equations in $\mathbb{R}^{N}$ },
  author={Chen, Caisheng},
  journal={Journal of Mathematical Physics},
  volume={56},
  number={7},
  year={2015},
  publisher={AIP Publishing}
}

@article{kurihara1981large,
  title={Large-amplitude quasi-solitons in superfluid films},
  author={Kurihara, Susumu},
  journal={Journal of the Physical Society of Japan},
  volume={50},
  number={10},
  pages={3262--3267},
  year={1981},
  publisher={The Physical Society of Japan}
}

@article{medeiros2008nonhomogeneous,
  title={A nonhomogeneous elliptic problem involving critical growth in dimension two},
  author={Medeiros, Everaldo and Severo, Uberlandio and others},
  journal={Journal of Mathematical Analysis and Applications},
  volume={345},
  number={1},
  pages={286--304},
  year={2008},
  publisher={Elsevier}
}

@article{giacomoni2005multiplicity,
  title={A multiplicity result to a nonhomogeneous elliptic equation in whole space $\mathbb{R}^2$},
  author={Giacomoni, J and Sreenadh, K},
  journal={Advances in Mathematical Sciences and Applications},
  volume={15},
  number={2},
  pages={467},
  year={2005},
  publisher={MARUZEN CO. LTD}
}

@article{lam2012existence,
  title={Existence and multiplicity of solutions to equations of ${N}$-{L}aplacian type with critical exponential growth in $\mathbb{R}^{N}$},
  author={Lam, Nguyen and Lu, Guozhen},
  journal={Journal of Functional Analysis},
  volume={262},
  number={3},
  pages={1132--1165},
  year={2012},
  publisher={Elsevier}
}

@article{floer1986nonspreading,
  title={Nonspreading wave packets for the cubic {S}chr{\"o}dinger equation with a bounded potential},
  author={Floer, Andreas and Weinstein, Alan},
  journal={Journal of Functional Analysis},
  volume={69},
  number={3},
  pages={397--408},
  year={1986},
  publisher={Elsevier}
}

@article{rabinowitz1992class,
  title={On a class of nonlinear {S}chr{\"o}dinger equations},
  author={Rabinowitz, Paul H},
  journal={Zeitschrift Angewandte Mathematik und Physik},
  volume={43},
  number={2},
  pages={270--291},
  year={1992}
}

@article{strauss1977existence,
  title={Existence of solitary waves in higher dimensions},
  author={Strauss, Walter A},
  journal={Communications in Mathematical Physics},
  volume={55},
  number={2},
  pages={149--162},
  year={1977},
  publisher={Springer}
}

@article{miyagaki2007soliton,
  title={Soliton solutions for quasilinear {S}chr{\"o}dinger equations: the critical exponential case},
  author={Miyagaki, Ol{\'\i}mpio H and Soares, S{\'e}rgio HM and others},
  journal={Nonlinear Analysis: Theory, Methods \& Applications},
  volume={67},
  number={12},
  pages={3357--3372},
  year={2007},
  publisher={Elsevier}
}

@inproceedings{garofalo1990frequency,
  title={Frequency functions on the {H}eisenberg group, the uncertainty principle and unique continuation},
  author={Garofalo, Nicola and Lanconelli, Ermanno},
  booktitle={Annales de l'institut Fourier},
  volume={40},
  pages={313--356},
  year={1990}
}

@book{ivanov2011extremals,
  title={Extremals for the {S}obolev inequality and the quaternionic contact {Y}amabe problem},
  author={Ivanov, Stefan P and Vassilev, Dimiter N},
  year={2011},
  publisher={World Scientific}
}

@article{willem1996minimax,
  title={Minimax theorems},
  author={Willem, M},
  journal={Progress in Nonlinear Differential Equations and their Applications},
  volume={24},
  year={1996}
}

@article{moameni2007class,
  title={On a class of periodic quasilinear {S}chr{\"o}dinger equations involving critical growth in $\mathbb{R}^2$},
  author={Moameni, Abbas},
  journal={Journal of Mathematical Analysis and Applications},
  volume={334},
  number={2},
  pages={775--786},
  year={2007},
  publisher={Elsevier}
}

@article{do2009semi,
  title={Semi-classical states for quasilinear {S}chr{\"o}dinger equations arising in plasma physics},
  author={do O, Joao Marcos and Moameni, Abbas and Severo, Uberlandio},
  journal={Communications in Contemporary Mathematics},
  volume={11},
  number={04},
  pages={547--583},
  year={2009},
  publisher={World Scientific}
}

@article{folland1974estimates,
  title={Estimates for the article empty $\bar{\pa_b}$ complex and analysis on the {H}eisenberg group},
  author={Folland, Gerald B and Stein, Elias M},
  journal={Communications on Pure and Applied Mathematics},
  volume={27},
  number={4},
  pages={429--522},
  year={1974},
  publisher={Wiley Online Library}
}

@article {folland1973fundamental,
    AUTHOR = {Folland, G. B.},
     TITLE = {A fundamental solution for a subelliptic operator},
  JOURNAL = {Bulletin of the American Mathematical Society},
    VOLUME = {79},
      YEAR = {1973},
     PAGES = {373--376},
}

@article{li2015positive,
  title={Positive solution for quasilinear {S}chr{\"o}dinger equations with a parameter},
  author={Li, GB},
  journal={Commun. Pure Appl. Anal},
  volume={14},
  number={5},
  pages={1803--1816},
  year={2015}
}

@article{garofalo1992existence,
  author  = {Garofalo, Nicola and Lanconelli, Ermanno},
  title   = {Existence and nonexistence results for semilinear equations on the {H}eisenberg group},
  journal = {Indiana University Mathematics Journal},
  volume  = {41},
  number  = {1},
  pages   = {71--98},
  year    = {1992}
}

@article{lanconelli2000nonexistence,
  author  = {Lanconelli, Ermanno and Uguzzoni, Francesco},
  title   = {Non-existence results for semilinear {K}ohn--{L}aplace equations in unbounded domains},
  journal = {Communications in Partial Differential Equations},
  volume  = {25},
  number  = {9-10},
  pages   = {1703--1739},
  year    = {2000}
}

@article{loiudice2007semilinear,
  author  = {Loiudice, Annunziata},
  title   = {Semilinear subelliptic problems with critical growth on {C}arnot groups},
  journal = {Manuscripta Mathematica},
  volume  = {124},
  number  = {2},
  pages   = {247--259},
  year    = {2007}
}

@article{shen2013soliton,
  title={Soliton solutions for generalized quasilinear {S}chr{\"o}dinger equations},
  author={Shen, Yaotian and Wang, Youjun},
  journal={Nonlinear Analysis: Theory, Methods \& Applications},
  volume={80},
  pages={194--201},
  year={2013},
  publisher={Elsevier}
}
\end{document}